\numberwithin{equation}{section} \oddsidemargin=-.0cm
\numberwithin{equation}{section} \oddsidemargin=-.0cm
\def\eps{\varepsilon}
\def \and{\quad\text{and}\quad}
\def\epsilon{\varepsilon}
\newcommand{\be}{\begin{equation}}
\newcommand{\ee}{\end{equation}}
\newcommand{\bes}{\begin{equation*}}
\newcommand{\ees}{\end{equation*}}
\newcommand{\baa}{\begin{array}}
\newcommand{\eaa}{\end{array}}
\newcommand{\ba}{\begin{eqnarray}}
\newcommand{\ea}{\end{eqnarray}}
\newcommand{\bea}{\begin{equation} \begin{aligned}}
\newcommand{\eea}{\end{aligned} \end{equation}}
\newcommand{\mk}{\color{black}}
\numberwithin{equation}{section}
\definecolor{rred}{rgb}{0.7,0,0.1}
\definecolor{greenrb}{rgb}{0.2,0.6,0.2}
\newcommand{\mkrevb}{\color{black}}
\newcommand{\mkr}{\color{black}}
\newcommand{\mkrev}{\color{black}}
\newtheorem{Lem}{Lemma}[section]
\newtheorem{Cor}{Corollary}[section]
\newtheorem{Ex}{Example}[section]
\newtheorem{Def}{Definition}[section]
\newtheorem{Rmk}{Remark}[section]
\definecolor{rred}{rgb}{0.7,0,0.1}
\newcommand{\mkk}{\color{black}}
\def \au {\rm}
\def \bk {\it}
\def\c{\mathfrak{c}}
\def\s{\mathfrak{s}}
\def \no#1#2#3 {{\bf #1} (#3), #2.}
\def \eds#1#2#3 {#1, #2, #3.}
\newtheorem{proposition}{Proposition}[section]
\newtheorem{theorem}[proposition]{Theorem}
\newtheorem{lemma}[proposition]{Lemma}
\theoremstyle{definition}
\numberwithin{equation}{section}
\title[Topological instabilities in families  of semilinear parabolic problems] {Topological instabilities in families  of semilinear parabolic problems subject to nonlinear perturbations}
\author[Micka\"el D. Chekroun]{Micka\"el D. Chekroun}
\address[MDC]{Department of Atmospheric \& Oceanic Sciences, University of California, Los Angeles, CA 90095-1565, USA} 
\email{mchekroun@atmos.ucla.edu}
\date{}
\thanks{}
\subjclass[2010]{35J61, 35B30, 35B32, 35B20, 35K58, 35A16, 37K50, 37C20, 37H20, 37J20, 47H11}
\keywords{Semilinear elliptic and parabolic problems, nonlinear eigenvalue problems, Leray-Schauder
degree, $S$-shaped bifurcation, structural stability,  topological instability, perturbed bifurcation theory}
\begin{document}
\maketitle

%

\begin{abstract}


In this article it is proved that the dynamical properties of a broad  class
of semilinear parabolic problems are sensitive to arbitrarily
small but smooth perturbations of the nonlinear term, when the spatial dimension is either equal to one or two. 
This topological instability is shown to result from a local deformation of  the global bifurcation diagram  associated with the corresponding elliptic problems. Such a deformation is shown to systematically occur via the creation of either a multiple-point or a new fold-point on  this diagram when an appropriate small perturbation is applied to the nonlinear term. 

More precisely, it is shown that for a broad class of nonlinear elliptic problems, one can always find an arbitrary small perturbation of the nonlinear term, that generates  (for instance) a local S on the bifurcation diagram whereas the latter is e.g.~monotone when no perturbation is applied;  substituting thus a single solution by several ones. Such an increase in the local multiplicity of the solutions to the elliptic problem results then into a topological instability for the corresponding parabolic problem.

The rigorous proof of the latter instability result requires though to revisit the classical concept of topological equivalence to encompass important cases for the applications such as semi-linear parabolic problems  for which the semigroup may exhibit non-global dissipative properties, allowing for the coexistence of blow-up regions and local attractors in the phase space; cases that arise e.g.~in combustion theory.

A revised framework of topological robustness is thus introduced in that respect within which 
the main topological instability result is then proved for continuous, locally Lipschitz but not necessarily $C^1$ nonlinear terms, that prevent in particular  the use of linearization techniques,  and for which  the family of semigroups may exhibit non-dissipative properties.

\end{abstract}

\tableofcontents

\section{Introduction}
The bifurcations occurring in semilinear or elliptic parabolic problems have been thoroughly
investigated since the pioneering  works of
\cite{Amann,rab1,Crandall_Rabinowitz'73,marsden1976hopf, Hen81,Sattinger_book,sattinger1980bifurcation}, among others. A large portion of the subsequent works  has been devoted to the study of qualitative changes occurring within
a fixed family of such problems when a bifurcation parameter is varied; see e.g.~\cite{MW05,MW14,HI11,kielhofer2012} and references therein. 

Complementarily,  {\it perturbed bifurcation problems}  arising in families of semilinear elliptic equations, have been considered. These problems, in their general formulation, are concerned with the dependence of the global bifurcation diagram to  perturbations of the nonlinear term \cite{keener1973perturbed}. Such a dependence problem is of fundamental importance 
to understand, for instance, how the multiplicity of solutions of such equations varies as the nonlinearity is subject to small disturbances, or is modified due to model imperfections \cite{brezzi1982numerical,golubitsky1979theory,keener1973perturbed}.  

However, this problem has been mainly addressed in the context of two-parameter families  of elliptic problems; see e.g.~\cite{BCT88,BCT88_b,brezzi1980,brezzi1982numerical,damil1990new,Chow_al75,Du, Du_Lou, keener1973perturbed,Korman_Li,Lions,moore1980,She80, spence1982non}. In comparison, the dependence of the global bifurcation diagram with respect to variations in other degrees of freedom such as the ``shape" of the  nonlinearity remains largely unexplored;  
see however \cite{Dancer_domain_effect,Dancer_domain_effect2, Hen05,Nagasaki} for {\mkrevb a study of effects related to the domain's variation.}

As we will see, the study of {\it perturbed} bifurcation problems of semilinear elliptic equations can be naturally related to the study of a notion of  
 {\it topological robustness} of dynamical properties associated with the corresponding  families of semilinear parabolic equations, once the appropriate framework has been set up. The issue is here not only to translate the deformations of the global bifurcation diagram of the elliptic problems  into a dynamical language for the parabolic problems, but also to take into consideration the possible discrepancies  of regularity that may arise between  the weak solutions of the former and the semigroup equilibria of the latter.   
 
  It is the purpose of this article to introduce such a framework that  allows  us in particular, to analyze from a topological viewpoint, the perturbation effects  of the nonlinear term on the  parameterized families of  semigroups associated with semilinear parabolic problems  of the form
 \bea\label{Eq_parab_intro}
\partial_t u-\Delta u & =\lambda g(u), \; \mbox{ in } \Omega,\\
  u & =0, \quad\quad \mbox{ on } \partial\Omega,\;
\eea
given on a bounded and sufficiently smooth domain $\Omega \subset \mathbb{R}^d$. Our approach allows us to include both dissipative\footnote{In the sense that the associated semigroup exhibits a bounded absorbing set; see \cite{Temam3}.} {\mkrevb as well non-dissipative cases with finitely  many local attractors}; the latter cases being commonly encountered when $g$ is {\it superlinear} such as in gas combustion theory \cite{Bebernes_Eberly,F,Gelfand_63,quittner2007superlinear} or in plasma physics \cite{Brezis_Beres,ch,Tem75}, see also \cite{Fil05}.

Within this framework, it is then proved that the dynamical properties of a broad  class
of semilinear parabolic problems turns out to be sensitive to arbitrarily
small perturbations of the nonlinear term, {\mkk when the spatial dimension $d$ is either equal to one or two.}

This is essentially  the content of Theorem \ref{THM_Main} proved below and which constitutes the main result of this article.  
{\mkrevb The proof of this theorem is articulated around a combination of  techniques relative to (i) the {\it generation of discontinuities} in the minimal branch obtained from the perturbative approach of  \cite{Nao}; (ii) 
the {\it growth property} of the branch of minimal solutions (see Proposition \ref{Prop_classical-results} below);  and (iii) a general {\it continuation result} from the Leray-Schauder degree theory, formulated as Theorem \ref{THM_global_unbounded} below.  
The latter theorem provides conditions of existence of an unbounded continuum of steady states for the corresponding family of semilinear elliptic problems.\footnote{Considered in 
$(0,\infty)\times E$, where $E$ is a Banach space for which the nonlinear elliptic problem $-\Delta u=\lambda g(u)$,  $u\vert_{\partial \Omega}=0$, is well-posed, for $\lambda \in \Lambda \subset (0,\infty)$.}}

The proof of Theorem \ref{THM_Main} provides furthermore the mechanism at the origin of  the aforementioned topological instability of the parameterized family of ``phase portraits'' associated with \eqref{Eq_parab_intro}. More precisely, {\mkr it is shown that such a topological instability  comes} from a local deformation of  the $\lambda$-bifurcation diagram  associated with the corresponding elliptic problems.

This deformation is the consequence of the {\mkrevb creation of either a multiple-point or a new fold-point on this diagram} when an appropriate small perturbation is applied to the nonlinear term. This topological signature is proved  for locally Lipschitz but not necessarily $C^1$ nonlinear terms, that prevent in particular the use of 
linearization techniques.   {\mkrevb Furthermore, as will be explained, the results apply to family of semigroups associated with \eqref{Eq_parab_intro}  that may exhibit non-global dissipative properties with coexistence of blow-up regions and finitely many local attractors.}

{\mkrevb Throughout this article, we have tried to make the expository as much self-contained as possible. In that respect, a very brief introduction to the standard notion of  structural stability for dissipative semilinear parabolic equations is provided in Section \ref{Sec_struc_stab}, preceded by a short presentation of the perturbed Gelfand problem in Section \ref{Sec_perturbed-Gelfand} to motivate, in part, the type of problems considered hereafter.  The core of this article is then articulated around Section \ref{Sec_top_rob} and Section \ref{Sec_Main}.  

Section \ref{Sec_top_rob} introduces an abstract framework for the description of topological equivalence between families of semilinear parabolic equations  which may exhibit for instance a mixture of trajectories that blow up or are attracted by equilibria, depending on the ``energy'' contained in the initial data.  In particular, this framework allows us to take into account the possible discrepancies  of regularity that may arise between  the weak solutions of the corresponding elliptic problems and the semigroup equilibria. Section \ref{Sec_Main} presents then the main abstract result  of this article (Theorem \ref{THM_Main}) that is applied on the  perturbed Gelfand problem of Section \ref{Sec_perturbed-Gelfand} as an illustration (Corollary \ref{Main_cor}). Numerical results are then provided in Section \ref{Sec_num}. Finally, Appendix \ref{Sec_AppendixA}  provides a  proof of the continuation result (Theorem \ref{THM_global_unbounded}) used in the proof of Theorem \ref{THM_Main}. }

\section{A revised framework for the topological robustness of families of  semilinear parabolic equations}\label{Sec_topo_rob_mother}

In Section \ref{Sec_perturbed-Gelfand} that follows, the  perturbed Gelfand problem serves as an illustration of {\it perturbed bifurcation problems} arising in families of semilinear elliptic equations. These problems are concerned with  
the dependence of the global bifurcation diagram
to perturbations of the nonlinear term \cite{keener1973perturbed}. As mentioned in Introduction,  such a dependence problem is of fundamental importance 
to understand, for instance, how the multiplicity of solutions of such equations varies as the nonlinearity is subject to small disturbances, or is modified due to model imperfections \cite{brezzi1982numerical,golubitsky1979theory,keener1973perturbed}.

 We will illustrate in Section \ref{Sec_top_rob} below, how  perturbed bifurcation problems can be naturally related to  the study of a certain notion of topological robustness of  
  the corresponding  families of semilinear parabolic equations. Although related to the more standard notion of structural stability encountered for dissipative semilinear parabolic problems \cite{Hale_al'84} (see Section \ref{Sec_struc_stab} below), our  notion of topological robustness is more flexible. As discussed hereafter, our approach,  based on the notion of topological equivalence between parameterized families of semigroups  such as introduced in ~Definition \ref{DEF_Equiv_Rel-family} below (see Section \ref{Sec_top_rob}), adopts indeed a more global viewpoint and allows us  to deal with semigroups not necessarily restricted to an invariant set and associated with parabolic problems in which  a mixed behavior can occur.\footnote{Such  semigroups are typically defined on the set of bounded trajectories, disregarding  the trajectories that undergo a finite-time blow-up or that are defined for all time but are not bounded, the so-called grow-up solutions (see e.g.~\cite{Ben_Gal}).} Furthermore, our approach allows us to take into account  the possible  discrepancies of regularity that may arise between  the (weak) solutions of elliptic problems, on the one hand, and the semigroup equilibria  of the corresponding parabolic problems, on the other.

\subsection{The perturbed Gelfand problem as a motivation}\label{Sec_perturbed-Gelfand}

Given a smooth bounded domain $\Omega\subset\mathbb{R}^{d}$, the
perturbed Gelfand problem,  consists of solving the following nonlinear eigenvalue problem
\begin{equation}\label{Eq_Gelfand-perturb}
\left\{
\begin{array}{l}
\displaystyle -\Delta u = \lambda \exp\Big(\frac{u}{1+\epsilon u}\Big), \;\textrm{ in }\Omega,\\
\hspace{1.8em} u =0,  \qquad \qquad \qquad \quad  \textrm{ on }\partial\Omega,
\end{array}\right.
\end{equation}
of unknowns $\lambda\geq 0$  and $u$ in some functional space. We refer to   \cite{Bebernes_Eberly, ch, F, Gelfand_63,Joseph_Lund, Taira_book, Taira,quittner2007superlinear} for more details regarding the physical contexts where such a problem arises.

{\mkrevb We first recall some general features regarding  the structure of the $\lambda$-parameterized solution set  of \eqref{Eq_Gelfand-perturb}.  These features  can be derived by 
application of topological degree arguments (see Theorem \ref{THM_global_unbounded}) and the theory of semilinear  elliptic equations \cite{Nao_book}.
In the same time, we point out some open questions related to the exact shape of this solution set when the nonlinearity is varied by changing  $\epsilon$.

The goal is here to illustrate on this example the difficulty of characterizing the qualitative changes occurring in the  $\lambda$-bifurcation diagram, when a perturbation, monitored here by $\epsilon$, is applied to the nonlinearity.  As shown in Section \ref{Sec_Main} below, Theorem \ref{THM_Main} allows for a clarification of such qualitative changes for a broad class of nonlinearities subject to arbitrarily small perturbations with compact support.\footnote{Although the allowable perturbations by Theorem \ref{THM_Main} do not include those associated with a variation of $\epsilon$ on this particular example, sensitivity results can still be derived for \eqref{Eq_Gelfand-perturb} by application of Theorem \ref{THM_Main}; see Corollary \ref{Main_cor} below. We refer also to Section \ref{Sec_num} for numerical results when \eqref{Eq_Gelfand-perturb} is subject to perturbations not compactly supported.} }

Let $\alpha \in(0,1)$ and let us consider the H\"older spaces
$V=C^{2,\alpha}(\overline{\Omega})$ and
$E=C^{0,\alpha}(\overline{\Omega}).$ It is well known (see {\it
e.g.} ~\cite[Chapter 6]{Gilbarg_Trudinger}) that given $f\in E$ and $\lambda \geq 0$,
there exists a unique $u\in V$ of the following Poisson problem,
\begin{equation}\label{Es-sol-map}
\left\{
\begin{array}{l}
\displaystyle -\Delta u = \lambda \exp\Big(\frac{f}{1+\epsilon f}\Big), \textrm{ in }\Omega,\\
\hspace{1.8em} u=0,  \qquad \qquad \qquad \quad  \textrm{ on }\partial\Omega.
\end{array}\right.
\end{equation}

One can thus define a solution map $S:E\rightarrow V$ given by $S(f)=u,$
where $u\in V$ is the unique solution to \eqref{Es-sol-map}. By
composing $S$ with the compact embedding $i:V\rightarrow E$
\cite{Gilbarg_Trudinger} we obtain then a map $\widetilde{S}:=i\circ S:E
\rightarrow E$ which is completely continuous.

Define now $G : \mathbb{R}^+\times E \rightarrow E$ by
$G(\lambda,u)=\lambda \widetilde{S}(u),$ and consider the equation,
\begin{equation}\label{Eq-Eq-bif-Gelfand}
\mathcal{G}(\lambda,u):=u-G(\lambda,u)=0_E.
\end{equation}
The mapping $\mathcal{G}$ is a completely continuous perturbation of the
identity and solutions of the equation $\mathcal{G}(\lambda,u)=0$ correspond
to solutions of $\eqref{Eq_Gelfand-perturb}.$ For any neighborhood
$\mathcal{U} \subset X$ of $0_E$, the function $u=0$ is the unique
solution to \eqref{Eq-Eq-bif-Gelfand} with $\lambda=0$. Moreover,
$$ \deg(\mathcal{G}(0,\cdot),\mathcal{U},0_E)=\deg(I, \mathcal{U}, 0_E)=1,$$
and therefore from Theorem \ref{THM_global_unbounded} (see Appendix \ref{Sec_AppendixA}), there exists a
global curve of nontrivial solutions which emanates from $(0,0_E).$ Here
$\deg(\mathcal{G}(0,\cdot),\mathcal{U},0_E)$ stands for the classical
Leray-Schauder degree of $\mathcal{G}(0,\cdot)$ with respect to $\mathcal{U}$ and $0_E$; see e.g. \cite{Deimling,nir}.
From the maximum principle these solutions are positive in $\Omega$.
Since $u=0$ is the unique solution for $\lambda=0$ (up to a
multiplicative constant), the corresponding continuum of solutions is unbounded in
$(0,\infty)\times E$ according to Theorem \ref{THM_global_unbounded}.

From e.g.~\cite[Theorem 2.3]{Lions}, it is known that there exists a
minimal positive solution of \eqref{Eq_Gelfand-perturb} for all
$\lambda>0$; cf. ~also Proposition \ref{Prop_classical-results} below. Furthermore, there exists $\lambda^{\sharp}$
such that for {\mkrevb every $\lambda\geq \lambda^{\sharp}$, 
only one positive solution, $u_{\lambda}$,  of \eqref{Eq_Gelfand-perturb}, exists
 (cf. ~\cite{Castro_Shivaji}). The branch
$\lambda \mapsto u_{\lambda}$ is furthermore increasing on $[\lambda^{\sharp},\infty)$; see \cite{Amann} and
see Proposition \ref{Prop_classical-results} below.}

For $\lambda$ small enough, i.e.~when $0<\lambda\leq
\lambda_{\sharp}$ for some $\lambda_{\sharp}>0$, the same conclusions about the uniqueness of positive
solutions as well as about the monotony of the corresponding branch, are satisfied. The
problem is then to know what happens for $\lambda\in
(\lambda_{\sharp},\lambda^{\sharp})$.  {\mkrevb The aforementioned topological degree arguments 
may give some clues in that respect.} {\mkrevb For instance, since Theorem \ref{THM_global_unbounded} ensures that the solution set forms a continuum, then necessarily this continuum is $S$-like shaped\footnote{with possibly several  turning points not necessarily reduced to two.} in case of existence of three solutions for some $\lambda_{\s}\in(\lambda_{\sharp},\lambda^{\sharp})$.}

The determination of the exact shape of this continuum, for general domains,  is however
a challenging problem.
For instance it is known that for $\epsilon \geq 1/4$, the problem
\eqref{Eq_Gelfand-perturb} has in any dimension a {\it unique} {\mkrevb positive solution for every
$\lambda >0$ forming a monotone branch of
solutions as a function  of  $\lambda$; see e.g. ~\cite{BIS,Castro_Shivaji}.} However, if $d=2$ and $\Omega$ is the unit
open ball of $\mathbb{R}^2$, then  there exists $\epsilon^*>0$ such that for
$0<\epsilon< \epsilon^*$ the {\mkrevb continuum of solutions}  is exactly $S$-shaped  with exactly two turning points\cite{Du_Lou}. 
{\mkrevb This continuum may become nevertheless  more complicated than $S$-shaped when 
$\Omega$ is the unit  ball in higher dimension;  see \cite{Du} for $3\leq d\leq 9$.}

{\mkrevb In the one-dimensional case,} a lower bound of  the critical value $\epsilon^*>0$, for which the continuum of solutions  is exactly S-shaped, has been derived in \cite{Korman_Li}. 
It ensures in particular that $\epsilon^* > \epsilon_0$ with $\epsilon_0\approx 4.35$ when $\Omega=(-1,1)$ (\cite[Lem.~3.1]{Korman_Li}), which gives a rather sharp bound of
$\epsilon^*$ in that case,  since $\epsilon^*\leq \frac{1}{4}$ from the general results of \cite{BIS,Castro_Shivaji}. Numerical methods with guaranteed accuracy to enclose a double turning point  strongly suggest that {\mkrevb this theoretical lower bound} can be further improved \cite{Minamoto}. 

{\mkrevb Based on such numerical and theoretical results, it can be reasonably conjectured
that for $\Omega=(-1,1)$,} the
$\lambda$-bifurcation diagram does not present any turning point
(monotone branch) when $\epsilon >1/4$, whereas once $\epsilon <1/4$, an $S$-shaped
bifurcation takes place. {\mkrevb We observe thus on this example,} that a continuous change in the
parameter $\epsilon$ may lead to a qualitative change of its 
$\lambda$-bifurcation diagram on its whole: from a monotone curve to an $S$-shaped
curve as $\epsilon$ crosses 1/4 from above. 

It should be kept in mind however that the critical value of $\epsilon$ at which the $\lambda$-bifurcation diagram experiences a qualitative change, depends on the dimension and the shape of the domain. The numerical results of  \cite{Minamoto} indicate for instance that $\epsilon^* \in(0.238, 0.2396]$ when $\Omega$ is the unit {\mkrevb open} ball of $\mathbb{R}^3$.  In a similar fashion, the $\lambda$-bifurcation diagram does not become necessarily $S$-shaped as an $\epsilon$-critical value is crossed, depending on the shape of the domain and its dimension. The number  of positive solutions of
\eqref{Eq_Gelfand-perturb} may be indeed greater than three for some values
of $\lambda$ in dimension two, when $\Omega$ is the union of several touching balls; see \cite{Du,Dancer_domain_effect}. 
In other words, the critical perturbation that lead to a qualitative  change of the bifurcation diagram depends on the dimension; dimension-dependence that will appear also to play a key role under the more general setting of Theorem \ref{THM_Main}; see also Sect.~  \ref{concl_rem}.


\subsection{Classical structural stability for dissipative semilinear parabolic problems}\label{Sec_struc_stab}

{\mkrevb The qualitative change discussed above  of  the global $\lambda$-bifurcation diagram  is reminiscent, for $\Omega=(-1,1)$, with the so-called {\it cusp bifurcation}
observed in two-parameter families  of autonomous ordinary differential equations (ODEs) \cite{Kutz_book}.}

{\mkrevb Recall that the normal form of a cusp-bifurcation is given by $\dot{x} = \beta_1 + \beta_2 x
-x^3$, where $x \in {\mathbb R},$ and $\beta=(\beta_1,\beta_2) \in
{\mathbb R}^2.$ Two bifurcations curves, $\gamma_{+}$ and  $\gamma_{-}$, are naturally associated with this normal form. Each point of these curves, corresponds to a collision and disappearance of two equilibria, namely a {\it saddle-node
bifurcation}; see \cite{Kutz_book}.}

These two curves divide the parameter
plane into two regions:  inside the ``dead-end'' formed by $\gamma_{+}$ and
$\gamma_{-}$, there are three steady states, two stable and one unstable,
and outside this corner, there is a single steady state, which is
stable.  A crossing of the {\it cusp point}, $\beta=(0,0)$, from outside
the ``dead-end,''  leads to  an unfolding of singularities \cite{arnold1981singularity,Arnold_Geom,Church_Timourian,golubitsky1985singularities}
which consists more exactly to an unfolding of three steady states from a single stable
equilibrium; see also \cite{Kutz_book}.

The qualitative changes described at the end of the previous section
may be therefore interpreted in that terms; see also \cite[Fig. ~1]{minamoto2007numerical}. Singularity theory is a natural framework to study the effects on the 
bifurcation diagram of small perturbations or imperfections to a given {\it static} model \cite{golubitsky1979theory,golubitsky1985singularities}. 
In that spirit, geometric connections between a double turning point and a cusp point have been discussed for certain nonlinear elliptic problems  in e.g.~\cite{BCT88,brezzi1982numerical,moore1980,spence1982non}.  However,  a {\mkrevb general understanding} of the effects of arbitrary perturbations on bifurcation diagrams {\mkrevb remains a challenging problem}, especially when the perturbations are not necessarily smooth; see however \cite{Dancer_domain_effect2,Hen05} for related issues.

Complementarily, it is tempting to describe the aforementioned qualitative changes  in terms of structural instability such as
encountered  in classical dynamical systems theory \cite{abraham1987,Arnold_Geom,TOPOSmale}. Nevertheless, as will be explained in Section \ref{Sec_top_rob}, such topological 
ideas have to  be recast into a formalism which takes into account the functional setting in which the parabolic and corresponding  elliptic problems are considered; see Definitions \ref{Def_Admissible},
 \ref{DEF_Equiv_Rel-family} and \ref{Def_struct-stab} below. 
 
{\mkrevb This formalism  will turn out to be particularly suitable for problems such as arising in  combustion theory or chemical kinetics \cite{F} for which the associated  semigroups are not necessarily dissipative while still exhibiting finitely many local attractors which  attract the trajectories that remain bounded.  To better appreciate this distinction with the standard theory, we recall briefly below the notion of structural stability such as encountered for dissipative infinite-dimensional systems.}

{\mkrevb  Originally formulated for finite-dimensional dynamical systems \cite{Andornov_Ponrtyagin}, the notion of structural stability has been extended to infinite-dimensional dynamical systems, mainly {\it dissipative}.}
As a rule of thumb for such dynamical systems, one investigates structural stability of the semiflow restricted to
a compact invariant set, usually the {\mk global attractor}, rather than the flow in the
original state space \cite[Definition 1.0.1]{Hale_al'84}; an
exception can be found in e.g. ~\cite{Lu} where the author considered the
semiflow in a neighborhood of the global attractor.

In the context of
reaction-diffusion problems, the problem of structural stability is concerned with,
\begin{equation}\label{Eq_parab0}
\left\{
\begin{array}{l}
\partial_t u -\Delta u=g(u), \; \textrm{ in } \Omega,\;  \;g\in
C^1(\mathbb{R},\mathbb{R}),\\
\hspace{1.85em} u|_{\partial{\Omega}}=0,
\end{array}
\right.
\end{equation}
that is assumed to generate a semigroup $\{S(t)\}_{t\geq 0}$ for which 
a {\it global attractor} $\mathcal{A}_g$, in some Banach space $X$,  exists
 \cite{Brunovsky_Polacik, FiedlerRocha99, Hale_al'84,Lu}.

 Within this context, the structural stability problem may  be formulated as the existence problem of an
homeomorphism $H:\mathcal{A}_g \rightarrow \mathcal{A}_{\widehat{g}}$
for arbitrarily small perturbations $\widehat{g}$ of $g$ in some
topology $\mathcal{T}$ on {\mk $C^1(\mathbb{R},\mathbb{R})$}, that aims to
satisfy the following properties
\begin{subequations}\label{Eq_conjug}
\begin{align}
&\mathcal{A}_{\widehat{g}} \mbox{ is a global attractor in } X \mbox{ of } \{\widehat{S}(t)\}_{t\in \mathbb{R}^+}, \mbox{ and }\\
&\forall \; t\in \mathbb{R},\; \forall\; \phi \in \mathcal{A}_g,\; H
(S(t) \phi)=\widehat{S}(t)H(\phi),\label{conjugacy_relation}
\end{align}
\end{subequations}
where $\{\widehat{S}(t)\}_{t\in \geq 0}$ denotes the semigroup generated
by 
\bes
u_t-\Delta u=\widehat{g}(u), \; u|_{\partial \Omega}=0. 
\ees 

The topology $\mathcal{T}$ may be chosen to be for instance the compact-open topology or
the finer topology of Whitney.\footnote{See \cite{TOPOhirch} for general
definitions of these topologies, and see \cite{Brunovsky_Polacik} for
{\mk issues concerning the genericity of structurally stable reaction-diffusion
problems of type \eqref{Eq_parab0}, making use} of the Whitney topology.} Note that in
\eqref{conjugacy_relation}\footnote{ Note that \eqref{conjugacy_relation} may be substituted by the more general condition requiring that for all $t\in
\mathbb{R},$  and for all $\phi \in \mathcal{A}_g,\; H (S(t)
\phi)=\widehat{S}(\gamma(t,\phi))H(\phi)$, with $\gamma:\mathbb{R}\times
\mathcal{A}_g\rightarrow  \mathbb{R}$ an increasing and continuous
function of the first variable. Although this condition is often encountered in the
literature, its use is not particularly required when with the questions considered in the present article; see Remark \ref{Rem_Autre-Equivalence_relation} below.},
the restriction of the dynamics to the {\mk global attractor,} allows {\mkrevb for 
backward trajectories} onto the global attractor giving rise to genuine flows onto
the global attractor; see {\mk e.g. \cite{FiedlerRocha99,Robinson1}.}

 Once a parabolic equation generates 
a semigroup,  a necessary condition to exhibit a global attractor (in some Banach
$X$) is to satisfy a {\it dissipation property}, i.e.~to verify
the existence of an absorbing ball  in $X$ for this semigroup; see e.g.~\cite[Theorem 3.8]{Ma_Wang_Zhong}.

However, such a working assumption may be viewed as too restrictive. {\mkrevb  As mentioned above, in many applications although blow-up in finite or
infinite time may occur for certain trajectories, many other trajectories are typically attracted by local attractors depending on the ``energy'' of their initial data; see \cite{Bebernes_Eberly,Ben_Gal,Caz_Haraux_book, F,Fil05,quittner2007superlinear}.

Furthermore given a parameterized family of elliptic problems subject to perturbations, if one wants to translate a qualitative change of its bifurcation diagram  into dynamical terms for the corresponding parabolic problems, one has to take into account the possible discrepancies of regularity between the (weak) steady state solutions and the semigroup equilibria. The next section introduces a framework to deal with these issues. 
}

\subsection{Topological robustness for  general families of semilinear 
parabolic problems}\label{Sec_top_rob}
To deal with the problem of topological equivalence between families of semigroups which {\mkrevb may exhibit non-global dissipative properties}, we start by introducing several intermediate
concepts allowing for taking into account the possible discrepancies between the functional settings in which the parabolic and corresponding  elliptic problems are well-posed; see Definitions \ref{Def_Admissible},
 \ref{DEF_Equiv_Rel-family} and \ref{Def_struct-stab} below. 
Throughout this section we illustrate these concepts on some standard semilinear parabolic and elliptic problems.

 Let us first consider a parameterized family
$\mathfrak{F}_f:=\{f_{\lambda}\}_{\lambda \in \Lambda}$ of functions
$I\rightarrow \mathbb{R}$, where $\Lambda$ is a metric space, and
$I$ is an unbounded interval of $\mathbb{R}$.  We are concerned with 
 the associated parameterized family of semilinear parabolic problems,
\begin{equation}\label{Eq_family-parab}\tag{$\mathcal{P}_{f_{\lambda}}$}
\left\{
\begin{array}{l}
\partial_tu-\Delta u=f_{\lambda}(u), \quad \mbox{ in } \; \Omega,\\
\; \; \; \; \; \;  u=0, \quad \quad \qquad \mbox{ on } \; \partial
\Omega,
\end{array} \right.
\end{equation}
where $\Omega$ is an open bounded subset of $\mathbb{R}^d$, with
additional regularity  assumptions on its boundary and $f_{\lambda}$ when needed. 

In  general, these problems may generate a family of semigroups acting on a functional space $X$ that  does not necessarily agree 
with the functional space $Y$ on which the (weak) solutions of
\begin{equation}\label{Eq_family-ellip}
\left\{
\begin{array}{l}
-\Delta u=f_{\lambda}(u), \qquad \mbox{ in } \; \Omega,\\
\; \; \; \; \; \;  u=0, \quad \quad \qquad \mbox{ on } \; \partial
\Omega,
\end{array} \right.
\end{equation}
{\mkr exist. As shown in Example \ref{Rem_(ii)-(iii)} below,  such situations arise when weak solutions to  \eqref{Eq_family-ellip} do not necessarily correspond to equilibria of the semigroup} associated with \eqref{Eq_family-parab}.  
{\mkr These considerations} lead us naturally to  introduce the following definition that {\mkrevb makes precise the class of problems \eqref{Eq_family-parab} we consider hereafter.}

\begin{Def}\label{Def_Admissible}
Let $\Lambda$ be a metric space. {\mkr Let $Y$ be a Banach
space  and $\Omega$ be an open bounded subset of $\mathbb{R}^d$, such
that \eqref{Eq_family-ellip} makes
sense in $Y$.}  

{\mkr Given a Banach space $X$,} a family of functions, $\mathfrak{F}_f:=\{f_{\lambda}\}_{\lambda \in
\Lambda^*}$, is be said to be $(X;Y)$-{\mkr compatible} relatively to
$\Lambda^*\subset \Lambda$ and $\Omega$, if there exists a subset
$\Lambda^* \subset \Lambda$, such that for all $\lambda\in
\Lambda^*$ the following properties are satisfied:
\begin{itemize}
\item[(i)] There exists a nonempty subset
$D(f_{\lambda})\subset X$ such that \eqref{Eq_family-parab} generates a
semigroup $\{S_{\lambda}(t)\}_{t\geq 0}$ on $D(f_{\lambda})$.
\item[(ii)] The set $\mathcal{V}_{f_{\lambda}}:=\{u\in Y\;:\; -\Delta u=f_{\lambda}(u), \; u|_{\partial
\Omega}=0\}$  is non-empty.
\item[(iii)] The set $\mathcal{E}_{f_{\lambda}}$ of equilibria of $\{S_{\lambda}(t)\}_{t\geq 0}$, satisfies
$$\mathcal{E}_{f_{\lambda}}:=\{\phi \in D(f_{\lambda})  \;:\; S_{\lambda}(t)\phi
=\phi, \; \forall \; t\geq 0\}=\mathcal{V}_{f_{\lambda}}.$$
\end{itemize}
{\mkk If instead of (iii), 
\be\label{Eq_weak_comp}
\overline{\mathcal{E}_{f_{\lambda}}}^X=\mathcal{V}_{f_{\lambda}}, \mbox{ with } \mathcal{E}_{f_{\lambda}}\varsubsetneq\ \mathcal{V}_{f_{\lambda}}, 
\ee
then 
$\mathfrak{F}_f$ is be said to be weakly $(X;Y)$-compatible relatively to
$\Lambda^*\subset \Lambda$ and $\Omega$. }

\end{Def}

\begin{Rmk}
When the domain $\Omega$ is {\mkr clear from the context}, we simply say that a family of functions is  $(X;Y)$-compatible without referring to $\Omega$.
We will also often say  that the  family of elliptic problems \eqref{Eq_family-ellip} is $(X;Y)$-compatible, when 
the corresponding family of function $\{f_{\lambda}\}$ is $(X;Y)$-compatible.  
\end{Rmk}

We first provide an example of a family of {\it superlinear} elliptic problems that is not $(C^1(\overline{\Omega});H_0^1(\Omega))$-compatible, but {\mkr only} weakly 
 $(C^1(\overline{\Omega});H_0^1(\Omega))$-compatible. 

\begin{Ex}\label{Rem_(ii)-(iii)}
It may happen that $\mathcal{E}_{f_{\lambda}} \neq
\mathcal{V}_{f_\lambda}$ for some $\lambda\in\Lambda^*$. The Gelfand problem \cite{Gelfand_63,Fuj69},
\be\label{Eq_Gelfand}
-\Delta u=\lambda e^u, \; u|_{\partial B_1(0)}=0, 
\ee
where $B_1(0)$ is a unit
ball of $\mathbb{R}^d$ with $3\leq d\leq 9$, is an illustrative example of such a distinction that may arise between the set of equilibrium points and the set of steady states, depending on the functional setting adopted. 

{\mkr In that respect, let us first recall} that for $Y=H^1_0(B_1(0))$
there exists  $\lambda^*>0$ such that for $\lambda
>\lambda^*$ there is no solution to \eqref{Eq_Gelfand}, even in a very
weak sense \cite{Brezis_al'96}, whereas for
$\lambda\in[0,\lambda^*]$ there exists at least a solution (in $Y$) so that $\mathcal{V}_f \neq \emptyset$; see
\cite{Brezis_Vazquez} and Proposition \ref{Prop_classical-results}
below. 

{\mkr In what follows we denote by $A_p$ the (closed) Laplace operator considered as an unbounded
operator on $L^{p}(B_1(0))$ under Dirichlet conditons, with domain 
\bes
D(A_p)=W^{2,p}(B_1(0))\cap
W_0^{1,p}(B_1(0)); 
\ees
see \cite[Sect.~7.3]{PAZ}.} 

Let us {\mkr now} take $\Lambda^*$ to be $[0,\lambda^*]$ {\mkr and} let us choose
$X$ to {\mkr be the following subspace constituted by  radial functions}
\be
X:=\{\varphi(r) \;:\; \varphi \in D(A_p^{\beta})\},
\ee
where {\mkr $D(A_p^{\beta})$ denotes the domain of $A_p^{\beta}$, the fractional power of $A_p$, where $0<\beta \leq 1$; see e.g.~ \cite[Sect.~2.6]{PAZ} and \cite[Sect.~1.4]{Hen81}.}

For $p > d$ and $1 > \beta >
(d+p)/(2p)$, {\mkr it is known} that $D(A_p^{\beta})$ is compactly embedded in
$C^1(\overline{B_1(0)})$ {\mkr \cite[Thm.~1.6.1]{Hen81}, and thus {\mkr $X\hookrightarrow C^1(\overline{B_1(0)})$}}.
Then for {\mkr  any $\lambda \in [0, \lambda^*]$ and for such a choice of $p$ and
$\beta$}, the parabolic problem 
$\eqref{Eq_family-parab}$ is well posed in $X$  with $f_{\lambda}(x)=\lambda \exp(x)$, see \cite{Caz_Haraux_book,SellYou,Lunardi_book}.

{\mkr As a consequence, by introducing}
\be\label{Eq_equilibriaset}
D(f_{\lambda}):=\{u_0 \in X\;:\;
u_{\lambda}(t;u_{0}) \mbox{ exists
 for all }t>0, \mbox{ and }\underset{t>0}\sup\;\|A_p^{\beta}
u_{\lambda}(t;u_{0})\|_p<\infty\},
\ee 
 a {\mkr nonlinear} semigroup $\{S_{\lambda}(t)\}_{t\geq 0}$ on
$D(f_{\lambda})$ {\mkr can be defined as follows}
\be
S_{\lambda}(t)u_0 :=u_{\lambda}(t;u_0), \; t \geq 0, \; u_0 \in D(f_{\lambda}),
\ee
where $u_{\lambda}(t;u_0)$ denotes the solution of
\eqref{Eq_family-parab} emanating from $u_0$ at $t=0$.

However {\mkr the property} (iii) of Definition
\ref{Def_Admissible} is not {\mkr verified} here. 
{\mkr Indeed, for}
$\lambda=\lambda^{\sharp}=2(d-2) \in (0,\lambda^*)$ there exists {\mkr in $H_0^1(B_1(0))$} an
unbounded solution of the Gelfand problem \eqref{Eq_Gelfand}  \textemdash\, in the weak sense of \cite{Brezis_al'96}  \textemdash\, given by
\bes
u_{\lambda^{\sharp}}(x):=-2\log \|x\|,
\ees
see \cite{Brezis_Vazquez}.

{\mkr This solution does not belong to} $D(f_{\lambda})\subset X \subset
C^1(\overline{B_1(0)})$ and in particular to
$\mathcal{E}_{f_{\lambda^{\sharp}}}$, {\mkr the set  of equilibria of $S_{\lambda}(t)$ in $D(f_{\lambda})$ given by \eqref{Eq_equilibriaset}}. 

Therefore the family
\be
\mathfrak{F}_{\mbox{exp}}=\{x\mapsto \lambda e^x, x\geq 0, \; \lambda \in [0,\lambda^*]\},
\ee
is not $(C^1(\overline{B_1(0)});H_0^1(B_1(0)))$-compatible relatively
to $[0,\lambda^*]$ where $B_1(0)$ is the unit open ball of
$\mathbb{R}^d$, for $3\leq d\leq 9$. 

Nevertheless this family is weakly $(C^1(\overline{B_1(0)});H_0^1(B_1(0)))$-compatible relatively to $[0,\lambda^*]$, {\mkr in the sense of Definition \ref{Def_Admissible}. This property} results from the fact that the singular steady state $u_{\lambda^{\sharp}}$ can be approximated by a sequence of equilibria in $X$ for the relevant topology \cite{Brezis_Vazquez,Joseph_Lund}, so that in particular condition \eqref{Eq_weak_comp} is verified.  

\end{Ex}

The following proposition identifies a broad class of families of {\it sublinear} elliptic {\mkr problems} which are $(C_0^{0,2\alpha}([0,1]);C^{2}([0,1]))$-compatible for $\alpha \in
(\frac{1}{2},1)$.  

\begin{proposition}\label{Prop_identif_admissible}
Let us consider a function $f:[0,\infty)\rightarrow (0,\infty)$ that
satisfies the following conditions:
\begin{itemize}
\item[(G$_1$)] $f$ is locally Lipschitz, and such that for all $\sigma>0$, the following properties hold: 
\begin{itemize}
\item[(i)] $f\in C^{\theta}([0,\sigma])$, for some $\theta \in (0,1)$ (independent of $\sigma$),   and
\item[(ii)] $\exists \;  \omega(\sigma)>0$ such that 
\bes
f(y)-f(x)>-\omega(\sigma)  (y-x), \; 0\leq x<y\leq \sigma.
\ees
\end{itemize}

\item[(G$_2$)] $x \mapsto f(x)/x$ is strictly decreasing on
$(0,\infty)$.
\item[(G$_3$)] $\underset{x\rightarrow \infty}\lim (f(x)/x)=b,$ with
$b\geq 0$.
\end{itemize}
Let us define
$a=\underset{x\rightarrow 0}\lim (f(x)/x)$, and $\Lambda^*:=(\frac{\lambda_1}{a},
\frac{\lambda_1}{b}).$

If $a<\infty$, then $\mathfrak{F}_f=\{\lambda f\}_{\lambda \in \Lambda^*}$ is
$(C_0^{0,2\alpha}([0,1]);C^{2}([0,1]))$-compatible  relatively to
$\Lambda^*$, for $\alpha\in(\frac{1}{2},1)$.
\end{proposition}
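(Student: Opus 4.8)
The plan is to verify, for every $\lambda\in\Lambda^*$, the three requirements of Definition~\ref{Def_Admissible} with $X=C_0^{0,2\alpha}([0,1])$, $Y=C^2([0,1])$ and $\Omega=(0,1)$; throughout, $\lambda_1=\pi^2$ denotes the first Dirichlet eigenvalue of $-\Delta$ on $(0,1)$ and $\phi_1(x)=\sin(\pi x)>0$ a principal eigenfunction. Two preliminary remarks organise the argument: by (G$_2$) one has $a>b$, so $\Lambda^*\ne\emptyset$, and $\lambda\in\Lambda^*$ is equivalent to ``$\lambda a>\lambda_1$ and $\lambda b<\lambda_1$'' (with $\lambda_1/0:=\infty$); and since $f$ is continuous, the standing assumption $a=\lim_{x\to0^+}f(x)/x<\infty$ forces $f(0)=0$ --- this is exactly what makes the superposition operator $u\mapsto\lambda f(u)$ send $X$ into $C_0([0,1])$ and the elliptic solutions fall into the domain of the parabolic generator, which is ultimately what (iii) needs. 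I would treat (ii) first, then the generation of the semiflow (i), and deduce (iii) from the two.

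\emph{Steady states (ii).} I would build an ordered pair $\underline u\le\bar u$ of a sub- and a supersolution of $-u''=\lambda f(u)$ on $(0,1)$ with $u(0)=u(1)=0$. For $\bar u$: fix $b'\in(b,\lambda_1/\lambda)$ (possible since $\lambda b<\lambda_1$); by (G$_2$)--(G$_3$) the ratio $f(s)/s$ decreases to $b$, so $f(s)<b's$ for $s\ge s_0$, hence $f(s)\le C_0+b's$ for all $s\ge0$ with $C_0:=\max_{[0,s_0]}f$. Then the unique solution of $-\bar u''-\lambda b'\bar u=\lambda C_0$, $\bar u(0)=\bar u(1)=0$, is positive on $(0,1)$ (because $\lambda b'<\lambda_1$) and obeys $-\bar u''=\lambda C_0+\lambda b'\bar u\ge\lambda f(\bar u)$. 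For $\underline u$: take $\underline u=\epsilon\phi_1$; using $\lambda a>\lambda_1$ and $f(s)/s\to a<\infty$ as $s\to0^+$, pick $a'\in(\lambda_1/\lambda,a)$ and $\delta>0$ with $f(s)\ge a's$ on $(0,\delta]$, so that $-\underline u''=\lambda_1\epsilon\phi_1\le\lambda a'\epsilon\phi_1\le\lambda f(\underline u)$ on $(0,1)$ once $\epsilon\|\phi_1\|_\infty\le\delta$, with equality $0=0$ on $\partial\Omega$; shrinking $\epsilon$ more gives $\underline u\le\bar u$ (Hopf's lemma at the endpoints, $\bar u$ bounded below on compacta of $(0,1)$). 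Monotone iteration between $\underline u$ and $\bar u$ --- legitimate because (G$_1$)(ii) makes $s\mapsto Ks+\lambda f(s)$ nondecreasing on $[0,\|\bar u\|_\infty]$ for $K\ge\lambda\,\omega(\|\bar u\|_\infty)$ --- produces a solution $u_\lambda$ with $\underline u\le u_\lambda\le\bar u$; since $f(u_\lambda)\in C^\theta$ by (G$_1$)(i), Schauder estimates yield $u_\lambda\in C^{2,\theta}([0,1])\subset Y$, so $\mathfrak E_{f_\lambda}\ne\emptyset$. (Uniqueness of the positive solution, not needed here, follows from (G$_2$) in the spirit of Brezis and Oswald.)

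\emph{The semiflow (i).} Extend $f$ to $\mathbb{R}$ by $f\equiv0$ on $(-\infty,0]$ (consistent with $f(0)=0$, still locally Lipschitz), so that the superposition operator $F(u):=\lambda f(u)$ is locally Lipschitz from $C_0([0,1])$ into itself, a fortiori from $X=C_0^{0,2\alpha}$ into $C_0([0,1])$. Since the Dirichlet Laplacian on $(0,1)$ generates an analytic semigroup on $C_0([0,1])$ and, for $\alpha\in(\frac12,1)$, $X$ is a member (a little-H\"older space) of the associated interpolation scale, the standard theory of semilinear parabolic problems \cite{Hen81} provides a unique local semiflow $u_\lambda(\cdot\,;u_0)$ on $X$; for $u_0\ge0$ this solution stays $\ge0$ (comparison with $0$) and is a classical solution of \eqref{Eq_family-parab} for $t>0$. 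The bound (G$_1$)(ii) re-enters here: for two $[0,\sigma]$-valued solutions, adding $\lambda\,\omega(\sigma)w$ to the equation satisfied by their difference $w$ makes the parabolic maximum principle applicable, so the semiflow is order preserving on such data. Hence any $u_0\in X$ with $\underline u\le u_0\le\bar u$ generates a solution trapped in $[\underline u,\bar u]$, thus global and bounded in $X$. Taking $D(f_\lambda):=\{u_0\in X\,:\,u_\lambda(\cdot\,;u_0)\text{ is defined for all }t>0\text{ and bounded in }X\}$, which therefore contains the nonempty order interval $\{u_0\in X\,:\,\underline u\le u_0\le\bar u\}$, the restriction of the semiflow to $D(f_\lambda)$ is the semigroup $\{S_\lambda(t)\}_{t\ge0}$ required by (i).

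\emph{Equilibria (iii), and the main difficulty.} If $\phi\in D(f_\lambda)$ satisfies $S_\lambda(t)\phi=\phi$ for all $t$, the mild formulation gives $\frac1t(\phi-e^{t\Delta}\phi)\to\lambda f(\phi)$ as $t\to0^+$, so $\phi$ lies in the generator's domain and solves $-\Delta\phi=\lambda f(\phi)$, $\phi|_{\partial\Omega}=0$; by (G$_1$)(i) and Schauder regularity $\phi\in C^{2,\theta}([0,1])\subset Y$, so $\phi\in\mathfrak E_{f_\lambda}$. Conversely, any $u\in\mathfrak E_{f_\lambda}$ is a nonnegative $C^2$ function vanishing on $\partial\Omega$ with $u''|_{\partial\Omega}=-\lambda f(0)=0$, hence belongs to $X$; the constant curve $t\mapsto u$ is the (unique, by (i)) solution of \eqref{Eq_family-parab} with datum $u$, so $u\in D(f_\lambda)$ and $S_\lambda(t)u=u$, i.e.\ $u\in\mathcal E_{f_\lambda}$. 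Thus $\mathcal E_{f_\lambda}=\mathfrak E_{f_\lambda}$, completing the verification. I expect the real work to sit in step (i): fitting the analytic-semigroup machinery to the H\"older phase space $X$, identifying the abstract solutions with classical ones so the comparison principle built on (G$_1$)(ii) can be used, and thereby securing global existence on a genuinely nonempty $D(f_\lambda)$; steps (ii) and (iii) are comparatively routine once the sublinear structure (G$_2$)--(G$_3$) and the consequence $f(0)=0$ of $a<\infty$ are in hand.
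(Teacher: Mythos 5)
Your proof is correct, and its skeleton---verifying (i), (ii), (iii) of Definition \ref{Def_Admissible} via analytic-semigroup theory on the H\"older scale together with the sublinear elliptic structure---matches the paper's. You diverge genuinely in two places. For the steady states (ii), the paper simply invokes Taira's extension of the Brezis--Oswald theorem, which gives existence \emph{and} uniqueness of the positive $C^2$ solution exactly for $\lambda\in(\lambda_1/a,\lambda_1/b)$; your sub/supersolution pair with monotone iteration (made legitimate by the one-sided Lipschitz condition (G$_1$)(ii)) is a self-contained, more elementary substitute that delivers the existence statement, which is all that (ii) requires. For global existence in (i), the paper extracts from (G$_2$)--(G$_3$) and $a<\infty$ the linear growth bound $0\le f(x)\le C(1+x)$ and concludes via \cite[Proposition 6.3.5]{Lunardi_Lecture-notes} that \emph{every} solution is global, so that $D(f_\lambda)$ is just the set of data with bounded orbits; you instead prove globality only on the invariant order interval between $\underline u$ and $\bar u$ via the comparison principle. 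Both yield a nonempty invariant $D(f_\lambda)$, and the two definitions of $D(f_\lambda)$ coincide once all orbits are known to be global, but the paper's route shows more (only boundedness, never existence, can fail) and is what underpins the subsequent remark on grow-up solutions, so the linear growth bound is worth recording explicitly. Finally, your observation that $a<\infty$ forces $f(0)=0$ --- in tension with the stated codomain $(0,\infty)$, an inconsistency in the statement that the paper's proof leaves unaddressed --- is precisely what puts the elliptic solutions in $C_0^{0,2\alpha}([0,1])$ and makes the zero extension of $f$ locally Lipschitz; making it explicit is an improvement rather than a gap.
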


\begin{proof}
This proposition is a direct consequence of the theory of analytic semigroups
\cite{Lunardi_book,PAZ,SellYou,Taira_book} and the theory of sublinear elliptic equations \cite{Brezis_Oswald}.

Consider  $\Lambda=[0,\infty)$, and $f_{\lambda}=\lambda f$, for
$\lambda \in [0,\infty)$. Then from \cite[Theorem
5]{Taira} which generalizes the ``classical'' result of 
\cite[Theorem 1]{Brezis_Oswald}, we have that
$$- \partial_{xx}^2 u=\lambda f(u),\; u(0)=u(1)=0,$$ has a unique solution
$u\in C^{2}([0,1)])$ if and only if 
{\mkk 
\be  
 \frac{\lambda_1}{a}<\lambda
< \frac{\lambda_1}{b},
\ee
 }
where $\lambda_1$ is the first eigenvalue of $-\partial_{xx}^2$ with Dirichlet condition.

Let us consider $\Lambda^*:=(\frac{\lambda_1}{a},
\frac{\lambda_1}{b}).$ The  realization of the Laplace operator
$A=-\partial_{xx}^2$ in $X=C([0,1])$ with domain,
\be\label{D(A)}
D(A)=C^{0,2\alpha}_0([0,1]):=\{u\in C^{0,2\alpha}([0,1])\; :
\;u(0)=u(1)=0 \},
\ee
 is sectorial for $\alpha \in (\frac{1}{2},1)$,
and therefore generates an analytic semigroup on $X$; see
\cite{Lunardi_book}.

The theory of analytic semigroups shows that under the aforementioned 
assumptions on $f$, for every $u_0\in C_0^{0,2\alpha}([0,1])$, there
exists a unique solution {\mkrevb $u_{\lambda} \in
C^{1}((0,\tau_{\lambda}(u_0));C^{2}([0,1]))$} of \eqref{Eq_family-parab}
defined on a maximal interval $[0,\tau_{\lambda}(u_0)),$ with
$\tau_{\lambda}(u_0)>0$  (and $f_{\lambda}=\lambda f$); see e.g. \cite[Proposition
6.3.8]{Lunardi_Lecture-notes}. Since our assumptions on $f$ imply that
there exists $C>0$ such that $0\leq f(x)\leq C (1+x)$ for all $x\geq 0$,
from e.g.~\cite[Proposition 6.3.5]{Lunardi_Lecture-notes} we can deduce
that $\tau_{\lambda}(u_0)=\infty.$

Let us introduce now,
\begin{equation}\label{Domain_Semi-group}
D(f_{\lambda}):=\{u_0\in   C_0^{0,2\alpha}([0,1])\;:\;
\underset{t>0}\sup\;\|u_{\lambda}(t;u_{0})\|_{C^{2}([0,1])}<\infty\},
\end{equation}
 then $S_{\lambda}(t): D(f_{\lambda}) \rightarrow D(f_{\lambda})$, defined by
$S_{\lambda}(t)u_0=u_{\lambda}(t;u_0)$ is well defined for all
$t\geq 0$, and for all $u_0 \in D(f_{\lambda})$. {\mkrevb From the existence and uniqueness properties of the solutions, we deduce that 
$\{S_{\lambda}(t)\}_{t \geq 0}$ is  a (nonlinear) semigroup on
$D(f_{\lambda}),$ in the sense that $S_{\lambda}(t) \in C(D(f_{\lambda}),D(f_{\lambda})),$
\be
S_{\lambda}(t+s)=S_{\lambda}(t)\circ S_{\lambda}(s), \; \; \forall\, t,s \geq 0,
\ee 
and that each trajectory $t\mapsto S_{\lambda}(t)u_0$ is continuous in $D(f_{\lambda})$.}

It is now easy to verify from what precedes that (ii) and (iii) of
Definition \ref{Def_Admissible} are satisfied. We have thus proved
that $\mathfrak{F}_f=\{\lambda f\}_{\lambda \in \Lambda^*}$ is
$(C_0^{0,2\alpha}([0,1]);C^{2}([0,1]))$-compatible  relatively to
$\Lambda^*$, for $\alpha\in(\frac{1}{2},1)$.
\end{proof}

\begin{Rmk}
Let us remark that if  we assume furthermore that $\lambda b> \lambda_1^{-1}$, it can be then proved\footnote{Based on Lyapunov functions techniques
\cite{Caz_Haraux_book} and the non-increase of lap-number of
solutions for scalar semilinear parabolic problems
\cite{Matano_lap-number}.} that there exists at least one solution $u$ to
$\eqref{Eq_family-parab}$ emanating from some $u_0\in
C_0^{0,2\alpha}([0,1])$ for which $u$ does not remain in any bounded set for
all time \cite[Lemma 10.1, Remark 10.2]{Ben_Gal}. {\mkrevb Such a trajectory becomes
unbounded in infinite time.} It is the possible occurrence of such a phenomenon that motivated  to include a boundedness requirement in the definition of  $D(f_{\lambda})$ in \eqref{Domain_Semi-group}. 
\end{Rmk}

\begin{Ex}\label{Rem_Arrenus}
 Let $g_{\epsilon}(x)=\exp(x/(1+\epsilon x))$.
A simple calculation shows that for $x\neq 0$,
\bes
\Big(\frac{g_{\epsilon}(x)}{x}\Big)'=-\frac{\exp(\frac{x}{1+\epsilon x})}{x^2(1+\epsilon x)^2} (\epsilon^2 x^2 +(2 \epsilon-1) x +1),
\ees
which implies in particular that $g_{\epsilon}(x)/x$ is strictly decreasing for all $x > 0$ if
$\epsilon> 1/4$. Note also that Condition (G$_1$) of Proposition \ref{Prop_identif_admissible}
 is satisfied,  and that  $b=0$ and $a = \infty$ in this case.

A semigroup can still be defined (for each $\lambda \in (0,\infty)$) on the subset $D(\lambda g_{\epsilon})$ such as given in \eqref{Domain_Semi-group} with $f_{\lambda}=\lambda g_{\epsilon}.$ From the proof of 
 Proposition \ref{Prop_identif_admissible},  it is then easy to deduce that the family $\{\lambda
g_{\epsilon}\}_{\lambda \in (0,\infty)}$ is in fact 
$(C_0^{0,2\alpha}([0,1]);C^{2}([0,1]))$-compatible relatively to
$(0,\infty)$,  for $\alpha \in (\frac{1}{2},1)$  and $\epsilon>1/4.$

\end{Ex}

Hereafter, $X$  and $Y$ are two Banach spaces with respective
norms denoted by $\|\cdot\|_X$ and $\|\cdot\|_Y$; and $\Omega$ denotes
an open bounded subset of $\mathbb{R}^d$, such that  the following elliptic problem
\bea
-\Delta u&=f_{\lambda}(u), \; \mbox{ in } \Omega,\\ 
u&=0, \qquad \mbox{ on }\partial \Omega,
\eea
makes sense in $Y$. We
introduce below a concept of topological equivalence between
families of semilinear parabolic problems for $(X;Y)$-compatible
families of nonlinearities.

\begin{Def}\label{DEF_Equiv_Rel-family}
Let $\Lambda$ be a metric space and $I$ be an unbounded interval of
$\mathbb{R}$. Let $\mathcal{N}(I,\mathbb{R})$ be a set of functions from $I$ to $\mathbb{R}$.
Consider two  families $\{f_{\lambda}\}_{\lambda \in \Lambda^*}$ and
$\{\widehat{f}_{\lambda}\}_{\lambda \in \widehat{\Lambda}^*}$ of
$\mathcal{N}(I,\mathbb{R})$, which are {\mkr both} $(X;Y)$-compatible relatively
to $\Lambda^*$ and $\widehat{\Lambda}^*$ respectively.  

For each
$\lambda\in \Lambda^*$ and $\lambda\in \widehat{\Lambda}^*$, one denotes by
 $\{S_{\lambda}(t)\}_{t\geq 0}$ and
$\{\widehat{S}_{\lambda}(t)\}_{t\geq 0}$, the semigroups acting on
$D(f_{\lambda})$ and  $D(\widehat{f}_{\lambda})$,  and associated with \eqref{Eq_family-parab} and  {\normalfont(}$\textcolor{blue}{\mathcal{P}_{\widehat{f}_{\lambda}}}${\normalfont )}, respectively.  {\mkr One denotes
finally by} $\mathfrak{S}_{f}$ and by $\mathfrak{S}_{\widehat{f}}$, the
respective family of such semigroups.

Then $\mathfrak{S}_f$ and $\mathfrak{S}_{\widehat{f}}$ are called
topologically equivalent if there exists an homeomorphism

$$H: \Lambda \times \underset{\lambda\in\Lambda^*}\bigcup D(f_{\lambda})
\rightarrow \Lambda \times \underset{\lambda \in
\widehat{\Lambda}^*}\bigcup D(\widehat{f}_{\lambda}),$$ such that
$H(\lambda,u)=(p(\lambda),H_{\lambda}(u))$ where $p$ and
$H_{\lambda}$ satisfy the following two conditions:
\begin{itemize}
\item[(i)] $p$ is an homeomorphism from $\Lambda^*$ to $\widehat{\Lambda}^*$,
\item[(ii)] for all $\lambda \in \Lambda^*$, $H_{\lambda}$ is an
homeomorphism from $D(f_{\lambda})$ to $D(\widehat{f}_{p(\lambda)})$,
such that,
\begin{equation}\label{Eq_Equiv_Rel-family}
\forall \; \lambda \in \Lambda^*, \; \forall \; u_0 \; \in
D(f_{\lambda}), \; \forall \; t>0, \; H_{\lambda} (S_{\lambda}(t)
u_0)=\widehat{S}_{p(\lambda)}(t)H_{\lambda}(u_0).
\end{equation}
\end{itemize}
In case of such an equivalence, the families of problems $\{\eqref{Eq_family-parab}\}_{\lambda\in
\Lambda^*}$ and $\{(\textcolor{blue}{\mathcal{P}_{\widehat{f}_{\lambda}}})\}_{\lambda\in
\widehat{\Lambda}^*}$ is also referred to as topologically equivalent.

\end{Def}

\begin{Rmk}\label{Rem_Autre-Equivalence_relation}
Note that the relation of  topological equivalence  given by
\eqref{Eq_Equiv_Rel-family} may be relaxed as follows,
\begin{equation}\label{Eq_Equiv_Rel-family2}
\forall \; \lambda \in \Lambda, \; \forall \; u_0 \; \in
D(f_{\lambda}), \; H_{\lambda} (S_{\lambda}(t)
u_0)=\widehat{S}_{p(\lambda)}(\gamma(t,u_0))H_{\lambda}(u_0),
\end{equation}
where $\gamma: [0,\infty)\times D(f_{\lambda}) \rightarrow
[0,\infty)$ is an increasing and continuous function of the first
variable. 

The equivalence relation \eqref{Eq_Equiv_Rel-family2} is known as the
topological {\it orbital} equivalence\footnote{Such as classically encountered in
finite-dimensional dynamical systems theory \cite{TOPOkat}}. It allows,  in particular,  for systems presenting periodic orbits of different periods, to be equivalent.\footnote{Avoiding in this way the so-called problem of modulii; see
\cite{Arnold_Geom,TOPOkat}.} 

{\mkrevb In contrast, the topological equivalence relation
\eqref{Eq_Equiv_Rel-family} excludes this possibility, which might be viewed as too restrictive for general semigroups, at a first glance. However, for
semigroups generated by semilinear parabolic equations over 
bounded domain, due to their gradient structure
\cite[Sect.~9.4]{Caz_Haraux_book}, this problem of modulii does
not occur since the $\omega$-limit set of each semigroup is
typically included into the set of its equilibria
\cite[Thm.~9.2.7]{Caz_Haraux_book}.} \end{Rmk}

\begin{Def}\label{Def_Fold-point}
Let $\mathfrak{S}_f$ be a family of semigroups as defined in
Definition \ref{DEF_Equiv_Rel-family}. Let $\mathcal{E}_f$  be the
corresponding family of equilibria, in the sense that,
\begin{equation}\label{Eq_family-Equilibria}
\mathcal{E}_f:=\{(\lambda,\phi_{\lambda}) \in \Lambda \times
D(f_{\lambda})\; : \; S_{\lambda}(t)\phi_{\lambda}=\phi_{\lambda},
\; \forall\; t\in (0,\infty)\}.
\end{equation}
Assume that $\Lambda$ is an unbounded interval of $\mathbb{R}$. A
{\it fold-point} on $\mathcal{E}_f$ is a point $(\lambda^*, u^*)\in
\mathcal{E}_f$, such that there exists a local continuous map
\bes
\mu:s\in (-\epsilon,\epsilon) \mapsto (\lambda(s),u(s)) \mbox{ for some }
\epsilon>0, 
\ees
verifying the following properties:
\begin{itemize}
\item[(F$_1$)] For all $s\in  (-\epsilon,\epsilon)$, one has $(\lambda(s),u(s)) \in
\mathcal{E}_f$, with $(\lambda(0),u(0))=(\lambda^*, u^*)$.
\item[(F$_2$)] The map $s\mapsto \lambda(s)$ has a unique extremum
 on $(-\epsilon, \epsilon)$ attained at $s=0$.
\item[(F$_3$)] There exists $r^*>0$
such that for all $0<r<r^*,$ the set 
\bes
\partial\mathfrak{B}((\lambda^*, u^*);r)
\bigcap \{\mu(s), \; s\in (-\epsilon,\epsilon) \},
\ees
 has cardinal two;
where 
\be\label{Eq_B}
\mathfrak{B}((\lambda^*, u^*);r):=\{(\lambda,u)\in
\mathbb{R}\times D(f_{\lambda}), \; : \;
|\lambda-\lambda^*|+{\mkr \|u-u^*\|_X}<r\}.
\ee
\end{itemize}
\end{Def}

\begin{Def}\label{Def_Multiple-point}
Let $\mathfrak{S}_f$ be a family of semigroups as defined in
Definition \ref{DEF_Equiv_Rel-family}. Let $\mathcal{E}_f$  be the
corresponding family of equilibria given by
\eqref{Eq_family-Equilibria}. Assume that $\Lambda$ is an
unbounded interval of $\mathbb{R}$. Let $n$ be an integer such that
$n\geq 3$. A {\it multiple-point} with $n$ branches on
$\mathcal{E}_f$ is a point $(\lambda^*, u^*)\in \mathcal{E}_f$, such
that there exists at most $n$ local continuous map 
\bes
\mu_i:s\in
(-\epsilon_i,\epsilon_i) \mapsto (\lambda_i(s),u_i(s)) \mbox{ for some }
\epsilon_i>0, \; i\in\{1,...,n\},
\ees
 verifying the following
properties:
\begin{itemize}
\item[(G$_1$)] $\mu_i\neq \mu_j$ for all $i\neq j$.
\item[(G$_2$)] For all $i\in\{1,...,n\}$, and for all $s\in  (-\epsilon_{i},\epsilon_i)$, one has $(\lambda_i(s),u_i(s)) \in
\mathcal{E}_f$, with $(\lambda_i(0),u_i(0))=(\lambda^*, u^*)$.
\item[(G$_3$)] There exists $r^*>0$
such that for all $0<r<r^*,$  the set 
$$\partial\mathfrak{B}((\lambda^*, u^*);r)
\bigcap \underset{i\in \{1,...,n\}}\bigcup \{\mu_i(s), \; s\in
(-\epsilon_i,\epsilon_i) \},$$ 
has cardinal  $n$, where
$\mathfrak{B}((\lambda^*, u^*);r)$ is as given in \eqref{Eq_B}.
\end{itemize}
\end{Def}

\begin{Rmk}
The terminologies of Definitions \ref{Def_Fold-point} and 
\ref{Def_Multiple-point} regarding the singular points of $\mathcal{E}_f$ will be also adopted, when they apply, for the singular points of the solution set associated with the family of elliptic problems  \eqref{Eq_family-ellip}. 
\end{Rmk}

Based on these definitions, simple criteria of non-topological
equivalence between two families of semigroups can be then formulated.   The proposition below whose proof is left to the reader's discretion,  summarizes these criteria.

\begin{proposition}\label{Prop_criteria-noneq}
Assume $\Lambda$ is an unbounded interval of $\mathbb{R}$. Let
$\mathfrak{S}_f$ and $\mathfrak{S}_{\widehat{f}}$ be two families of
semigroups as defined in Definition \ref{DEF_Equiv_Rel-family}. Let
$\mathcal{E}_f$ and $\mathcal{E}_{\widehat{f}}$ be the corresponding
families of equilibria. Then $\mathfrak{S}_f$ and
$\mathfrak{S}_{\widehat{f}}$ are not topologically equivalent if one of
the following conditions are fulfilled.
\begin{itemize}
\item[(i)] $\mathcal{E}_f$ is constituted by a single unbounded continuum in $\Lambda \times X$, and $\mathcal{E}_{\widehat{f}}$ is the union of at least two disjoint
unbounded continua in $\Lambda \times X$.
\item[(ii)] $\mathcal{E}_f$  and
$\mathcal{E}_{\widehat{f}}$ are each constituted by a single continuum,
and the set of fold-points of $\mathcal{E}_f$ and
$\mathcal{E}_{\widehat{f}}$ are not in one-to-one correspondence.
\item[(iii)] $\mathcal{E}_f$  and
$\mathcal{E}_{\widehat{f}}$ are each constituted by a single continuum,
and there exists an integer $n\geq 3$ such that the set of
multiple-points with $n$ branches of $\mathcal{E}_f$ and
$\mathcal{E}_{\widehat{f}}$ are not in one-to-one correspondence.
\end{itemize}
\end{proposition}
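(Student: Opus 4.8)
The plan is to prove each of the three non-equivalence criteria by contradiction, assuming the existence of a topological equivalence $H(\lambda,u)=(p(\lambda),H_\lambda(u))$ as in Definition \ref{DEF_Equiv_Rel-family} and showing that it forces a contradiction with the stated structural discrepancy between $\mathcal{E}_f$ and $\mathcal{E}_{\widehat{f}}$. The central observation, true in all three cases, is that the conjugacy relation \eqref{Eq_Equiv_Rel-family} implies that $H$ maps equilibria to equilibria: if $S_\lambda(t)\phi_\lambda=\phi_\lambda$ for all $t>0$, then $H_\lambda(\phi_\lambda)=H_\lambda(S_\lambda(t)\phi_\lambda)=\widehat{S}_{p(\lambda)}(t)H_\lambda(\phi_\lambda)$ for all $t>0$, so $(p(\lambda),H_\lambda(\phi_\lambda))\in\mathcal{E}_{\widehat{f}}$. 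Conversely, using $H^{-1}$, one gets that $H$ restricts to a homeomorphism $\mathcal{E}_f\to\mathcal{E}_{\widehat{f}}$ (with respect to the subspace topology inherited from $\Lambda\times X$), and moreover this homeomorphism respects the fibration over $\Lambda$: it sends the slice $\mathcal{E}_f\cap(\{\lambda\}\times X)$ onto $\mathcal{E}_{\widehat{f}}\cap(\{p(\lambda)\}\times X)$.

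For item (i): a homeomorphism preserves connectedness and the decomposition into connected components. If $\mathcal{E}_f$ is a single unbounded continuum, its homeomorphic image $\mathcal{E}_{\widehat{f}}$ must also be connected, contradicting the assumption that $\mathcal{E}_{\widehat{f}}$ is a disjoint union of at least two unbounded continua. (One should note here that "continuum" is being used to mean a connected set, as is standard in global bifurcation theory; boundedness/unboundedness is not even needed for this particular contradiction, only connectedness.) For item (ii): I would argue that the notion of fold-point from Definition \ref{Def_Fold-point} is, modulo the discrete datum it encodes, preserved under the fibered homeomorphism $H$. The key is condition (F$_3$): a fold-point is characterized by the property that small spheres $\partial\mathfrak{B}((\lambda^*,u^*);r)$ around it meet the local branch in exactly two points, while (F$_2$) says $\lambda$ attains a strict extremum there. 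Under $H$, the local branch map $\mu$ transforms to $p\circ(\text{first coord})$ composed with $H_{\lambda(s)}$ on the second, which is again a continuous local branch through $(p(\lambda^*),H_{\lambda^*}(u^*))$; since $p$ is a homeomorphism of an interval it is strictly monotone, so it preserves the "unique extremum" property (F$_2$); and the cardinality-two condition (F$_3$) is a topological statement about how an arc sits near a point, invariant under homeomorphism (after replacing the specific metric balls by a neighborhood basis, which is legitimate since the defining condition is stable under shrinking $r$). Hence fold-points correspond bijectively, contradicting the hypothesis.

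For item (iii): the argument is entirely parallel, using Definition \ref{Def_Multiple-point} instead. A multiple-point with $n$ branches is detected by condition (G$_3$): small spheres around it meet the union of the $n$ local branches in exactly $n$ points. This "local branching number" is a topological invariant of the pair (space, point), hence preserved by the homeomorphism $H$ restricted to $\mathcal{E}_f$, so multiple-points with $n$ branches on $\mathcal{E}_f$ are in one-to-one correspondence with those on $\mathcal{E}_{\widehat{f}}$, contradicting the hypothesis. I expect the main (really the only) subtlety to be the passage from the metric-ball formulation in (F$_3$)/(G$_3$) to a genuinely homeomorphism-invariant statement: one must observe that the condition "for all sufficiently small $r$, $\partial\mathfrak{B}(\cdot;r)$ meets the branch(es) in exactly $k$ points" does not literally transfer (a homeomorphism need not send metric spheres to metric spheres), but that it is equivalent to a purely topological local-separation property — namely that removing the point from a small connected neighborhood of it within $\mathcal{E}_f$ leaves $k$ "ends" — and it is this reformulation that $H$ preserves. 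Since the proposition explicitly says the (routine) proof is left to the reader, it suffices to record this reduction and the equilibrium-preservation observation; no delicate estimates are needed.
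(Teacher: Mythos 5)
The paper gives no argument for this proposition (it is declared obvious and left to the reader), so your proposal can only be measured against the intended, unstated one; your skeleton is certainly that argument. The observation that the conjugacy \eqref{Eq_Equiv_Rel-family} forces $H_\lambda$ to map equilibria of $S_\lambda(t)$ to equilibria of $\widehat{S}_{p(\lambda)}(t)$ (and conversely via $H^{-1}$, which has the same fibered form), so that $H$ restricts to a fiber-respecting homeomorphism of $\mathcal{E}_f$ onto $\mathcal{E}_{\widehat{f}}$, is correct and is the heart of the matter. Item (i) then follows from preservation of connectedness together with the remark that a disjoint union of at least two closed connected sets is disconnected, and your treatment of (F$_2$) via strict monotonicity of $p$ is fine (granting, as in all the paper's applications, that $\Lambda^*$ and $\widehat{\Lambda}^*$ are intervals, without which a homeomorphism $p$ need not be monotone).

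The soft spot is exactly where you locate it, but your proposed repair overclaims. You assert that the metric condition (F$_3$)/(G$_3$) --- every sufficiently small sphere $\partial\mathfrak{B}((\lambda^*,u^*);r)$ meets the local branch(es) in exactly $k$ points --- is \emph{equivalent} to a topological ``number of local ends'' condition. Only one implication holds: the metric condition implies the ends condition, but not conversely. An injective arc through a point whose distance to that point oscillates at all scales (for instance $r(s)=|s|\,(2+\sin(1/s))$ in polar-type coordinates) has two local ends, yet meets spheres of arbitrarily small radius in arbitrarily many points. Consequently, knowing that $H\circ\mu$ is a continuous local branch through $(p(\lambda^*),H_{\lambda^*}(u^*))$ with two ends does not produce a curve satisfying (F$_3$) there, and the asserted bijection between fold-point sets (resp.\ multiple-point sets) is not established; restricting $H\circ\mu$ to a subinterval does not help, since the oscillation persists on every subinterval. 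To close (ii) and (iii) one must either prove directly that the image point admits \emph{some} local branch in $\mathcal{E}_{\widehat{f}}$ verifying the sphere-cardinality condition, or read Definitions \ref{Def_Fold-point} and \ref{Def_Multiple-point} with (F$_3$)/(G$_3$) replaced by their homeomorphism-invariant reformulations --- in which case your argument is complete but proves a mildly different statement from the one literally written. This is arguably as much a defect of the definitions as of your proof, but as written the step ``hence fold-points correspond bijectively'' is not justified.
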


{\mkrevb We are now in position to formulate our notion of {\it topological robustness} to small perturbations for family of semigroups which may exhibit a non-global dissipative behavior.  In that respect, a first requirement that is needed in practice concerns the stability of the $(X;Y)$-compatibility of the underlying family of nonlinearities, in order to stay, loosely speaking, within the same functional setting when a perturbation is applied. This is formulated in the following definition. }

\begin{Def}\label{Def_struct-stab}
Let $\Lambda$ be a metric space and $I$ be an unbounded interval of
$\mathbb{R}$. Let $\mathcal{N}(I,\mathbb{R})$ be a set of functions from the interval $I$ to $\mathbb{R}$ {\mkr endowed with  a topology $\mathcal{T}$.}
Consider a family $\mathfrak{F}_f=\{f_{\lambda}\}_{\lambda \in
\Lambda^*}$ of $\mathcal{N}(I,\mathbb{R})$ which is
$(X;Y)$-compatible relatively to $\Lambda^* \subset \Lambda$.

{\mkrevb Let $\mathcal{P}$ be an open subset of $\mathcal{N}(I,\mathbb{R})$ for the $\mathcal{T}$-topology.  The family  $\mathfrak{F}_f$  is said to be $\mathcal{T}$-stable with respect to perturbations in $\mathcal{P}$, if  there exist an interval $\Lambda' \supseteq \Lambda^\ast$ 
and a neighborhood $\mathcal{U}'_{\lambda}$ of $f_{\lambda}$  in the $\mathcal{T}$-topology such that
for any neighborhood $\mathcal{U}_{\lambda} \subset
\mathcal{U}'_{\lambda}$, we have 

$$\Big( \;\widehat{f}_{\lambda} \in
\mathcal{U}_{\lambda} \mbox{ and } \widehat{f}_{\lambda} -f_{\lambda} \in \mathcal{P}, \, \lambda \in  \Lambda'\Big)\Rightarrow\Big(
\{\widehat{f}_{\lambda}\}_{\lambda \in \Lambda'} \mbox{ is 
}(X;Y)\mbox{-compatible relatively to }\Lambda'
\Big).$$
 }
\end{Def}

\begin{Ex}
{\mkrevb Let us consider  $\mathcal{N}((0,\infty),\mathbb{R})$ endowed with the $C^0$-topology $\mathcal{T}$ of uniform convergence over compact sets.  Let us consider $f_{\lambda}=\lambda g_{\epsilon}$, with $g_{\epsilon}(x)=\exp(x/(1+\epsilon x))$,  and   $\lambda\in \Lambda=\Lambda^\ast=(0,\infty)$. 

We saw in Example \ref{Rem_Arrenus} that the corresponding family, $\mathfrak{F}=\{f_{\lambda}\}_{\lambda \in\Lambda}$, is 
$(C_0^{0,2\alpha}([0,1]);C^{2}([0,1]))$-compatible relatively to
$\Lambda$ for $\alpha \in (\frac{1}{2},1)$ and $\epsilon >1/4$. 

Let $\mathcal{P}$ be the set of functions $\varphi$ with compact support such that $\widehat{g}:=g_{\epsilon}+\varphi$ is locally Liptchitz and  satisfies the rest of assumptions of  Proposition \ref{Prop_identif_admissible}. 
This set is non empty. Indeed, if we consider $0<m<M$, $r=\displaystyle  \lambda \frac{g_{\epsilon}(M)-g_{\epsilon}(m)}{M-m}$, and $\varphi$ given by
\bea
\varphi(x)&=r(x-m)+\lambda(g_{\epsilon}(m)-g_{\epsilon}(x)), \mbox{ for } x\in (m,M),\\
\varphi(x)&=0,  \mbox{ otherwise},
\eea
then the function $g_{\epsilon}+\varphi$ satisfies the desired assumptions. Furthermore this perturbation can be made as close as 
desired to  $g_{\epsilon}$ (in the aforementioned $C^0$-topology $\mathcal{T}$) by reducing the size of the interval  $(m,M)$, accordingly.

Now since the assumptions of Proposition \ref{Prop_identif_admissible} are satisfied for any $g_{\epsilon}+\varphi$ with $\varphi \in \mathcal{P}$, we conclude that $\mathfrak{F}'=\{\lambda (
g_{\epsilon} +\varphi)\}_{\lambda \in\Lambda}$ is 
$(C_0^{0,2\alpha}([0,1]);C^{2}([0,1]))$-compatible relatively to
$\Lambda'=(0,\infty)$ for $\alpha \in (\frac{1}{2},1)$. 
In other words,  $\mathfrak{F}$ is $\mathcal{T}$-stable with respect to perturbations in $\mathcal{P}$, for $\epsilon >1/4$. 

Note that in the proof of Corollary \ref{Main_cor} below, the family $\mathfrak{F}$ is shown  to be $\mathcal{T}$-stable for another class of perturbations than considered here, emphasizing thus that a given family can be $\mathcal{T}$-stable with respect to different type of perturbations. 
}
\end{Ex}

The desired notion of {\it topological robustness} to small perturbations and {\mkr the} related  notion of {\it topological instability} can be then formulated as follows. 

\begin{Def}\label{topo_robustness}
{\mkrevb Let us consider the setting of Definition  \ref{Def_struct-stab}. For each $\lambda$, one denotes by $\{S_{\lambda}(t)\}_{t\geq 0}$ (resp.~$\{\widehat{S}_{\lambda}(t)\}_{t\geq 0}$)
the semigroup acting on $D(f_{\lambda})$ (resp.~$D(\widehat{f}_{\lambda})$), given a function $f_{\lambda}$ (resp.~$\widehat{f}_{\lambda}$). One denotes also by $\mathfrak{S}_{f}$  and $\widehat{\mathfrak{S}}_{f}$
the corresponding family of semigroups generated respectively by \eqref{Eq_family-parab} and {\normalfont (}$\textcolor{blue}{\mathcal{P}_{\widehat{f}_{\lambda}}}${\normalfont )}. 

}

{\mkrevb In case where  $\mathfrak{F}_f$ is  $\mathcal{T}$-stable, we say furthermore  that $\mathfrak{S}_{f}$ is {\mkk $\mathcal{T}$-topologically robust} in $X$
with respect to perturbations in $\mathcal{P}$ for  the $\mathcal{T}$-topology, if there exists a
neighborhood $\mathcal{U}'_{\lambda}$ of $f_{\lambda}$ such that 
for any neighborhood $\mathcal{U}_{\lambda} \subset
\mathcal{U}'_{\lambda}$, we have over some interval $\Lambda' \supseteq \Lambda^\ast$,
\be\label{Eq_simple_equiv}
\Big(\widehat{f}_{\lambda} \in \mathcal{U}_{\lambda} \mbox{ and } \widehat{f}_{\lambda} -f_{\lambda} \in \mathcal{P} \Big)  \Rightarrow \Big(\mathfrak{S}_{\widehat{f}} \sim \mathfrak{S}_{f}\Big),
\ee
where  $\mathfrak{S}_{\widehat{f}} \sim \mathfrak{S}_{f}$ means  that $\mathfrak{S}_{\widehat{f}}$ and $\mathfrak{S}_{\widehat{f}}$ are topologically
equivalent in the sense of Definition \ref{DEF_Equiv_Rel-family}.}

 Given a  $\mathcal{T}$-stable  family $\mathfrak{F}_f$,   in case of violation of \eqref{Eq_simple_equiv},  then $\mathfrak{S}_{f}$ is {\mkrevb said to be topologically unstable  with respect to small perturbations in  $\mathcal{P} $ for the $\mathcal{T}$-topology.}
 \end{Def}


\section{Topologically unstable families of semilinear parabolic problems: Main result}\label{Sec_Main}

{\mkrevb We are now in position to formulate the main result of this article, Theorem \ref{THM_Main}, regarding the topological instability of a broad class of semilinear parabolic problems. As the proof will show, the abstract framework introduced in the previous section allows us to relate these instabilities to local deformations\textemdash of the $\lambda$-bifurcation diagram of the corresponding elliptic problems\textemdash which occur when appropriate small perturbations are applied to the nonlinear term.} 

Figure \ref{fig:schematic} below depicts some typical bifurcation diagrams for which  Theorem \ref{THM_Main} predicts the apparition of either a multiple-point or a new fold-point on it when the nonlinearity is appropriately perturbed. It is worth mentioning that the parabolic problems corresponding to such bifurcation diagrams allow for a possible mixed dynamical behavior composed by finitely many local attractors and unbounded trajectories, justifying the revision of the standard notion of structural stability such as proposed in Section \ref{Sec_top_rob}.

To prepare the proof  of Theorem \ref{THM_Main}, one first recall some standard results regarding the solution set
of,
\begin{equation}\label{Eq_elliptic-pb}
\left\{
\begin{array}{l}
-\Delta u =\lambda g(u),\;  \textrm{ in } \Omega,\; \lambda\geq 0,\\
\hspace{1em}u|_{\partial \Omega}=0,
\end{array}
\right.
\end{equation}
summarized into the Proposition \ref{Prop_classical-results} below. The proof of this proposition, based on the use of sub- and
super-solutions methods, can be found in \cite[Theorem 3.4.1]{Nao_book}.

\begin{proposition}\label{Prop_classical-results}
Consider a {\mk locally Lipschitz function} $g:[0,\infty)\rightarrow (0,\infty).$ Let
$\Omega$ be a bounded, connected and open subset of $\mathbb{R}^d$.
Then there exists $0<\lambda^*\leq \infty$ with the following
properties.
\begin{itemize}
\item[(i)] For every $\lambda \in [0,\lambda^*)$, there exists a
unique minimal solution $\underline{u}_{\lambda}\geq 0,$
$\underline{u}_{\lambda} \in H_0^1(\Omega) \cap L^{\infty}(\Omega)$
of \eqref{Eq_elliptic-pb}. The solution $\underline{u}_{\lambda}$ is
minimal in the sense that any supersolution $v\geq 0$ of
\eqref{Eq_elliptic-pb} satisfies $v\geq \underline{u}_{\lambda}$.
\item[(ii)]The map $\lambda \mapsto \underline{u}_{\lambda}$ is
increasing  from $(0,\infty)$ to $H_0^1(\Omega) \cap
L^{\infty}(\Omega)$.
\item[(iii)]If $\lambda^* < \infty$ and $\lambda > \lambda^*$, then
there is no solution of \eqref{Eq_elliptic-pb} in $H_0^1(\Omega)
\cap L^{\infty}(\Omega)$.
\end{itemize}
If $\Omega$ is furthermore connected, then $\lambda^*=\infty$ if
$\frac{g(u)}{u}\underset{u\rightarrow \infty} \longrightarrow 0$,
and $\lambda^*<\infty$ if $\underset{u\rightarrow \infty}\lim\inf
\frac{g(u)}{u}>0.$
\end{proposition}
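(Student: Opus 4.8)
The statement collects the classical facts on the branch of minimal solutions to $-\Delta u=\lambda g(u)$, $u|_{\partial\Omega}=0$, for $g$ positive and locally Lipschitz on $[0,\infty)$. The natural route is the sub- and super-solution method together with a monotone iteration scheme, and indeed the excerpt already points to \cite[Theorem 3.4.1]{Nao_book}; accordingly I would organize the proof around four steps: (a) construction of $\lambda^*$ and of the minimal solution for $\lambda\in[0,\lambda^*)$; (b) monotonicity in $\lambda$; (c) nonexistence beyond $\lambda^*$; (d) the dichotomy $\lambda^*=\infty$ versus $\lambda^*<\infty$ according to the behavior of $g(u)/u$ at infinity.

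\smallskip
\textbf{Step (a): existence of a threshold and of minimal solutions.} Define
$$\Lambda^*:=\{\lambda\ge 0\;:\;\eqref{Eq_elliptic-pb}\text{ admits a supersolution in }H_0^1(\Omega)\cap L^\infty(\Omega)\},$$
and set $\lambda^*:=\sup\Lambda^*$. First one checks $\Lambda^*$ is a nonempty interval containing $0$: the constant $0$ is a subsolution (since $g>0$ forces $-\Delta 0=0<\lambda g(0)$), and for small $\lambda$ a supersolution is obtained by solving the linear problem $-\Delta w=1$, $w|_{\partial\Omega}=0$, and rescaling — for $\lambda$ small enough $M\,w$ with suitable $M$ dominates $\lambda g$ on the relevant range because $g$ is bounded on bounded sets. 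That $\Lambda^*$ is an interval: if $\lambda_1\in\Lambda^*$ and $0\le\lambda_2<\lambda_1$, a supersolution for $\lambda_1$ is a supersolution for $\lambda_2$ since $g\ge 0$. Then for each $\lambda\in[0,\lambda^*)$, pick $\lambda<\lambda'\in\Lambda^*$ with supersolution $\overline u$; the monotone iteration starting from the subsolution $0$,
$$-\Delta u_{n+1}+K u_{n+1}=\lambda g(u_n)+K u_n,\qquad u_0=0,$$
with $K$ a Lipschitz constant for $g$ on $[0,\|\overline u\|_\infty]$, produces an increasing sequence $0=u_0\le u_1\le\cdots\le\overline u$ converging (monotonically and, by elliptic regularity, in $H_0^1\cap L^\infty$) to a solution $\underline u_\lambda$. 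Minimality: if $v\ge 0$ is any supersolution, then running the same iteration between $0$ and $v$ and using the comparison principle at each step gives $u_n\le v$ for all $n$, hence $\underline u_\lambda\le v$; this simultaneously gives uniqueness of the minimal solution.

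\smallskip
\textbf{Steps (b) and (c).} Monotonicity in $\lambda$ is immediate from minimality: for $0<\lambda_1<\lambda_2<\lambda^*$, the solution $\underline u_{\lambda_2}$ is a supersolution for the $\lambda_1$-problem (because $-\Delta\underline u_{\lambda_2}=\lambda_2 g(\underline u_{\lambda_2})\ge\lambda_1 g(\underline u_{\lambda_2})$), so $\underline u_{\lambda_1}\le\underline u_{\lambda_2}$; strict increase follows from the strong maximum principle applied to the (nonnegative, nonzero) difference. For (c): if $\lambda>\lambda^*$ admitted a solution $u\in H_0^1\cap L^\infty$, that $u$ would be a supersolution, placing $\lambda$ in $\Lambda^*$ and contradicting $\lambda>\sup\Lambda^*$.

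\smallskip
\textbf{Step (d), the dichotomy — and the main obstacle.} The genuinely substantive part is the characterization of when $\lambda^*$ is finite. If $\liminf_{u\to\infty}g(u)/u>0$, i.e. $g(u)\ge c\,u$ for $u$ large (hence, adjusting the constant, $g(u)\ge c\,u-C$ for all $u\ge 0$), then any solution satisfies $-\Delta u\ge\lambda c\,u-\lambda C$; testing against the first Dirichlet eigenfunction $\varphi_1>0$ of $-\Delta$ (with eigenvalue $\lambda_1(\Omega)>0$) gives $\lambda_1(\Omega)\int u\varphi_1\ge\lambda c\int u\varphi_1-\lambda C\int\varphi_1$, which is impossible once $\lambda c>\lambda_1(\Omega)$; hence $\lambda^*\le\lambda_1(\Omega)/c<\infty$. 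Conversely, if $g(u)/u\to 0$ as $u\to\infty$, one must produce a supersolution for every $\lambda>0$: given $\lambda$, sublinearity yields, for each $\varepsilon>0$, a constant $C_\varepsilon$ with $\lambda g(u)\le\varepsilon u+C_\varepsilon$; solving $-\Delta w=\varepsilon w+C_\varepsilon$, $w|_{\partial\Omega}=0$ is uniquely solvable with $w\ge 0$ once $\varepsilon<\lambda_1(\Omega)$, and then $w$ is a supersolution of the $\lambda$-problem since $-\Delta w=\varepsilon w+C_\varepsilon\ge\lambda g(w)$. Thus $\lambda\in\Lambda^*$ for all $\lambda$, so $\lambda^*=\infty$. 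I expect the bookkeeping in the converse direction — keeping the affine bounds uniform and ensuring the linear comparison problems are solvable with the right sign, i.e. staying below $\lambda_1(\Omega)$ — to be the only place demanding care; everything else is a direct application of the maximum principle and elliptic regularity. (One should also note the hypotheses as stated say ``connected'' twice; I would simply carry connectedness throughout, as it is exactly what powers the strong maximum principle used for strict monotonicity and for ruling out the trivial solution when $g(0)>0$.)
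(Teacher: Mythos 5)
Your overall route is exactly the one the paper intends: the paper does not prove Proposition \ref{Prop_classical-results} itself but defers to \cite[Theorem 3.4.1]{Nao_book}, whose argument is precisely the sub-/super-solution and monotone-iteration scheme you describe (your iteration also makes clear why local Lipschitz continuity suffices in place of $C^1$, which is the content of Remark \ref{Rem_regul_growth}). Steps (a), (b), (c) and the $\lambda^*=\infty$ half of (d) are correct, modulo standard bookkeeping (the constant $K$ must be taken large enough that $t\mapsto \lambda g(t)+Kt$ is nondecreasing on the full range $[0,\max(\|\overline u\|_\infty,\|v\|_\infty)]$ whenever you compare the iterates with a supersolution $v$, and the zeroth-order correction $+K$ is also needed when you invoke the strong maximum principle for strict monotonicity in $\lambda$, since $g$ is not assumed monotone).

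There is, however, one step that fails as written: the nonexistence half of (d). From $g(u)\ge cu-C$ the eigenfunction test gives
$\lambda_1(\Omega)\int u\varphi_1\ \ge\ \lambda c\int u\varphi_1-\lambda C\int\varphi_1$, i.e. $(\lambda c-\lambda_1(\Omega))\int u\varphi_1\le \lambda C\int\varphi_1$. For $\lambda c>\lambda_1(\Omega)$ this is not ``impossible''; it is merely an a priori bound on $\int u\varphi_1$, so no contradiction is reached and the claimed estimate $\lambda^*\le\lambda_1(\Omega)/c$ does not follow. The repair is short but requires using the strict positivity of $g$ rather than an affine minorant: since $g$ is continuous and positive, $m:=\min_{[0,A]}g>0$ where $A$ is such that $g(u)\ge cu$ for $u\ge A$; hence $g(u)\ge\delta u$ for \emph{all} $u\ge 0$ with $\delta=\min(c,\,m/A)$. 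Then $\lambda_1(\Omega)\int u\varphi_1\ge\lambda\delta\int u\varphi_1$ forces $\int u\varphi_1=0$, i.e. $u\equiv 0$, as soon as $\lambda\delta>\lambda_1(\Omega)$, which is impossible because $g(0)>0$. (Alternatively, combine your bound on $\int u\varphi_1$ with $\lambda_1(\Omega)\int u\varphi_1=\lambda\int g(u)\varphi_1\ge\lambda\,(\inf g)\int\varphi_1\to\infty$.) With that one-line correction the proposal is a faithful reconstruction of the cited proof.
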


\begin{Rmk}\label{Rem_regul_growth}
 \cite[Theorem 3.4.1]{Nao_book} is in fact proved for functions $g$ which are $C^1$ but it is not difficult to adapt the arguments to the case of locally Lipschitz functions.
\end{Rmk}

We are now in position to prove our main theorem.


\begin{theorem}\label{THM_Main}
Consider a {\mk locally Lipschitz}, and increasing function
$g:[0,\infty)\rightarrow (0,\infty).$ Let $\Omega$ be a bounded, connected and open
subset of $\mathbb{R}^d$, with either $d=1$ or $d=2$. Let
$\Lambda=[0,\infty)$ and let $\Lambda^*=[0,\lambda^*)$ with
$\lambda^*$ be as defined by Proposition
\ref{Prop_classical-results}. Assume that the solution set
\be
\mathcal{V}_g:=\{(\lambda,\phi)\in [0,\lambda^*)\times
C^{2,\alpha}(\overline{\Omega})\;:\;-\Delta \phi=\lambda g(\phi), \;
\phi|_{\partial \Omega}=0, \; \phi> 0 \mbox{ in } \Omega\},
\ee
 is well defined for some $\alpha \in (0,1)$ and is constituted by a
continuum without multiple-points on it. 

Assume furthermore that the  set of  
fold-points of $\mathcal{V}_g$ given by 
\be
\mathcal{F}:=\{(\lambda,u_{\lambda})\;:\;
(\lambda,u_{\lambda}) \mbox{ is a fold-point of } \mathcal{V}_g\},
\ee
satisfies one of the following conditions
\begin{itemize}
\item[(i)] $\mathcal{F}\neq \emptyset$, $0<\lambda_{\mathfrak{m}}:=\min\{\lambda\in (0,\lambda^*): \mathcal{F}_{\lambda}\neq \emptyset\}<\lambda^*$, and
\bes
\mathcal{V}_g\cap \Gamma^{-}_{\lambda_{\mathfrak{m}}}=\mbox{minimal branch of }  \mathcal{V}_g, 
\ees
where 
\be\label{Eq_Gamma-}
\Gamma_{\lambda_{\mathfrak{m}}}^-=\{(\lambda,\phi)\in
(0,\infty)\times C^{2,\alpha}(\overline{\Omega})\;:\; \lambda
<\lambda_{\mathfrak{m}}, \; \|\phi\|_{\infty} <
\|\underline{u}_{\lambda_{\mathfrak{m}}}\|_{\infty}\}.
\ee
\item[(ii)] $\mathcal{F} \neq \emptyset $ and there exits
$\lambda_{\sharp}\in(0,\lambda^*)$ for which there exists
$\{(\lambda,u_{\lambda})\}_{\lambda \in
(\lambda_{\sharp},\lambda^*)} \subset \mathcal{V}_g$ such that
$$\underset{\lambda\downarrow
\lambda_{\sharp}}\lim\|u_{\lambda}\|_{\infty}=\infty,$$
 with $\mathcal{V}_g\cap
\Gamma^{-}_{\lambda_{\sharp}}$=minimal branch of $\mathcal{V}_g.$
\item[(iii)] $\mathcal{F} =\emptyset $ and $\mathcal{V}_g$ is
constituted only by its minimal branch.
\end{itemize}

{\mkrevb 
One consider now  $\lambda_{\s}$  in $(0,\lambda^*),$ and given $\epsilon>0$, let $\mathcal{P}_{\epsilon}$ be the set of $C^1$-functions $\varphi:[0,\infty)\rightarrow (0,\infty)$ such that 
\be\label{cond1a}
\| \varphi \|_{\infty}
< \epsilon,
\ee
\be\label{cond2b}
 \mbox{supp}(\varphi)\subset (\|\underline{u}_{\lambda_{\s}}\|_{\infty}, \|\underline{u}_{\lambda_{\s}}\|_{\infty}+\epsilon),
 \ee

Let $\mathcal{P}=\cup_{\epsilon>0} \mathcal{P}_{\epsilon}$ and $\mathcal{T}$ be the $C^0$-topology of uniform convergence on compact sets.

Finally,  assume that the family of functions $\mathfrak{F}_g:=\{\lambda g\}_{\lambda \in
[0,\lambda^*)}$ is $(X;C^{2,\alpha}(\overline{\Omega}))$-compatible
relatively to $[0,\lambda^*)$ for some Banach space $X$, and that this family is $\mathcal{T}$-stable with respect to perturbations in $\mathcal{P}$. 

Let $\mathfrak{S}_g$ be the corresponding family of semigroups $\{S_{\lambda}(t)\}_{\lambda\in [0,\lambda^*)}$ associated with 
\bea\label{Eq_parab}
\partial_t u-\Delta u & =\lambda g(u), \; \mbox{ in } \Omega,\\
  u & =0, \quad\quad \mbox{ on } \partial\Omega.\;
\eea

Then $\mathfrak{S}_g$ is topologically unstable  with respect to small perturbations in  $\mathcal{P} $ for the $\mathcal{T}$-topology.

Furthermore, the perturbation $\varphi\in \mathcal{P} $  can be chosen  such that $\widehat{g}=g+\varphi$  is  increasing, 
 for which  $\mathcal{V}_{\widehat{g}}$ contains a multiple-point or a new fold-point compared with $\mathcal{V}_g$, for either $\lambda \in (0,\lambda_{\mathfrak{m}})$,  or $\lambda \in (0,\lambda_{\sharp})$, or $\lambda \in (0,\lambda^{*})$, depending on whether case (i), case (ii), or case (iii), is respectively concerned.

}

\end{theorem}

\begin{figure}[!hbtp]
   \centering
   \includegraphics[width=.8\textwidth,height=.35\textwidth]{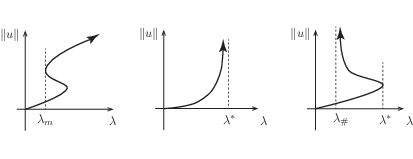}
  \vspace{-5ex}
  \caption{{\footnotesize {\bf Schematic of some typical situations dealt with Theorem \ref{THM_Main}}. The left panel corresponds to case (i), the right panel corresponds to case (ii), and the middle panel corresponds to case (iii). In each case, {\mkrevb either a multiple-point or a new fold-point} can be created (locally) by arbitrary small perturbations of the nonlinearity $g$ in \eqref{Eq_elliptic-pb}, as described in Theorem \ref{THM_Main}. The appearance of such singular points implies a topological instability \textemdash\, in the sense of Definition \ref{Def_struct-stab} \textemdash\,  of the one-parameter family of semigroups associated with the corresponding family of parabolic problems.}}   \label{fig:schematic}
\end{figure}

\begin{proof}
Let $\mathcal{V}_g$ be the solution set in $[0,\lambda^*)\times
C^{2,\alpha}(\overline{\Omega})$ of \eqref{Eq_elliptic-pb}, {\it
i.e.},
$$\mathcal{V}_g=\{(\lambda,u_{\lambda})\in [0,\lambda^*)\times
C^{2,\alpha}(\overline{\Omega})\;:\; \; -\Delta u_{\lambda}=\lambda
g(u_{\lambda}),u_{\lambda}>0\mbox{ in }
\Omega,\;u_{\lambda}|_{\partial \Omega}=0,\}.$$

First, note that by assumptions  on $\mathfrak{F}_g$,  we have for each $\lambda\in [0,\lambda^*)$ the existence of $D(\lambda g) \subset X$  such that 
 Eq. ~\eqref{Eq_parab} generates a semigroup acting on  $D(\lambda g)$; see Definition \ref{Def_Admissible}.
By introducing $\widetilde{D}(\lambda g)=D(\lambda g)\cap \{\phi > 0\mbox{ in } \Omega\}$, we can still
define a semigroup $\{S_{\lambda}(t)\}_{t \geq 0}$ acting on
$\widetilde{D}(\lambda g),$ due to the maximum principle.

Let us recall now {\mkrevb the implications of \cite[Theorem 1.2]{Nao}.  The latter theorem takes place in dimension one or two. It ensures  the existence of a locally Lipschitz, positive and increasing function $\widehat{g}$ that can be chosen arbitrarily close to $g$ in the {\mkrevb $C^0$-topology of uniform convergence on compact sets,} and for which  the branch of
minimal positive solutions, $\lambda \mapsto \underline{\widehat{u}}_{\lambda}$, of } 
\begin{equation}\label{Eq_Perturb-pb}
\left\{
\begin{array}{l}
-\Delta u =\lambda  \widehat{g}(u),\;  \;  \; u> 0 \; \mbox{ in } \Omega,\\
\hspace{1em}u|_{\partial \Omega}=0,
\end{array}
\right.
\end{equation}
undergoes a discontinuity of first kind, as a map from $(0,\widehat{\lambda}^*)$
to $ C^{2,\alpha}(\overline{\Omega})$.\footnote{In \cite{Nao} the
authors have proved the existence of such a discontinuity in the
$L^{\infty}(\Omega)$-norm for solutions considered in
$C^2(\overline{\Omega})$ which is therefore valid {\mk for solutions considered in $
C^{2,\alpha}(\overline{\Omega})$}. {\mkrevb Their proof has been also done for $C^1$ functions $g$, but can be adapted to the case of locally  Lipschitz functions  since only the monotony property of the minimal branch is needed from that assumption; see also Remark \ref{Rem_regul_growth}.}}

More precisely, let $\lambda_{\s}$ be chosen in $(0,\lambda^*).$ Given $\epsilon >0$,
 \cite[Theorem 1.2]{Nao} ensures the  existence of  {\mkr an increasing {\mkrevb locally Lipschitz}
positive function $\widehat{g}$, such that the following conditions hold:}
\be\label{cond1}\tag{H$_1$} 
\|g-\widehat{g}\|_{\infty}
\leq \epsilon,
\ee
\be\label{cond2}\tag{H$_2$} 
 \mbox{supp}(g-\widehat{g})\subset (\|\underline{u}_{\lambda_{\s}}\|_{\infty}, \|\underline{u}_{\lambda_{\s}}\|_{\infty}+\epsilon),
 \ee
for which  the following set 
 \bes
 \mathcal{M}=\{\underline{\widehat{u}}_{\lambda}, \; \lambda\in \widehat{\Lambda}^{*}\},
 \ees
is constituted by minimal solutions of \eqref{Eq_Perturb-pb} over an interval $ \widehat{\Lambda}^{*}:=(0, \widehat{\lambda}^*)$ such that 
 \be \label{cond3}\tag{H$_3$} 
 \widehat{\lambda}^* > \lambda_{\s}, \; 
\underline{\widehat{u}}_{\lambda}=\underline{u}_{\lambda}  \mbox{ for }
\lambda\in(0,\lambda_{\s}), \mbox{ and }   \lambda \mapsto \underline{\widehat{u}}_{\lambda} \mbox{ is discontinuous on }
(\lambda_{\s}, \lambda_{\s}+\epsilon).
\ee

{\mkrev Conditions \eqref{cond1}-\eqref{cond2} indicate that the  perturbation $\widehat{g}(x)$ of $g(x)$ is localized for the $x$-values located near $\|\underline{u}_{\lambda_{\s}}\|_{\infty}$ for some $\lambda_{\s}$, and Condition \eqref{cond3} expresses that such a perturbation  generates a discontinuity near $\lambda_{\s}$ on the minimal branch associated with \eqref{Eq_Perturb-pb}.}

{\bf Case (i)}. We consider
\bes
\mathcal{F}=\{(\lambda,u_{\lambda})\;:\;
(\lambda,u_{\lambda}) \mbox{ is a fold-point of } \mathcal{V}_g\},
\ees
and assume first that $\mathcal{F}\neq \emptyset$ and that the
condition (i) such as formulated  in the statement of the theorem, is satisfied.

Let us choose $\epsilon>0$
and $\lambda_{\s}$ such that, 
\be\label{Eq_lambda_m}
0<{\mkr \lambda_{\s}+2\epsilon}\leq\lambda_{\mathfrak{m}}:=
\min\{\lambda\;:\;(\lambda,u_{\lambda})\in \mathcal{F}\}
\ee
and such that
\be \label{Eq_lambda_m2}
 \|\underline{u}_{\lambda_{\s}}\|_{\infty}+\epsilon <\|\underline{u}_{\lambda_{\mathfrak{m}}}\|_{\infty}. 
 \ee
The latter is possible by monotony of the minimal branch; see Proposition \ref{Prop_classical-results}.

For this choice of 
$\lambda_{\s}$ and $\epsilon$, and for the corresponding perturbation
$\widehat{g}$ of $g$ verifying Conditions \eqref{cond1}-\eqref{cond3},  similar topological degree arguments  (Theorem
\ref{THM_global_unbounded}) to those provided for the  Gelfand problem  \eqref{Eq_Gelfand-perturb} in Section 
\ref{Sec_perturbed-Gelfand}, ensure the existence of unbounded continuum in $\widehat{\Lambda}^* \times V$, with  
here $V=C^{2,\alpha}(\overline{\Omega})$.

Let $\lambda_{\c}\in (\lambda_{\s}, \lambda_{\s}+\epsilon)$ be the {\mkr critical}
parameter value at which the {\it discontinuity} of the minimal branch, $\lambda
\mapsto\underline{\widehat{u}}_{\lambda}$, takes place. Let $\widehat{\mathcal{C}}$ be  the
unbounded continuum of $\mathcal{V}_{\widehat{g}}$ which contains
$(0,0_V)$.  By construction of $\widehat{g}$ and assumption on
$\mathcal{V}_g$,  we deduce that 
\be\label{Eq_multiple}
\widehat{\mathcal{C}}\cap
\Gamma_{\lambda_{\s}}^-=\{(\lambda,\underline{u}_{\lambda})\}_{\lambda <
\lambda_{\s}},
\ee 
where $\Gamma_{\lambda_{\s}}^-$  is defined as in Eq.~\eqref{Eq_Gamma-}, by replacing $\lambda_{\mathfrak{m}}$ with 
$\lambda_{\s}$. Hereafter, we define similarly the set $\Gamma_{\lambda_{\c}}^-$.

Assume first that,
\bes
\{(\lambda,\underline{\widehat{u}}_{\lambda})\}_{\lambda <
\lambda_{\c}}\varsubsetneq\widehat{\mathcal{C}}\cap
\Gamma_{\lambda_{\c}}^-.
\ees
{\mkrevb Then because of \eqref{Eq_multiple} and the definition of $\Gamma_{\lambda_{\c}}^-$, the solution set $\widehat{\mathcal{C}}$  contains solutions $\phi_{\lambda}$ of Eq.~\eqref{Eq_Perturb-pb} such that  $\|\phi\|_{\infty}<\|\underline{\widehat{u}}_{\lambda}\|_{\infty}$ for  $\lambda_{\s}\leq \lambda <
\lambda_{\c}$. Given the continuum property of $\widehat{\mathcal{C}}$, such a subset of solutions form a branch that necessarily intercepts the set}
\bes
\{(\lambda,\underline{\widehat{u}}_{\lambda})\}_{\lambda_{\s}\leq \lambda <
\lambda_{\c}},
\ees 
at some point
$(\lambda,\underline{\widehat{u}}_{\lambda})$ for $\lambda \in
[\lambda_{\s}, \lambda_{\c})$, leading to the existence of a {\it multiple-point} of $\mathcal{V}_{\widehat{g}}$ which turns out to be a signature
of topological instability of $\mathfrak{S}_g$ {\mkr according  to Proposition \ref{Prop_criteria-noneq}-(iii) and  to the assumption made on
$\mathcal{V}_g$.}

{\mkr Consider  now the case where}
\be
\{(\lambda,\underline{\widehat{u}}_{\lambda})\}_{\lambda <
\lambda_{\c}}=\widehat{\mathcal{C}}\cap \Gamma_{\lambda_{\c}}^-.
\ee
{\mkr A more careful analysis is here required to conclude to the topological instability of $\mathfrak{S}_g$.}

{\mkr First, let us note that standard 
compactness arguments allow us to conclude to the existence of  a 
sequence $\{\lambda_k\}$, such that} 
\bes
v_{\lambda_{\c}}:=\underset{\lambda_k\uparrow\lambda_{\c}}\lim\underline{\widehat{u}}_{\lambda_k} \mbox{ exists},
\ees
and {\mkr such} that this limit is a solution of
\eqref{Eq_Perturb-pb} for $\lambda=\lambda_{\c}$.

{\mkr This solution} has to be the minimal solution at
$\lambda_{\c}$ since from the construction of \cite{Nao}, we deduce
\be\label{Eq_control_left-right}
\underset{\lambda \uparrow \lambda_{\c}}\lim\;
\|\underline{\widehat{u}}_{\lambda}\|_{\infty}< \underset{\lambda
\downarrow \lambda_{\c}}\lim\;
\|\underline{\widehat{u}}_{\lambda}\|_{\infty}. 
\ee

Therefore,
\be
v_{\lambda_{\c}}=\underline{\widehat{u}}_{\lambda_{\c}}  \mbox{ and }
(\lambda_{\c},\underline{\widehat{u}}_{\lambda_{\c}})\in
\widehat{\mathcal{C}}.
\ee

Denote by $A_{\lambda_{\c}}^+$ the point
$(\lambda_{\c},\underset{\lambda \downarrow
\lambda_{\c}}\lim\; \underline{\widehat{u}}_{\lambda})$ which exists
from same arguments of compactness.  Similarly, we get that
$A_{\lambda_{\c}}^+=(\lambda_{\c},
\widehat{u}_{\lambda_{\c}}^+)$ for some
$\widehat{u}_{\lambda_{\c}}^+\in \mathcal{V}_{\widehat{g}}$.

Since $\widehat{u}_{\lambda_{\c}}^+=\underset{\lambda \downarrow
\lambda_{\c}}\lim\; \underline{\widehat{u}}_{\lambda}$, and 
$\lambda_{\c}<\lambda_{\mathfrak{m}}$ by construction, and since the map
$\lambda\mapsto \underline{\widehat{u}}_{\lambda}$ is increasing from
Proposition \ref{Prop_classical-results}-(ii), we infer that
necessarily,
\be\label{Eq_domination}
\|\widehat{u}_{\lambda_{\c}}^+\|_{\infty}<\|\underline{u}_{\lambda_{\mathfrak{m}}}\|_{\infty}.
\ee
{\mkrev ln other words,  the right-hand limit at the critical parameter value $\lambda_{\c}$ of the minimal solutions to the perturbed problem \eqref{Eq_Perturb-pb}, comes with less energy than the energy of the first fold-point\footnote{i.e.~the first fold-point met as $\lambda$ is increased from $0$.}  associated with the unperturbed problem \eqref{Eq_elliptic-pb}.}

 Since $\widehat{\mathcal{C}}$ is unbounded in
$\Lambda\times V,$ either 
$(\lambda_{\c},\underline{\widehat{u}}_{\lambda_{\c}})$ is a
fold-point of $\widehat{\mathcal{C}}$ that lives thus according to \eqref{Eq_domination} in $\Gamma_{\lambda_{\mathfrak{m}}}^-$, or 
$(\lambda_{\c},\underline{\widehat{u}}_{\lambda_{\c}})$ is not a
fold-point of $\widehat{\mathcal{C}}$ and $\widehat{\mathcal{C}} \cap \Gamma_{\lambda_{\c},\gamma}^+\neq
\emptyset$ for all $\gamma>0$, where
\bes
\Gamma_{\lambda_{\c},\gamma}^+:=\{(\lambda,v)\in \Lambda \times
V\;:\; \lambda>\lambda_{\c}, \;
\|v-\underline{\widehat{u}}_{\lambda_{\c}}\|_V < \gamma\}. 
\ees

Let us show that the second option of this alternative does not hold. 
By contradiction, assume that $\widehat{\mathcal{C}} \cap
\Gamma_{\lambda_{\c}}^+\neq \emptyset$ for all $\gamma>0$ and that 
$(\lambda_{\c},\underline{\widehat{u}}_{\lambda_{\c}})$ is not a
fold-point of $\widehat{\mathcal{C}}$, then condition (F$_2$) of Definition
\ref{Def_Fold-point} is violated and therefore any local continuous
map given for some $\theta>0$ as,
\bes
\mu:s\in (-\theta,\theta) \mapsto (\lambda(s),v(s)), 
\ees
and such that for all $s\in (-\theta,\theta)$, 
$(\lambda(s),v(s)) \in \widehat{C}$ with
$(\lambda(0),v(0))=(\lambda_{\c},\underline{\widehat{u}}_{\lambda_{\c}})$,
comes with its underlying map 
\bes 
s\mapsto\lambda(s),
\ees
that does not attain its maximum  at 
$s=0$.

{\mkr Recall from Eq.~\eqref{Eq_control_left-right} that
\be
\|\widehat{u}_{\lambda_{\c}}\|_{\infty}<\|\widehat{u}_{\lambda_{\c}}^+\|_{\infty}.
\ee
}
Then by continuity of the map $\mu$ there
exists $0<\beta\leq \theta$ such that $s\mapsto\lambda(s)$ is
strictly increasing on $(0,\beta)$ and such that 
\be
\|v(s)\|_{\infty}<\|\widehat{u}_{\lambda_{\c}}^+\|_{\infty}, \; \forall\, s\in(0,\beta).
\ee

This last inequality  is in contradiction with the minimality property of the
branch $\lambda\mapsto \underline{\widehat{u}}_{\lambda}$ and the fact that,  by construction of $\widehat{u}_{\lambda_{\c}}^+$, 
$\|\underline{\widehat{u}}_{\lambda}\|_{\infty}\geq
\|\widehat{u}_{\lambda_{\c}}^+\|_{\infty}$ for any $\lambda>
\lambda_{\c}$ such that $\lambda-\lambda_{\c}$ is small
enough. 

{\mkr Thus, the second part of the aforementioned  alternative does not hold which implies that} $(\lambda_{\c},\underline{\widehat{u}}_{\lambda_{\c}})$ is a
fold-point of $\widehat{\mathcal{C}}$ that lives  according to \eqref{Eq_domination} in  $\Gamma_{\lambda_{\mathfrak{m}}}^-$. 
By definition of $\lambda_{\mathfrak{m}}$ in \eqref{Eq_lambda_m}, no fold-point exists in $\Gamma_{\lambda_{\mathfrak{m}}}^-$ for $ \mathcal{V}_g$. On the other hand, recall that by construction of $\widehat{g}$ satisfying  \eqref{cond1}-\eqref{cond3} for $\epsilon$ and $\lambda_{\s}$ satisfying \eqref{Eq_lambda_m}-\eqref{Eq_lambda_m2}, one has  that $g(x)=\widehat{g}(x)$ for $x>\|\underline{u}_{\lambda_{\mathfrak{m}}}\|_{\infty}$ and hence
\be
\mathcal{V}_{\widehat{g}}\cap
\Gamma_{\lambda_{\mathfrak{m}}}^+=\mathcal{V}_{g}\cap \Gamma_{\lambda_{\mathfrak{m}}}^+,
\ee
where
\be
\Gamma_{\lambda_{\mathfrak{m}}}^+:=\{(\lambda,\phi)\in
(0,\infty)\times C^{2,\alpha}(\overline{\Omega})\;:\; \lambda
>\lambda_{\mathfrak{m}}, \; \|\phi\|_{\infty} >
\|\underline{u}_{\lambda_{\mathfrak{m}}}\|_{\infty}\}.
\ee

{\mkr As a consequence,} the set of fold-points in $\Gamma_{\lambda_{\mathfrak{m}}}^+$ of
$\mathcal{V}_{\widehat{g}}$ and $\mathcal{V}_{g}$ are identical. We
have just proved the existence of a fold-point of
$\mathcal{V}_{\widehat{g}}$ in $(0,\lambda_{\mathfrak{m}})\times X$ which no longer
exists \textemdash\, in an homeomorphic sense \textemdash\, on $\mathcal{V}_{g}$ by
definition of $\lambda_{\mathfrak{m}}$. From Proposition
\ref{Prop_criteria-noneq}-(i), we conclude that $\mathfrak{S}_g$ and
$\mathfrak{S}_{\widehat{g}}$ are thus not topologically equivalent.

{\bf Case (ii)}. The proof follows the same lines than above by working with $(0,\lambda_{\sharp})$ instead of  $(0,\lambda_{\mathfrak{m}})$, and by localizing the perturbation on $\widehat{\mathcal{C}}\cap
\Gamma^-_{\lambda_{\sharp}}$.

{\bf Case (iii)}. If $\mathcal{F}=\emptyset$, $\lambda_{\s}$ may be
chosen arbitrary in $(0,\lambda^*)$, and we can proceed as above
to create a fold-point of $\mathcal{V}_{\widehat{g}}$ whereas
$\mathcal{V}_{g}$ does not possess any fold-point
($\mathcal{F}=\emptyset$). 
\medskip 

In all the cases, we are thus able to exhibit for any 
$\epsilon>0$, a perturbation $\widehat{g}$ for which 
$\|g-\widehat{g}\|_{\infty} \leq \epsilon$ while  $\mathfrak{S}_g$ and
$\mathfrak{S}_{\widehat{g}}$ are not topologically equivalent. We have thus proved that $\mathfrak{S}_g$ is
{\mkk topologically} unstable in the sense of Definition
\ref{Def_struct-stab}. The proof is complete.
\end{proof}


\begin{Rmk}\label{Rmk_linearization}
If one assumes $g$ to be $C^1$ {\mkrevb instead of locally
Liptchitz, and assumes also  
$(\lambda_{\c},\underline{\widehat{u}}_{\lambda_{\c}})$ used in the proof above},  
to be degenerate in the sense that
\bes
\lambda_1(-\Delta -\lambda_{\c}g'(\underline{\widehat{u}}_{\lambda_{\c}})I)=0, 
\ees
and the linearized equation has a nontrivial solution,  then under further assumptions on $g$ and  appropriate {\it a priori} bounds,  the
existence of a fold-point at $(\lambda_{\c},\underline{\widehat{u}}_{\lambda_{\c}})$ can be guaranteed by using e.g.~ \cite[Theorem 1.1]{Crandall_Rabinowitz'75}; see also \cite{Crandall_Rabinowitz'73,OS99}.

The regularity
assumption on $g$ in Theorem \ref{THM_Main} prevents the use of such linearization techniques. 
Note that parabolic problems with locally Lipschitz nonlinearities are commonly encountered in energy balance models \cite{Roques_al14} and in some population dynamics models \cite{CPT16,RC07}. 

 Theorem \ref{THM_global_unbounded}  serves here as  a substitutive ingredient to cope with the lack of regularity caused by our assumptions on $g$.  It is however unclear how to weaken further these assumptions, since the proof of Theorem \ref{THM_Main} provided above  has made a substantial use of the growth property of the minimal branch such as recalled in Proposition \ref{Prop_classical-results} above; see also Remark \ref{Rem_regul_growth}.
\end{Rmk}

{\mkrevb We conclude this section by an application  to the parabolic version of the  perturbed Gelfand problem \eqref{Eq_Gelfand-perturb} discussed  in Section \ref{Sec_topo_rob_mother}. }

{\mkrevb 

\begin{Cor}\label{Main_cor}
Let  $\lambda>0$ and $\epsilon >1/4$.

Let $\mathfrak{S}_g$ be the family of semigroups $\{S_{\lambda}(t)\}_{\lambda>0}$ defined on $D(f_{\lambda})$ given by \eqref{Domain_Semi-group}, associated with 
\begin{equation}\label{Eq_Gelfand-perturb2}
\left\{
\begin{array}{l}
\displaystyle  \partial_t u -\partial_{xx}^2 u = \lambda \exp\Big(\frac{u}{1+\epsilon u}\Big), \;\textrm{ in }I=(-1,1),\\
\hspace{1.8em} u(-1) =u(1)=0, 
\end{array}\right.
\end{equation}

Let $\mathcal{T}$ and  $\mathcal{P}$  be as in Theorem \ref{THM_Main}.

Then $\mathfrak{S}_g$ is  topologically unstable with respect to small perturbations in  $\mathcal{P} $ for the $\mathcal{T}$-topology.\end{Cor}

\begin{proof}
 Let us consider $f_{\lambda}=\lambda g$, with $g(x)=\exp(x/(1+\epsilon x))$,  and   $\lambda\in \Lambda=(0,\infty)$.
From Proposition \ref{Prop_classical-results}, $\lambda^\ast=\infty$ and therefore $\Lambda^\ast=(0,\infty).$

From Example \ref{Rem_Arrenus}, we know that   $\mathfrak{F}=\{f_{\lambda}\}_{\lambda \in\Lambda}$ is $(C_0^{0,2\alpha}(I);C^{2}(I))$-compatible relatively to $\Lambda$ for $\alpha \in (\frac{1}{2},1)$ and $\epsilon >1/4$.  

Let $\alpha$ be fixed in $(\frac{1}{2},1)$ and $\epsilon>1/4.$  By application of Proposition \ref{Prop_identif_admissible},  we know also that case (iii) of Theorem \ref{THM_Main}  holds here.
It remains to check that $\mathfrak{F}$ is $\mathcal{T}$-stable with respect to perturbations in $\mathcal{P}$, namely that 
the family 
$\{\lambda (g+\varphi)\}_{\lambda \in \Lambda'}$ is 
$(C_0^{0,2\alpha}(I);C^{2}(I))$-compatible relatively to $\Lambda'=(0,\infty)$, for $\varphi \in \mathcal{P}$ sufficiently small. 

Since $\varphi$ is C$^1$ and with compact support, there exists
$C>0$ such that   $g(x)+\varphi(x) \leq C (1+x)$, for all $x\geq 0$. 
 The theory of analytic semigroups guarantees then the existence  of a semigroup $\widehat{S}_{\lambda}(t)$ defined, for each $\lambda>0$, on
\begin{equation}\label{Domain_Semi-groupb}
D(\lambda(g+\varphi)):=\{u_0\in   C_0^{0,2\alpha}(I)\;:\;
\underset{t>0}\sup\;\|\widehat{u}_{\lambda}(t;u_{0})\|_{C^{2}(I)}<\infty\},
\end{equation}
where $\widehat{u}_\lambda(t;u_0)$ denotes the unique solution of $\partial_t u-\partial_{xx}^2 u= \lambda (g(u) +\varphi(u))$, with $u(-1)=u(1)=0$, and emanating from $u_0\in C_0^{0,2\alpha}(I)$; see e.g.~\cite[Props.~
6.3.5. and 6.3.8]{Lunardi_Lecture-notes}.  Thus, Condition (i) of Definition \ref{Def_Admissible} is satisfied for $g+\varphi$. 

From the assumptions on $\varphi$, the method of super- and subsolutions (see e.g.~\cite[Chap.~3]{Nao_book}) allows us to show that   Condition (ii) of Definition \ref{Def_Admissible} is satisfied for $g+\varphi$. Indeed, since $\varphi\geq 0$, any solution of $-\partial_{xx}^2 u = \lambda g(u)$  (under Dirichlet conditions) produces  a subsolution $\underline{v}$ (in $C^{2}(I)$) of \eqref{Eq_Perturb-pb} with $\widehat{g}=g+\varphi$. Recall now that the minimal branch of \eqref{Eq_Gelfand-perturb} is an increasing function of $\lambda$ (see Proposition \ref{Prop_classical-results} (ii))  that coincides with the the solution set of \eqref{Eq_Gelfand-perturb} for $\epsilon>1/4$. As a consequence,  given $\lambda>0$, any solution of $-\partial_{xx}^2 u = (\lambda +\gamma) g(u)$ for $\gamma$ sufficiently large provides  a supersolution $\overline{v}$ of \eqref{Eq_Perturb-pb}   for which $\overline{v}\geq \underline{v}$. The existence of a solution to \eqref{Eq_Perturb-pb} with $\widehat{g}=g+\varphi$ follows then from a classical iteration method.

Finally, any solution in $C^{2}(I)$ of  $-\partial_{xx}^2 u = \lambda (g+\varphi)(u)$, under Dirichlet conditions, is clearly an equilibrium of $\widehat{S}_{\lambda}$.   The perturbation $\varphi$ being allowed to be arbitrarily small in $\mathcal{T}$, we have thus proved that $\mathfrak{F}$ is $\mathcal{T}$-stable with respect to perturbations in $\mathcal{P}$. The application of Theorem \ref{THM_Main}  concludes the proof.

\end{proof}

}

\section{Numerical results}\label{Sec_num}
In this section we complete the theoretical results of Section \ref{Sec_Main} by numerical simulations.
We consider the following Gelfand problem
\begin{equation}\label{Eq_Gelfand-perturb3}
\left\{
\begin{array}{l}
\displaystyle  \partial_t u -\nu\partial_{xx}^2 u = \lambda \exp\Big(\frac{u}{1+\epsilon u}\Big)=\lambda g(u), \;\textrm{ in }I=(0,1),\\
\hspace{1.8em} u(0) =u(1)=0,
\end{array}\right.
\end{equation}
with $\nu=0.01$ and $\epsilon=0.4$.

The nonlinearity $g$ is subject to the following small Gaussian perturbations of the form 
\be\label{perturb_Gaussian}
\varphi(y)=\epsilon_1 \exp\Big(-\frac{\beta}{\epsilon_1}\big(y-\|u_{\lambda_\s}\|_{\infty}\big)^2\Big),
\ee
with $\epsilon_1=0.75$ and $\beta=20$, and where $u_{\lambda_s}$ denotes the (unique) stationary solution of \eqref{Eq_Gelfand-perturb3} for $\lambda=\lambda_s=0.11$. Note that $\|\varphi\|_{\infty}\leq \eps_1.$

The goal is to numerically illustrate that the perturbed problem
\begin{equation}\label{Eq_Gelfand-perturb4}
\left\{
\begin{array}{l}
\displaystyle  \partial_t u -\nu\partial_{xx}^2 u =\lambda (g(u)+\varphi(u))=\lambda \widehat{g}(u), \;\textrm{ in }I=(0,1),\\
\hspace{1.8em} u(0) =u(1)=0,
\end{array}\right.
\end{equation}
is topologically non-equivalent to \eqref{Eq_Gelfand-perturb3}. Since the perturbation $\varphi$ given by \eqref{perturb_Gaussian} does not fall within the set of perturbations $\mathcal{P}$ considered in Theorem \ref{THM_Main}, the numerical results shown hereafter strongly suggest  that the topological instability of problems such as \eqref{Eq_Gelfand-perturb3} is not limited to perturbations in $\mathcal{P}$.

The (locally) stable stationary solutions of  either \eqref{Eq_Gelfand-perturb3} or \eqref{Eq_Gelfand-perturb4} are approximated from a standard explicit finite differentiation with a number of grid points  sets to $N_x=100$, and a time increment sets to $\delta t=10^{-3}$. A total of $10^5$ iterations has been used.
For  either \eqref{Eq_Gelfand-perturb3} or \eqref{Eq_Gelfand-perturb4}, the computation of the minimal branch  is obtained by integration from the following square wave function
\be
u_0(x)=\left\{
\begin{array}{l}
0.5, \mbox{ if } x\in[\frac{1}{4},\frac{3}{4}],\\
0,  \mbox{ else. }
\end{array}\right.
\ee

In both cases, $\lambda$ runs from $\lambda_1=0.01$ to $\lambda_2=0.2$ with increment $\delta \lambda=5.10^{-5}$.
For each $\lambda$, the upper branch of stationary solutions of the perturbed branch (red curve on Fig.~\ref{fig:numerics}) is obtained by integration of 
\eqref{Eq_Gelfand-perturb4} from 
\be
u_0(x)=\left\{
\begin{array}{l}
\|u_{\lambda}\|_{\infty}+0.1, \mbox{ if } x\in[0.2,0.8],\\
0,  \mbox{ else, }
\end{array}\right.
\ee
where $u_{\lambda}$ denotes the stationary solution of  \eqref{Eq_Gelfand-perturb3}. A standard method of continuation 
has been used for computing the unstable branch.

\begin{figure}[!hbtp]
   \centering
   \includegraphics[width=.8\textwidth,height=.5\textwidth]{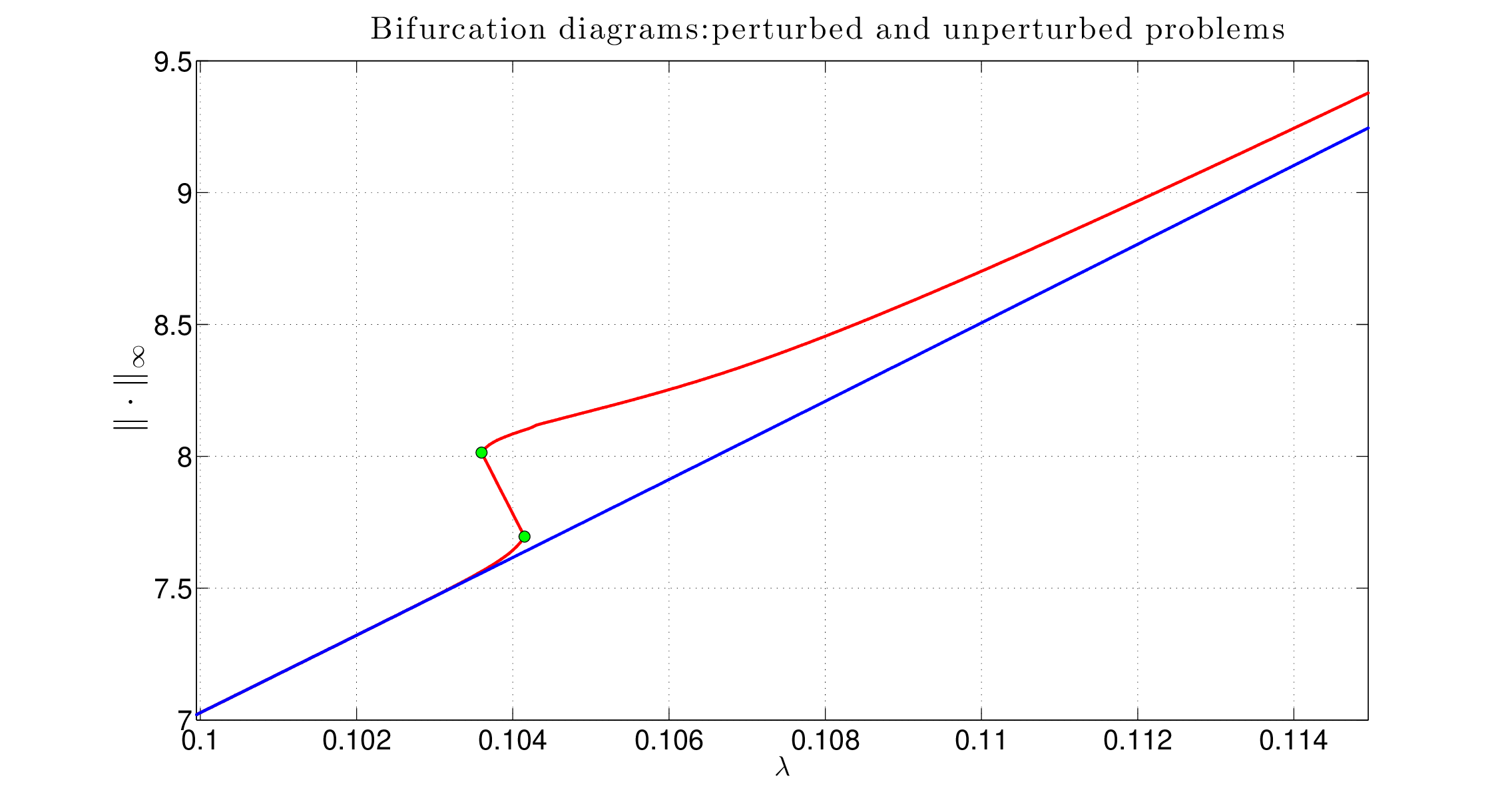}
  \caption{{\footnotesize  Bifurcation diagrams for the perturbed problem (red curve) and the unperturbed one (blue curve). The fold-points are indicated by the green dots.}   }
  \label{fig:numerics}
\end{figure}

The results are shown in Fig.~\ref{fig:numerics}.  Compared to the set of stationary solutions associated with   \eqref{Eq_Gelfand-perturb3} (blue curve), the set of stationary solutions associated with   \eqref{Eq_Gelfand-perturb4} (red curve) exhibits two fold-points (green dots). Figure \ref{fig:numerics}  represents actually a magnification of the discrepancies between these two solution sets. It has indeed been observed that the distance between the red and blue curves decays to zero  (not shown) as $\lambda$ gets larger from its critical value $\lambda_{\c}$ at which a discontinuity of the minimal branch occurs.

It is interesting to remark that $\lambda_{\c}$  is here slightly bigger than $\lambda=0.104$ but smaller than $\lambda_{\s}=0.11$, contrarily to Property \eqref{cond3} satisfied for a perturbation in $\mathcal{P}$ from Theorem \ref{THM_Main}. 
Here  $\lambda_{\s}$ corresponds to the parameter value from which the Gaussian perturbation $\varphi$ has been centered via $\|u_{\lambda_\s}\|_{\infty}$, whereas for a perturbation in $\mathcal{P}$, $\|u_{\lambda_\s}\|_{\infty}$  corresponds to a lower bound of the support of the  perturbation.

 It has been finally observed numerically that the emergence of fold-points such as reported on Fig.~\ref{fig:numerics}, persists when the perturbation $\varphi$ from \eqref{perturb_Gaussian} is still employed while $\eps_1>0$ is further reduced. 
The rigorous justification of this observation boils down again essentially  to an understanding of the mechanism at the origin of a discontinuity in the minimal branch, when this time a perturbation such as given in \eqref{perturb_Gaussian} is applied. We leave this issue for a future research, pointing out in the concluding remarks below a key element for the creation of such a discontinuity from the perturbation techniques of \cite{Nao}.

 \section{Concluding remarks}\label{concl_rem}
 The creation of a discontinuity in the minimal branch by arbitrarily small perturbations of the nonlinearity, has played a crucial role in the proof of Theorem \ref{THM_Main}. 
 This is made possible when the spatial dimension is equal to one or two, due to the following observation regarding a specific Poisson equation used in the {\mkrevb perturbation techniques of  \cite{Nao}.}

 Given $r>0$, one denote by $B_r$ the open ball of $\mathbb{R}^d$ of radius $r$, centered at the origin.  For $0<\rho<R$, the solution $\Psi_{\rho}$ of the following Poisson equation 
\begin{equation}\label{Eq_Poisson}
 \left\{
\begin{array}{l}
-\Delta \Psi_{\rho} =1_{B_{\rho}} ,  \textrm{ in } B_R,\\
\; \Psi_{\rho} |_{\partial B_R}=0, 
 \end{array}
 \right.
\end{equation}
satisfies  for $\rho <R/2$,
\be
\underset{B_{2\rho}}\inf \Psi_{\rho}= \rho^2 K(\rho),
\ee
where the behavior of  $K(\rho)$ as $\rho \rightarrow 0$ is of the form
\be
 K(\rho)\approx \left\{ 
 \begin{array}{l} 
 R/\rho, \hspace{3.35em} \textrm{ if } d=1,\\
 |\log \rho |/2, \hspace{1.5em} \textrm{ if } d=2.
  \end{array}
 \right.
\ee
This asymptotic behavior of $K(\rho)$ near $0$ can be proved  by  simply writing down the analytic expression of the solution to  \eqref{Eq_Poisson}; see \cite[Lemma 3.1]{Nao}.

When $d\geq 3$,  $K(\rho)$ converges to a  constant (depending on $d$) as $\rho \rightarrow 0.$
This removal of the singularity at $0$ for $K$ in dimension $d\geq 3$, implies that the perturbation constructed from the techniques of \cite{Nao} needs to be sufficiently large to generate a discontinuity in the minimal branch.  Whether this point is purely technical or more substantial, is still an open problem.

\appendix

\section{Unbounded continuum of solutions to parametrized fixed point problems, in Banach spaces}\label{Sec_AppendixA}
We communicate in this appendix on a general
result concerning the existence of an unbounded continuum of fixed
points associated with one-parameter families of completely continuous
perturbations of the identity map in a Banach space. This theorem is rooted in the seminal work of \cite{LS34} that initiated what is known today as the {\it Leray-Schauder continuation theorem}. 
Extensions of such a continuation result  can be found in \cite{FMP86,MP84} for the multi-parameter case.   Theorem \ref{THM_global_unbounded} below, formulates  such a result in the one-parameter case.  Its proof is provided here to make the expository as much self-contained as possible.  Under a nonzero condition on the Leray-Schauder  degree to hold at some parameter value, Theorem \ref{THM_global_unbounded} ensures in particular  the existence of an unbounded continuum of solutions to nonlinear eigenvalue problems for which the nonlinearity is not necessarily Fr\'echet differentiable.

Results similar to Theorem \ref{THM_global_unbounded} that deal with the existence of an unbounded continuum of solutions to nonlinear eigenvalue problems, have been obtained in the literature, see e.g.~
\cite[Theorem 3.2]{rab2}, ~\cite[Corollary 1.34]{Rab73_continua}, \cite[Theorem 3]{Brezis_Beres} or \cite[Theorem 17.1]{Amann}.  Similar to these works, the ingredients for proving Theorem \ref{THM_global_unbounded} rely also on the Leray-Schauder degree properties and {\mkk connectivity  arguments} from point set topology. However, by following the approach  of \cite{FMP86,MP84}, Theorem \ref{THM_global_unbounded} ensures the existence of an unbounded continuum of solutions to parameterized fixed point problems under more general conditions on the nonlinear term  than required in \cite{rab2,Rab73_continua,Brezis_Beres,Amann}.

Hereafter, given  a real Banach space $E$ and a map $\Psi:E\rightarrow E$, $\deg (\Psi, \mathcal{O},y)$ stands for the 
Leray-Schauder degree of $\Psi$ with respect to an open bounded subset $\mathcal{O}$ of $E$, and $y\in E.$ This degree is well defined for completely continuous perturbations $\Psi$ of
the identity map and  if $y\not\in \Psi( \partial
\mathcal{O})$;
see e.g. ~\cite[Chap.~2,Thm.~8.1]{Deimling}. In what follows the  $\lambda$-section of a nonempty subset $\mathcal{A}$ of $\mathbb{R}_{+}\times E $, is defined as:
\be
\mathcal{A}_{\lambda}:=\{u\in E\;:\; (\lambda,u)\in
\mathcal{A}\}.
\ee

\begin{theorem}\label{THM_global_unbounded}
Let $\mathcal{U}$ be an open bounded
subset of a real Banach space $E$ and assume that $G:
\mathbb{R}_{+}\times E \rightarrow E$ is completely continuous ({\it
i.e.} ~compact and continuous). We assume that there exists
$\lambda_0\geq 0$, such that the equation,
\begin{equation}\label{Eq_Nonlinear-eq}
u-G( \lambda_0,u)=0
\end{equation}
has a unique solution $u_0$, and,
\begin{equation}\label{Eq_deg-non-zero}
\deg (I-G(\lambda_0, \cdot), \mathcal{U}, 0)\neq 0.
\end{equation}

Let us introduce  
\be
\mathcal{S}^+=\{(\lambda,u)\in [\lambda_0,\infty)\times
E: u=G(\lambda,u)\}.
\ee

Then there exists a continuum
$\mathcal{C}^{+} \subseteq \mathcal{S}^{+}$ (i.e.~a closed and
connected subset of $\mathcal{S}^{+}$) such that the following
properties hold:
\begin{itemize}
  \item[(i)] $\mathcal{C}_{\lambda_0}^{+}\cap \mathcal{U}=\{u_0\}$, 
  \item [(ii)] Either $\mathcal{C}^{+}$ is unbounded or $\mathcal{C}_{\lambda_0}^{+}\cap
  (E\setminus \overline{\mathcal{U}}))\neq \emptyset.$
\end{itemize}
\end{theorem}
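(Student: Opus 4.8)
The plan is to establish Theorem~\ref{THM_global_unbounded} by the classical Leray--Schauder continuation argument, adapted so that no differentiability of $G$ is required---only complete continuity and the nonvanishing of the degree at $\lambda_0$. First I would fix, for each $R>0$ and each $T>\lambda_0$, the open bounded cylinder $\mathcal{O}_{R,T}:=\{(\lambda,u)\in(\lambda_0-1,T)\times E:\|u\|<R\}$ inside $\mathbb{R}\times E$, and regard $\Phi(\lambda,u):=u-G(\lambda,u)$ as a completely continuous perturbation of the identity on $\mathbb{R}\times E$ (the $\lambda$-variable being inert). By hypothesis \eqref{Eq_deg-non-zero} and excision, for all large $R$ the degree $\deg(\Phi(\lambda_0,\cdot),\{\|u\|<R\},0)\neq 0$, so the slice $\mathcal{S}^+_{\lambda_0}\cap\mathcal{U}=\{u_0\}$ carries nontrivial local degree. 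The goal is then to show that the connected component $\mathcal{C}^+$ of $\mathcal{S}^+$ containing $(\lambda_0,u_0)$ cannot be both bounded and contained in $[\lambda_0,\infty)\times\mathcal{U}$ with its $\lambda_0$-slice equal to just $\{u_0\}$.

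The heart of the argument is Whyburn's lemma (the ``either a continuum joins two sets or a separating compact set exists'' dichotomy from point-set topology; see \cite{Rab73_continua,Amann}). I would argue by contradiction: suppose the component $\mathcal{C}^+$ of $(\lambda_0,u_0)$ in $\mathcal{S}^+$ is bounded and does not meet $E\setminus\overline{\mathcal{U}}$ in its $\lambda_0$-slice. Since $G$ is compact, $\mathcal{S}^+$ is locally compact, and a bounded closed subset of it is compact; hence $\mathcal{C}^+$ is a compact component of the compact set $K:=\mathcal{S}^+\cap(\overline{B_{R}}\times[\lambda_0,T])$ for suitable large $R,T$. By a standard separation result for compact metric spaces, there is a relatively clopen subset $K_0$ of $K$ with $\mathcal{C}^+\subseteq K_0$ and $K_0$ disjoint from the ``exits'' of the cylinder---that is, $K_0$ stays away from $\{\|u\|=R\}$, from $\{\lambda=T\}$, and (using the second alternative's negation) from $(\{\lambda_0\}\times(E\setminus\overline{\mathcal{U}}))$. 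One then thickens $K_0$ to an open bounded set $\mathcal{W}\subset\mathbb{R}\times E$ whose boundary misses $\mathcal{S}^+$ entirely, so the degree $\deg(\Phi,\mathcal{W},0)$ is well defined.

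The contradiction comes from computing this degree in two ways via homotopy invariance in the $\lambda$-direction. On the one hand, the slice $\mathcal{W}_{\lambda_0}$ is an open neighborhood of $u_0$ on which $\Phi(\lambda_0,\cdot)$ has no zero on its boundary and, by excision from \eqref{Eq_deg-non-zero} together with the uniqueness of $u_0$ at $\lambda_0$, $\deg(\Phi(\lambda_0,\cdot),\mathcal{W}_{\lambda_0},0)\neq 0$; by the product/suspension formula for the Leray--Schauder degree of a map that is inert in one coordinate, this forces $\deg(\Phi,\mathcal{W},0)\neq 0$, hence $\Phi$ has a zero in $\mathcal{W}$ (which we already knew). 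On the other hand, for $\lambda$ slightly larger than $T$ the slice $\mathcal{W}_\lambda$ is empty (since $K_0$ avoided $\{\lambda=T\}$ means $\mathcal{W}$ sits below some $\lambda<T$), giving degree zero there; sliding $\lambda$ through $[\lambda_0,T]$ without ever meeting a zero of $\Phi$ on $\partial\mathcal{W}$ shows the two degrees agree, so $0\neq 0$. This rules out the bad case and yields alternative~(ii): either $\mathcal{C}^+$ is unbounded, or it must reach $E\setminus\overline{\mathcal{U}}$ already in its $\lambda_0$-slice. Property~(i) is then immediate from the uniqueness of the solution at $\lambda_0$ together with the construction of $\mathcal{C}^+$ as the component through $(\lambda_0,u_0)$.

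The main obstacle, and the step demanding the most care, is the passage from the topological separation (Whyburn's lemma applied inside the locally compact solution set) to a clean open set $\mathcal{W}$ whose boundary is disjoint from \emph{all} of $\mathcal{S}^+$ while still separating the designated component from the cylinder's exits; one must verify that the compactness supplied by $G$ is genuinely enough to do this, and that the excision/suspension manipulations of the Leray--Schauder degree are legitimate for a merely completely continuous (not differentiable) $G$. This is exactly where the formulation following \cite{FMP86,MP84} buys us generality over the Rabinowitz-type statements: no linearization at $(\lambda_0,u_0)$ is invoked, only the a priori degree condition \eqref{Eq_deg-non-zero}. I would write these topological steps out in full in Appendix~\ref{Sec_AppendixA} and keep the degree-theoretic bookkeeping explicit, since that is where a careless argument would fail.
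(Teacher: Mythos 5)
Your proposal is correct and follows essentially the same route as the paper: argue by contradiction, use complete continuity to get compactness of the bounded part of $\mathcal{S}^{+}$, apply the Whyburn-type separation lemma to isolate the component $\mathcal{C}^{+}$ inside an open set whose boundary avoids $\mathcal{S}^{+}$, and then derive a contradiction from the constancy in $\lambda$ of the Leray--Schauder degree over a cylinder with variable sections (nonzero at $\lambda_0$ by \eqref{Eq_deg-non-zero} and excision, zero at a slice with no solutions). The technical step you rightly flag as delicate---justifying homotopy invariance for variable $\lambda$-sections via the product formula---is exactly what the paper isolates as Lemma \ref{Lem_Homotopy_gene}, proved there with the Dugundji extension theorem.
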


To prove this theorem, we need an extension of  the
 standard homotopy property of the Leray-Schauder degree \cite[p.~56]{Deimling} to homotopy
cylinders that exhibit variable $\lambda$-sections.  This is the purpose of the following Lemma. 
\begin{Lem}\label{Lem_Homotopy_gene}
Let $\mathcal{O}$ be a bounded open subset of $[\lambda_1,\lambda_2]\times E$, and let $G:\overline{\mathcal{O}}\rightarrow
E$ be a completely continuous mapping. Assume that $u\neq
G(\lambda,u)$ on $\partial \mathcal{O},$ then for all $\lambda \in
[\lambda_1,\lambda_2]$,
$$\deg (I-G(\lambda,\cdot),\mathcal{O}_{\lambda},0_E) \mbox{ is independent of } \lambda,$$
where $\mathcal{O}_{\lambda}=\{u\in E\;:\; (\lambda,u)\in
\mathcal{O}\}$ is the $\lambda$-section of $\mathcal{O}$.
\end{Lem}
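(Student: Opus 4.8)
The plan is to prove Lemma \ref{Lem_Homotopy_gene} by reducing it to the standard homotopy invariance of the Leray--Schauder degree, the only subtlety being that the sections $\mathcal{O}_\lambda$ vary with $\lambda$ and may even be empty for some $\lambda$. First I would fix $\lambda_* \in [\lambda_1,\lambda_2]$ and show that $\lambda \mapsto \deg(I-G(\lambda,\cdot),\mathcal{O}_\lambda,0_E)$ is locally constant near $\lambda_*$; since $[\lambda_1,\lambda_2]$ is connected, local constancy on the whole interval yields the claim. (One should also note that if $\mathcal{O}_\lambda = \emptyset$ the degree is $0$ by convention, so the statement includes the assertion that the degree is $0$ whenever some — hence every — section is empty, unless the sections are nonempty throughout.)

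For the local argument, fix $\lambda_*$ and consider the open set $\mathcal{O}_{\lambda_*} \subset E$. The hypothesis $u \neq G(\lambda,u)$ on $\partial\mathcal{O}$ is key: I would first argue that, since $G$ is completely continuous on the compact-closure set $\overline{\mathcal{O}}$ and the "non-fixed-point" condition holds on the compact set $\partial\mathcal{O}$, there is a uniform separation $\inf_{(\lambda,u)\in\partial\mathcal{O}} \|u - G(\lambda,u)\| =: \delta > 0$. Then for $\lambda$ close to $\lambda_*$ I want to compare $\deg(I-G(\lambda,\cdot),\mathcal{O}_\lambda,0_E)$ with $\deg(I-G(\lambda,\cdot),\mathcal{O}_{\lambda_*},0_E)$, using the excision property of the degree: the point is that the symmetric difference of $\mathcal{O}_\lambda$ and $\mathcal{O}_{\lambda_*}$, for $\lambda$ near $\lambda_*$, is contained in a neighborhood of $\partial\mathcal{O}_{\lambda_*}$ where — by continuity of $G$ and the uniform bound $\delta$ — the equation $u=G(\lambda,u)$ has no solution; hence by excision the degree computed over $\mathcal{O}_\lambda$ equals that computed over $\mathcal{O}_{\lambda_*}$. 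Once both are over the \emph{fixed} domain $\mathcal{O}_{\lambda_*}$, the ordinary homotopy invariance of the Leray--Schauder degree applied to the homotopy $(\lambda,u)\mapsto G(\lambda,u)$ (which is completely continuous and fixed-point-free on $\partial\mathcal{O}_{\lambda_*}$ for $\lambda$ near $\lambda_*$) gives $\deg(I-G(\lambda,\cdot),\mathcal{O}_{\lambda_*},0_E) = \deg(I-G(\lambda_*,\cdot),\mathcal{O}_{\lambda_*},0_E)$, completing the local step.

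The main obstacle I anticipate is making the excision/deformation step fully rigorous when the sections $\mathcal{O}_\lambda$ change shape in an a priori arbitrary (merely "bounded open in the product") way: one must carefully verify that no fixed point of $G(\lambda,\cdot)$ lies in the "new" or "disappearing" portions $\mathcal{O}_\lambda \setminus \mathcal{O}_{\lambda_*}$ and $\mathcal{O}_{\lambda_*}\setminus \mathcal{O}_\lambda$. The clean way to handle this is to work directly with the product set: consider, for an interval $[\lambda_*-\eta,\lambda_*+\eta]$, the slab $\mathcal{O} \cap \bigl(([\lambda_*-\eta,\lambda_*+\eta])\times E\bigr)$ and observe that the fixed-point set of $G$ inside $\overline{\mathcal{O}}$ is a compact set disjoint from $\partial\mathcal{O}$, hence at positive distance from it; choosing $\eta$ small enough so that this compact fixed-point set's relevant portion projects into a single "nice" common subdomain, one reduces to a genuinely fixed domain. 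Alternatively — and perhaps more transparently — one can invoke a version of the generalized homotopy invariance for degree on open subsets of $[\lambda_1,\lambda_2]\times E$ directly (as in, e.g., \cite{Deimling}), from which the lemma is essentially immediate; but since the paper proves things "for the sake of completeness," I would spell out the excision argument rather than cite it. The remaining pieces — boundedness of $\mathcal{O}_\lambda$, so that the degree is defined; complete continuity of $G(\lambda,\cdot)$ as a restriction of $G$ — are routine.
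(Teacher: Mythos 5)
Your route is genuinely different from the paper's, and it is the more classical one. The paper does not localize in $\lambda$ at all: it extends $G$ to all of $\mathbb{R}\times E$ by Dugundji's theorem, forms the auxiliary map $H(\lambda,u)=(\lambda-\lambda^{*},\,u-\widetilde{G}(\lambda,u))$ on an enlarged open set $\mathcal{O}^{\epsilon}\subset\mathbb{R}\times E$, excises in the product space down to the Cartesian product $(\lambda_1-\epsilon,\lambda_2+\epsilon)\times\mathcal{O}_{\lambda^{*}}$, and then combines the product formula for the degree with the $\lambda^{*}$-independence of $\deg(H,\mathcal{O}^{\epsilon},0)$. Your plan --- local constancy of $\lambda\mapsto\deg(I-G(\lambda,\cdot),\mathcal{O}_{\lambda},0_E)$ via excision on sections plus ordinary homotopy invariance on a fixed domain, followed by connectedness of $[\lambda_1,\lambda_2]$ --- is essentially the textbook proof of generalized homotopy invariance and avoids the product formula entirely. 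Both are legitimate; the paper's version buys a self-contained reduction to two named tools (Dugundji, product formula), yours is more elementary but pushes the difficulty into the handling of the variable sections.

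Two points in your second paragraph need repair, and your third paragraph only half-repairs them. First, the claim that $\mathcal{O}_{\lambda}\,\triangle\,\mathcal{O}_{\lambda_*}$ is contained in a small neighborhood of $\partial\mathcal{O}_{\lambda_*}$ is false in general: the sections of a bounded open subset of the product can change abruptly (e.g.\ a slit can appear in the interior of a section as $\lambda$ crosses a value). What is true, and what you correctly fall back on later, is that $\Sigma=\{(\lambda,u)\in\overline{\mathcal{O}}:u=G(\lambda,u)\}$ is compact and at positive distance $\delta$ from $\partial\mathcal{O}$, so for $|\lambda-\lambda_*|<\delta$ every solution at level $\lambda$ lies in $\mathcal{O}_{\lambda}\cap\mathcal{O}_{\lambda_*}$. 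Second --- and this you do not address --- $G(\lambda,\cdot)$ is simply not defined on $\overline{\mathcal{O}_{\lambda_*}}$ for $\lambda\neq\lambda_*$, because $\{\lambda\}\times\overline{\mathcal{O}_{\lambda_*}}$ need not be contained in $\overline{\mathcal{O}}$ (think of a tube whose section translates with $\lambda$); so neither $\deg(I-G(\lambda,\cdot),\mathcal{O}_{\lambda_*},0)$ nor the homotopy $(\lambda,u)\mapsto G(\lambda,u)$ on $\overline{\mathcal{O}_{\lambda_*}}$ makes sense as written. This is exactly the point that forces the paper to invoke Dugundji. Within your framework the fix is to replace $\mathcal{O}_{\lambda_*}$ by the common subdomain $W_{\eta}=\bigcap_{|\mu-\lambda_*|\le\eta}\mathcal{O}_{\mu}$: it is open by the tube lemma, $G$ is defined on $[\lambda_*-\eta,\lambda_*+\eta]\times\overline{W_{\eta}}\subset\overline{\mathcal{O}}$, and for $\eta<\delta/2$ all solutions at levels $|\mu-\lambda_*|\le\eta$ lie in $W_{\eta}$ at positive distance from $\partial W_{\eta}$; excision onto $W_{\eta}$ followed by homotopy invariance over the fixed domain $W_{\eta}$ then gives local constancy, and your argument closes.
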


\begin{proof}
We may assume, without loss of generality, that $\mathcal{O}\neq
\emptyset$ and that $\lambda_1=\inf\{\lambda:\mathcal{O}_{\lambda}\neq
\emptyset\}$ and $\lambda_2=\sup\{\lambda:\mathcal{O}_{\lambda}\neq
\emptyset\}.$ Consider $\epsilon >0$ and the following superset of
$\mathcal{O}$ in $\mathbb{R}\times E,$
\be\label{EqO_eps}
\mathcal{O}^{\epsilon}:=\mathcal{O} \bigcup \Big((\lambda_1-\epsilon,\lambda_1)\times \mathcal{O}_{\lambda_1} \cup  (\lambda_2,\lambda_2+\epsilon)\times \mathcal{O}_{\lambda_2} \Big).
\ee
Then $\mathcal{O}^{\epsilon}$ is an open bounded subset of
$\mathbb{R}\times E.$  Since  $\overline{\mathcal{O}}$ is closed by
definition and $G$ is continuous, then according to the Dugundgi extension
theorem on metric spaces \cite[Thm.~6.1 p.~188]{Dugundji} (cf.~Lemma \ref{Lem_Dugun} below), $G$ can be
extended to $\mathbb{R}\times E$ as a continuous function that we
denote by $\widetilde{G}$.

Now consider,
$$\forall\; (\lambda,u)\in \mathbb{R}\times E,\; H(\lambda,u):=(\lambda -\lambda^{\ast}; u-\widetilde{G}(\lambda, u)), $$
with some arbitrary fixed $\lambda^{\ast}\in [\lambda_1,\lambda_2]$. Then
$H$ is a completely continuous perturbation of the
identity\footnote{This statement can be proved by relying on
the construction of the continuous extension used in the proof of
the Dugundgi theorem. For the sake of completeness, we sketch the 
proof of the latter  in Appendix \ref{Sec_AppendixB}; see Lemma \ref{Lem_Dugun}.} in $\mathbb{R}\times
E$. {\mkrevb In what follows, one denotes by $\widetilde{E}$  the set $\mathbb{R}\times E$.}

Since $H(\lambda,u)=0_{\widetilde{E}}$ if and only if
$\lambda=\lambda^{\ast}$ and $u=\widetilde{G}(\lambda,u),$ and since 
$\lambda^{\ast} \in[\lambda_1,\lambda_2]$ and $G(\lambda, u) \neq u$ on $\partial
\mathcal{O}$ by assumptions, we deduce that,
\begin{equation}\label{Eq_f-eps-nonzero}
\forall\; (\lambda,u)\in \partial \mathcal{O}^{\epsilon},\;
H(\lambda,u)\neq 0_{\widetilde{E}}.
\end{equation}
Therefore $\deg (H,
\mathcal{O}^{\epsilon},0_{\widetilde{E}})$ is well defined and
constant.

Let us consider  the following one-parameter
family $\{H_t\}_{t\in [0,1]}$ of perturbations of
$H$ defined by,
$$\forall\; (\lambda,u)\in \mathbb{R}\times E,\;H_t(\lambda,u):=(\lambda -\lambda^{\ast}; u-t \widetilde{G}(\lambda,u)-(1-t)\widetilde{G}
(\lambda^*,u)).$$
Then
\be
\Big(H_t(\lambda,u)=0\Big) \Leftrightarrow \Big(\lambda=\lambda^{\ast} \mbox{ and } u=\widetilde{G}(\lambda^*,u)\Big),
\ee
and from our assumptions, we conclude again that
$H_t(\lambda,u)\neq 0_{\widetilde{E}}$ for all $(\lambda,u)
\in
\partial \mathcal{O}^{\epsilon}$ and all $t\in [0,1]$.

By applying now the standard homotopy invariance principle
to the family $\{H_t\}_{t\in [0,1]}$ we have
\begin{equation}\label{Eq_homotopy-resI}
\deg(H_1, \mathcal{O}^{\epsilon},0_{\widetilde{E}})=\deg
(H, \mathcal{O}^{\epsilon},0_{\widetilde{E}})= \deg
(H_0,\mathcal{O}^{\epsilon},0_{\widetilde{E}}).
\end{equation}

Let $K$ be the closed subset of $\overline{\mathcal{O}^{\epsilon}}$
such  that $\mathcal{O}^{\epsilon}\backslash
K=(\lambda_1-\epsilon,\lambda_2+\epsilon)\times \mathcal{O}_{\lambda^{\ast}}.$ Then
 $0_{\widetilde{E}}$ does not belong to $H(\partial
\mathcal{O}^{\epsilon} \cup K)$ since the cancelation of $H$
is possible only on the $\lambda^{\ast}$-cross section, while $K$ does
not intercept this section by construction and
$0_{\widetilde{E}}\not \in H(\partial
\mathcal{O}^{\epsilon})$ from \eqref{Eq_f-eps-nonzero}. By applying
now the excision property of the Leray-Schauder degree \cite{Deimling,nir}
with such a $K$, we obtain,
\begin{equation}\label{Eq_var-to-cartesian}
\deg (H_0,\mathcal{O}^{\epsilon},0_{\widetilde{E}})=\deg
(H_0,(\lambda_1-\epsilon, \lambda_2+\epsilon)\times
\mathcal{O}_{\lambda^{\ast}}, 0_{\widetilde{E}}).
\end{equation}

The interest of \eqref{Eq_var-to-cartesian} relies on the fact that
the degree is by this way expressed on a cartesian product which
allows us to apply the cartesian product formula (see Lemma
\ref{Lem_carte-pdct}) and gives in our case
\begin{equation}\label{Eq_appli-carte-pdctI}
\deg (H_0,(\lambda_1-\epsilon, \lambda_2+\epsilon)\times
\mathcal{O}_{\lambda^{\ast}},0_{\widetilde{E}})=\deg(I-G(\lambda^{\ast},\cdot),
\mathcal{O}_{\lambda^{\ast}},0_E),\end{equation} since
$\deg(f,(\lambda_1-\epsilon, \lambda_2+\epsilon), 0_{\mathbb{R}})=1$ with
$f(\lambda)=\lambda -\lambda^*,$ and $\lambda^* \in [\lambda_1,\lambda_2]$.

By applying now \eqref{Eq_appli-carte-pdctI},
\eqref{Eq_var-to-cartesian} and \eqref{Eq_homotopy-resI} and by 
recalling that $\deg (H,
\mathcal{O}^{\epsilon},0_{\widetilde{E}})$ is independent of
$\lambda^*$, we have thus proved that for arbitrary $\lambda^* \in
[\lambda_1,\lambda_2],$ $\deg(I-G(\lambda^{\ast},\cdot),
\mathcal{O}_{\lambda^{\ast}},0_E)$ is also independent of
$\lambda^*.$ The proof is complete.
\end{proof}

\begin{Rmk}\label{Rem_epsilon}
The introduction of $\mathcal{O}^{\epsilon}$ such as defined in \eqref{EqO_eps}
above was used in order to work within an open bounded subset of a Banach space,
here $\mathbb{R}\times E$, and thus to work within the framework of
the Leray-Schauder degree\footnote{the original open subset $\mathcal{O}$ is not an open subset of a
Banach space, but of the (complete) metric space $[\lambda_1,\lambda_2]\times E$.}. The Dugundgi theorem is used to appropriately extend
the mapping $G$ to $\mathcal{O}^{\epsilon}$ in order to apply the Leray-Schauder
degree techniques.
\end{Rmk}

The last ingredient to prove Theorem \ref{THM_global_unbounded}, is the following 
separation lemma from point set topology (Lemma \ref{Lem_separation}
below). A {\it separation} of a topological space $X$ is a pair of
nonempty open subsets $U$ and $V$, such that $U\cap V=\emptyset$ and
$U\cup V=X$. A space is connected  if it does not admit a
separation. Two subsets $A$ and $B$ are connected in $X$ if the
exists a connected set $Y\subset X,$ such that $A\cap Y\neq
\emptyset$ and $B\cap Y \neq \emptyset$. Two nonempty subsets $A$
and $B$ of $X$ are {\it separated} if there exists a separation
$U,V$ of $X$ such that $A\subseteq U$ and $B\subseteq V.$ There
exists a relationship between these concepts in the case where $X$
compact, this is summarized in the following separation lemma.

\begin{Lem}\label{Lem_separation}
(Separation lemma) If $X$ is compact and $A$ and $B$ are not
separated, then $A$ and $B$ are connected in $X$.
\end{Lem}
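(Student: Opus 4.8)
The statement to prove is the separation lemma (Lemma \ref{Lem_separation}): in a compact space $X$, if two nonempty closed subsets $A$ and $B$ are not separated, then they are connected in $X$, i.e.\ there is a connected subset $Y \subseteq X$ meeting both $A$ and $B$. The standard approach, which I would follow, is a proof by contraposition combined with a compactness argument built on finite intersection of closed sets: I assume that $A$ and $B$ are \emph{not} connected in $X$ (no connected set meets both) and derive that they are separated (there is a clopen partition of $X$ isolating one from the other). The key device is to consider, for each point $a \in A$, the \emph{connected component} $C_a$ of $a$ in $X$; by hypothesis $C_a \cap B = \emptyset$. In a compact Hausdorff space the connected component of a point coincides with its quasi-component (the intersection of all clopen sets containing it), so $C_a$ equals the intersection of all clopen neighborhoods of $a$; here one should either assume $X$ is compact Hausdorff (the implicit setting, since $X$ will be a subset of $[\lambda_0,\infty)\times E$) or argue directly with quasi-components.

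First I would recall and use the fact that in a compact Hausdorff space the quasi-component of a point is connected and equals its component. Then, fixing $a\in A$, since the quasi-component $Q_a = \bigcap\{U : U \text{ clopen}, a\in U\}$ is disjoint from the closed set $B$, a finite-intersection / compactness argument produces finitely many clopen sets $U_1,\dots,U_k$ containing $a$ with $U_a := U_1\cap\cdots\cap U_k$ clopen and $U_a\cap B=\emptyset$. Thus every $a\in A$ has a clopen neighborhood disjoint from $B$. Next, $A$ is compact (closed in compact $X$), so the open cover $\{U_a\}_{a\in A}$ has a finite subcover $U_{a_1},\dots,U_{a_m}$; set $U := U_{a_1}\cup\cdots\cup U_{a_m}$. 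Then $U$ is clopen, $A\subseteq U$, and $U\cap B=\emptyset$. Taking $V := X\setminus U$, which is clopen and contains $B$, gives a separation $(U,V)$ of $X$ with $A\subseteq U$, $B\subseteq V$ — exactly what ``separated'' means. This contradicts the hypothesis, completing the contrapositive.

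\textbf{Main obstacle.} The crux is the passage from ``no connected set joins $A$ to $B$'' to ``$A$ has a clopen neighborhood missing $B$,'' which rests entirely on the component = quasi-component identification in compact (Hausdorff) spaces; without Hausdorffness one must work with quasi-components throughout and verify their connectedness, and one must handle carefully that the quasi-component $Q_a$ being disjoint from the \emph{compact} set $B$ lets one extract a \emph{single} clopen set separating them via the finite intersection property (the family of closed sets $\{U\cap B : U \ni a \text{ clopen}\}$ has empty intersection, hence a finite subfamily does). I would present this finite-intersection step explicitly, since it is the only nonformal point; the two finite-subcover reductions (over the clopen sets at $a$, and over the compact set $A$) are routine. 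I would also note at the outset that $X$ being compact is essential and that $A,B$ closed (hence compact) is what makes both finite reductions legitimate.
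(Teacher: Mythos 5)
Your proof is correct. Note that the paper does not actually supply a proof of this lemma; it simply refers the reader to \cite[Lemma 29.1]{Deimling} (see also \cite{Kuratowski}), and the argument you give --- contraposition, the identification of components with quasi-components in a compact Hausdorff space, the finite-intersection extraction of a single clopen set $U_a\ni a$ disjoint from $B$, and the finite subcover over the compact set $A$ --- is precisely the classical proof found in that source. You are also right to flag the implicit hypotheses (Hausdorffness, which holds since $X$ sits in the metric space $[\lambda_0,\infty)\times E$, and closedness of $A$ and $B$, which is what the cited statement assumes and what holds in the application); nothing further is needed.
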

The proof of this  lemma may be found in \cite[Lemma 29.1]{Deimling}; see also
\cite{Kuratowski}.

 As a result if two subsets of
a compact set are not connected, they are separated. We are now in
position to prove Theorem \ref{THM_global_unbounded}.

\vspace{1ex}
\medskip

\noindent{\bf Proof of Theorem \ref{THM_global_unbounded}.}
\vspace{-1ex}
\begin{proof}
Let $\mathcal{C}^{+}$ be the maximal connected subset of $\mathcal{S}^{+}$
such that (i) holds, which is trivial by assumptions. We proceed by
contradiction. Assume that $\mathcal{C}^{+}_{\lambda_0}\cap(E\setminus
\overline{\mathcal{U}})= \emptyset $ and that $\mathcal{C}^{+}$ is bounded in
$[\lambda_0,\infty)\times E$. Then there exists a constant $R>0$
such that for each $(\lambda,u)\in \mathcal{C}^{+}$ we have $\|u\|+|\lambda|
<R$. Introduce,
$$\mathcal{S}^{+}_{2R}:=\{(\lambda,u)\in \mathcal{S}^+\;:\;\|u\|+|\lambda| \leq 2R \}. $$
From the complete continuity of $G$ it follows that any set of the
form $\mathcal{H}:=\{(\lambda,u)\in \Lambda\times E\;:\;
u=G(\lambda,u)\},$ with $\Lambda$ a closed and bounded subset of
$[\lambda_0, \infty),$ is a compact subset of $[\lambda_0, \infty)
\times E$. As a result, $\mathcal{S}^{+}_{2R}$ is a compact subset
of $[\lambda_0, \infty) \times E$.

There are two possibilities. Either (a) $\mathcal{S}^{+}_{2R}=\mathcal{C}^{+}$
or, (b) there exists $(\lambda^*,u^*)\in \mathcal{S}^{+}_{2R}$ such
that $(\lambda^*,u^*)$  does not belong to $\mathcal{C}^+$.

Let $\mathcal{U}$ be as defined in Theorem \ref{THM_global_unbounded}.
Consider case (b) first. We want to apply Lemma \ref{Lem_separation}
with $X=\mathcal{S}^{+}_{2R},$ $A=\mathcal{C}^+,$ and $B= \{\lambda^*\}\times
\mathcal{S}_{2R}^+$. Obviously, $A$ and $B$ are not connected in
$\mathcal{S}^{+}_{2R}$ since $(\lambda^*,u^*)\not\in \mathcal{C}^+$ and $\mathcal{C}^+$
is the maximal connected subset of $\mathcal{S}^+$. We may therefore
apply Lemma \ref{Lem_separation} in such a case and build an open
subset $\mathcal{O}$ of $[\lambda_0, \infty) \times E$, such that
the following properties hold,
\begin{itemize}
\item[(c$_1$)] $\mathcal{O}_{\lambda_0}=\mathcal{U}$ (since
$\mathcal{C}^{+}_{\lambda_0}\cap(E\setminus \overline{\mathcal{U}})= \emptyset
$),
\item[(c$_2$)]$\mathcal{C}^{+} \subset \mathcal{O}$,
\item[(c$_3$)]$\mathcal{S}^{+}_{2R}\cap
\partial \mathcal{O}=\emptyset$ and,
 \item[(c$_4$)]
 $\mathcal{O}_{\lambda^*}$
contains no solutions of $u=G(\lambda^*,u)$.
\end{itemize}
The last property comes from the fact that $A$ and $B$, as defined
above, are separated.

From (c$_3$), we get by applying Lemma \ref{Lem_Homotopy_gene},
that,
\begin{equation}\label{Eq_Homotopy-conseq-II}
\forall\; \lambda \in \Lambda_R,\; \deg(I-G(\lambda,\cdot),
\mathcal{O}_{\lambda},0)=\deg(I-G(\lambda_0,\cdot),
\mathcal{O}_{\lambda_0},0),
\end{equation}
where $\Lambda_R$ denotes the projection of $\mathcal{S}^{+}_{2R}$
onto $[\lambda_0,\infty)$.

Now $\deg(I-G(\lambda_0,\cdot), \mathcal{O}_{\lambda_0},0)\neq 0$ by
(c$_1$) and the assumptions of Theorem \ref{THM_global_unbounded}.  We
obtain therefore a contradiction from (c$_4$) when
\eqref{Eq_Homotopy-conseq-II} is applied for $\lambda=\lambda^*$ .

The case $\mathcal{C}^+=\mathcal{S}^{+}_{2R},$  may be treated along the same
lines and is left to the reader. The proof is complete.
\end{proof}

\begin{Rmk}\label{Rem-unique-unbounded}
Theorem \ref{THM_global_unbounded} shows in particular that if for all $\mathcal{U}$ there is
  a unique solution $(\lambda_0,u_0)$ in $\mathcal{U}$, of
$u=G(\lambda_0,u)$, then there exists an
{\it unbounded} continuum  of solutions of $u=G(\lambda,u),$
provided that there exists an open set $\mathcal{V}$ in $E$ such
that $\deg(I-G(\lambda_0, \cdot),\mathcal{V},0)\neq 0$.
\end{Rmk}

\begin{Rmk}
It is not essential that $u_0$ be the only solution of
\eqref{Eq_Nonlinear-eq} in $\mathcal{U}$.  If one only assumes
\eqref{Eq_deg-non-zero}, one obtains  the
existence of finitely many continua satisfying the
alternative formulated in (ii) of Theorem \ref{THM_global_unbounded}.
\end{Rmk}

\section{Product formula for the Leray-Schauder degree, and the Dugundji extension theorem}\label{Sec_AppendixB}
This appendix contains auxiliaries lemmas used in the previous Appendix. We first start with the cartesian product formula for the
Leray-Schauder degree.

\begin{lemma}\label{Lem_carte-pdct}
Assume that $\mathcal{U}=\mathcal{U}_1 \times \mathcal{U}_2$ is a bounded open
subset of $E_1\times E_2$, where $E_1$ and $E_2$ are two real Banach
spaces with $\mathcal{U}_1$ and $\mathcal{U}_2$ open subsets of $E_1$ and
$E_2$ respectively. Suppose that for all $x=(x_1,x_2) \in E$,
$f(x)=(f_1(x_1),f_2(x_2)),$ where
$f_1:\overline{\mathcal{U}_1}\rightarrow E_1$ and
$f_2:\overline{\mathcal{U}_2}\rightarrow E_2$ are continuous and suppose
that $y=(y_1,y_2) \in E$ is such that $y_1$ (resp. $y_2$) does not
belong to $f_1(\partial \mathcal{U}_1)$ (resp.  $f_2(\partial \mathcal{U}_2)$).
Then,
$$ \deg(f,\mathcal{U},y)=\deg(f_1,\mathcal{U}_1,y_1)\deg(f_2,\mathcal{U}_2,y_2).$$
\end{lemma}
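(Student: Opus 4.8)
The plan is to reduce the identity to the finite-dimensional setting, where the Cartesian product formula for the Brouwer degree is classical, and then transport the equality back through the Schauder approximations that define the Leray--Schauder degree. Since $\deg(f_1,\cdot,\cdot)$ and $\deg(f_2,\cdot,\cdot)$ are understood as Leray--Schauder degrees, one is implicitly assuming $f_i=I_{E_i}-g_i$ with $g_i:\overline{\mathcal{U}_i}\to E_i$ compact; then $f=I_{E_1\times E_2}-(g_1,g_2)$ is a completely continuous perturbation of the identity on $E_1\times E_2$.

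First I would check admissibility and record the relevant distances. One has $\partial\mathcal{U}=(\partial\mathcal{U}_1\times\overline{\mathcal{U}_2})\cup(\overline{\mathcal{U}_1}\times\partial\mathcal{U}_2)$, and on each of these two pieces $f(x)=(f_1(x_1),f_2(x_2))\neq(y_1,y_2)$ by the hypotheses on $y_1$ and $y_2$; hence $y\notin f(\partial\mathcal{U})$ and $\deg(f,\mathcal{U},y)$ is well defined. Moreover $f_i(\partial\mathcal{U}_i)$ is closed (it is the image of the closed bounded set $\partial\mathcal{U}_i$ under a compact perturbation of the identity), so $\delta_i:=\dist(y_i,f_i(\partial\mathcal{U}_i))>0$; equipping $E_1\times E_2$ with the sup of the two norms, the splitting of $\partial\mathcal{U}$ just noted gives $\dist(y,f(\partial\mathcal{U}))\geq\delta:=\min(\delta_1,\delta_2)>0$.

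Next I would invoke the Schauder approximation lemma to choose finite-dimensional subspaces $F_i\subset E_i$ with $y_i\in F_i$ and compact maps $\tilde g_i:\overline{\mathcal{U}_i}\to F_i$ with $\sup_{\overline{\mathcal{U}_i}}\|\tilde g_i-g_i\|<\delta$. Putting $\tilde g:=(\tilde g_1,\tilde g_2):\overline{\mathcal{U}}\to F_1\times F_2$, this single map is $\delta$-close to $(g_1,g_2)$ uniformly on $\overline{\mathcal{U}}$, hence admissible for computing $\deg(f,\mathcal{U},y)$. By the definition of the Leray--Schauder degree together with its reduction property (which identifies the Leray--Schauder degree on a finite-dimensional subspace with the Brouwer degree), $\deg(f,\mathcal{U},y)$ equals the Brouwer degree of $(I-\tilde g)$ on the finite-dimensional set $(F_1\times F_2)\cap\mathcal{U}=(F_1\cap\mathcal{U}_1)\times(F_2\cap\mathcal{U}_2)$ at $y$, and likewise $\deg(f_i,\mathcal{U}_i,y_i)$ equals the Brouwer degree of $(I-\tilde g_i)$ on $F_i\cap\mathcal{U}_i$ at $y_i$. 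Since $(I-\tilde g)$ has the product form $(x_1,x_2)\mapsto((I-\tilde g_1)(x_1),(I-\tilde g_2)(x_2))$, I would then apply the classical Cartesian product formula for the Brouwer degree --- which one can justify by summing signs of block-diagonal Jacobians at a common regular value, or directly from the product and homotopy axioms --- to obtain
\[
\deg(f,\mathcal{U},y)=\deg(f_1,\mathcal{U}_1,y_1)\,\deg(f_2,\mathcal{U}_2,y_2).
\]

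The main obstacle is essentially bookkeeping rather than a genuine difficulty: one must coordinate the two independent Schauder approximations so that the product map $(\tilde g_1,\tilde g_2)$ is simultaneously a legitimate finite-dimensional approximation of $(g_1,g_2)$ on $\overline{\mathcal{U}}$ --- which is exactly what the uniform threshold $\delta=\min(\delta_1,\delta_2)$ and the sup-norm on $E_1\times E_2$ arrange --- and one must keep track that the value of each degree does not depend on the chosen admissible approximation nor on the chosen finite-dimensional subspace. This last point is where the argument actually uses the well-definedness of the Leray--Schauder degree; everything else is routine.
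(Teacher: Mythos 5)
Your proof is correct. Note that the paper does not actually prove this lemma: it is recorded in Appendix~\ref{Sec_AppendixB} as a classical fact about the Leray--Schauder degree (only the Dugundji extension theorem receives a sketch of proof there), so there is no argument in the text to compare against. Your route --- decomposing $\partial\mathcal{U}=(\partial\mathcal{U}_1\times\overline{\mathcal{U}_2})\cup(\overline{\mathcal{U}_1}\times\partial\mathcal{U}_2)$ to obtain admissibility of $y$ and a uniform gap $\delta=\min(\delta_1,\delta_2)$ in the sup norm, choosing Schauder approximations $\tilde g_i$ within $\delta$ so that $(\tilde g_1,\tilde g_2)$ is simultaneously admissible for the product map, reducing to the Brouwer degree on $(F_1\cap\mathcal{U}_1)\times(F_2\cap\mathcal{U}_2)$, and applying the finite-dimensional product formula for block-diagonal maps --- is the standard derivation, and the verifications you flag (closedness of $f_i(\partial\mathcal{U}_i)$ via compactness of $g_i$, hence $\delta_i>0$; the identification $(F_1\times F_2)\cap\mathcal{U}=(F_1\cap\mathcal{U}_1)\times(F_2\cap\mathcal{U}_2)$; independence of the degree from the chosen approximation and subspace) are exactly the right ones and all go through.
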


We recall below the {\it Dugundgi extension theorem} \cite[Thm.~6.1 p.~188]{Dugundji}.
\begin{lemma}\label{Lem_Dugun}
({\bf Dugundgi}) Let $E$ and $X$ be Banach spaces and let $f:\mathfrak{C}\rightarrow E$ a
continuous mapping, where $\mathfrak{C}$ is a closed subset of $E$.
Then there exists a continuous mapping $\widetilde{f}:E\rightarrow
K$ such that $\widetilde{f}(u)=f(u)$ for all $u\in \mathfrak{C}$.
\end{lemma}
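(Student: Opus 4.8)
The plan is to prove the Dugundji extension theorem by the classical partition-of-unity construction on the open complement $E \setminus \mathfrak{C}$. First I would set up the covering: since $\mathfrak{C}$ is closed, $E \setminus \mathfrak{C}$ is open, and for each point $x \in E \setminus \mathfrak{C}$ I would form the open ball $B(x, r_x)$ with radius $r_x = \tfrac{1}{2}\dist(x,\mathfrak{C})$. This collection is an open cover of the metric space $E \setminus \mathfrak{C}$. Because every metric space is paracompact, I would extract a locally finite open refinement $\{V_i\}_{i \in J}$ of this cover, and subordinate to it a continuous partition of unity $\{\varphi_i\}_{i \in J}$, so that $\sum_i \varphi_i(x) = 1$ for each $x \in E \setminus \mathfrak{C}$, each $\varphi_i$ is continuous with support in $V_i$, and only finitely many $\varphi_i$ are nonzero near any given point.

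Next I would choose, for each index $i$, a ``nearest-anchor'' point $a_i \in \mathfrak{C}$ as follows. Since $V_i$ sits inside some ball $B(x_i, r_{x_i})$, I would pick $a_i \in \mathfrak{C}$ satisfying $\|x_i - a_i\| \le 2\,\dist(x_i,\mathfrak{C})$ (an approximate minimizer, which exists by definition of the infimum). Then I would define the extension by
\begin{equation}\label{Eq_Dugundji-def}
\widetilde{f}(u) =
\begin{cases}
f(u), & u \in \mathfrak{C},\\[1mm]
\ds\sum_{i \in J} \varphi_i(u)\, f(a_i), & u \in E \setminus \mathfrak{C}.
\end{cases}
\end{equation}
Because $K$ is convex and each $f(a_i) \in K$, every value on $E\setminus\mathfrak{C}$ is a (locally finite) convex combination of points of $K$, hence lands in $K$; this is exactly where convexity of $K$ is used and is what forces the target to be $K$ rather than all of $X$. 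Continuity of $\widetilde{f}$ on the open set $E \setminus \mathfrak{C}$ is immediate from local finiteness of the partition of unity and continuity of each $\varphi_i$.

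The main obstacle, and the only genuinely delicate point, is continuity of $\widetilde{f}$ at the boundary points $u_0 \in \mathfrak{C}$, where the two cases of \eqref{Eq_Dugundji-def} must match up. The key estimate is geometric: if $\varphi_i(u) \neq 0$ for some $u$ near $u_0$, then $u \in V_i \subset B(x_i, r_{x_i})$, so one controls $\|a_i - u_0\|$ by a constant multiple of $\|u - u_0\|$. Concretely, combining $\|u - x_i\| < r_{x_i} = \tfrac12\dist(x_i,\mathfrak{C})$ with $\|x_i - a_i\| \le 2\,\dist(x_i,\mathfrak{C})$ and the triangle inequality, one shows that whenever $u$ is close to $\mathfrak{C}$ the active anchors $a_i$ are all within a bounded factor of $\dist(u,\mathfrak{C}) \le \|u - u_0\|$ from $u$, and hence from $u_0$. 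I would make this precise to conclude that $\|a_i - u_0\| \le C\|u - u_0\|$ for every index $i$ with $\varphi_i(u) \neq 0$ and $u$ sufficiently near $u_0$.

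Finally I would close the argument using continuity of the original map $f$ on $\mathfrak{C}$. Given $\eta > 0$, choose $\delta$ so that $\|f(a) - f(u_0)\| < \eta$ for $a \in \mathfrak{C}$ with $\|a - u_0\| < \delta$; the estimate above guarantees all active anchors satisfy this once $u$ is close enough to $u_0$. Then, using $\sum_i \varphi_i(u) = 1$,
\begin{equation}\label{Eq_Dugundji-est}
\|\widetilde{f}(u) - \widetilde{f}(u_0)\|
= \Big\| \sum_{i} \varphi_i(u)\big(f(a_i) - f(u_0)\big) \Big\|
\le \sum_{i} \varphi_i(u)\, \|f(a_i) - f(u_0)\| < \eta,
\end{equation}
which establishes continuity at $u_0$ and completes the proof. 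I expect the bookkeeping in the boundary estimate to be the part requiring the most care, since one must track the interplay between the refinement $\{V_i\}$, the centers $x_i$, and the anchors $a_i$; everything else follows from standard paracompactness of metric spaces, existence of subordinate partitions of unity, and convexity of $K$.
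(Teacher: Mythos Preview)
Your proof is correct and follows essentially the same classical partition-of-unity construction as the paper's sketch: cover $E\setminus\mathfrak{C}$ by balls of radius proportional to $\dist(\cdot,\mathfrak{C})$, pass to a locally finite refinement, build a subordinate partition of unity, and define the extension as a convex combination of values $f(a_i)$ at nearby anchor points in $\mathfrak{C}$. The only cosmetic differences are that the paper uses radius $\tfrac{1}{3}\dist(u,\mathfrak{C})$ instead of your $\tfrac{1}{2}$, and constructs the partition of unity explicitly via distance functions rather than invoking paracompactness abstractly; your treatment of the boundary continuity is in fact more detailed than what the paper provides, since the paper simply asserts that continuity ``can be shown.''
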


\begin{proof}(Sketch)
For each $u\in E\backslash \mathfrak{C},$ let $r_u=\frac{1}{3}\mbox{dist}(u,\mathfrak{C}),
$ and $B_u:=\{v\in E\; : \; \|v-u\|<r_u\}$. Then
$\mbox{diam}(B_u)\leq \mbox{dist}(B_u, \mathfrak{C}),$ and $\{B_u\}_{u\in
E\backslash \mathfrak{C}}$ is a open cover of $E\backslash \mathfrak{C}$ which admits a
local refinement $\{\mathcal{O}_{\lambda}\}_{\lambda \in \Lambda}$:
i.e.~$\underset{\lambda\in \Lambda}\bigcup
\mathcal{O}_{\lambda} \supset E\backslash \mathfrak{C}$, for each $\lambda \in
\Lambda$ there exists $B_u$ such that $B_u\supset \mathcal{O}_{\lambda}$,
and every $u\in E\backslash  \mathfrak{C}$ has a neighborhood $U$ such that  ~$U$
intersects at most finitely many elements of
$\{\mathcal{O}_{\lambda}\}_{\lambda\in \Lambda}$ (locally finite
family).

Introduce now $\gamma: E\backslash \mathfrak{C} \rightarrow \mathbb{R}^+_*$, defined
by $\gamma(u)=\underset{\lambda\in \Lambda}\sum
\mbox{dist}(u,\overline{\mathcal{O}_{\lambda}})$ and introduce
$$\forall \lambda \in \Lambda,\; \forall u\in E\backslash \mathfrak{C},\;\; \gamma_{\lambda}(u)=\frac{\mbox{dist}(u,\overline{\mathcal{O}_{\lambda}})}{\gamma(u)}.$$
By construction, the above sum over $\Lambda$ contains only finitely
many terms and thus $\gamma$ is continuous.

Now define $\widetilde{f}$ by,

\begin{equation}\label{LGeq}
\widetilde{f}=\left\{
\begin{array}{l}
f(u), \mbox{ if } u\in \mathfrak{C},\\
\sum_{\lambda \in \Lambda} \gamma_{\lambda}(u) f(u_{\lambda}), \;
u\not\in \mathfrak{C}.
\end{array}\right.
\end{equation}
Then it can be shown that $\widetilde{f}$ is continuous.

\end{proof}

\section*{Acknowledgments}
The author is grateful to Thierry Cazenave for  the stimulating discussions concerning the reference \cite{Nao} at the start of this project.  The author thanks also Lionel Roques, Jean Roux  and Eric Simonnet for their interests in this work, and Honghu Liu for his help in preparing Figure 1.  This work was partially supported 
by the grant N00014-16-1-2073 from the Multidisciplinary University Research Initiative (MURI) of the Office of Naval Research, and by the National Science Foundation  grants OCE-1658357 and DMS-1616981.

\bibliographystyle{amsalpha}

\begin{thebibliography}{99}
\bibitem[AM87]{abraham1987}
R.~Abraham and J.E. Marsden, \emph{Foundations of mechanics},
Addison-Wesley Publishing Company, Inc., 1987.

\bibitem[Ama76]{Amann}
H. Amann, Fixed point equations and nonlinear eigenvalue
problems in ordered Banach spaces, {\it SIAM Review}, {\bf 18} (1976), 
620-709.

\bibitem[AP37]{Andornov_Ponrtyagin} A.A. Andronov, and L.S. Pontryagin, Syst\`emes grossiers, {\it Dokl. Akad. Nauk.
SSSR}, {\bf 14} (5), 1937, 247-250.


\bibitem[Arn81]{arnold1981singularity}
V.~I. Arnol'd, \emph{Singularity theory}, vol.~53, Cambridge University Press,
  1981.


\bibitem[Arn83]{Arnold_Geom}
V.~I. Arnol'd, \emph{{Geometrical Methods in the Theory of Ordinary Differential
  Equations}}, Grundlehren der Mathematischen Wissenschaften, vol. 250,
  Springer-Verlag, New York, 1983, Translated from the Russian by Joseph
  Sz{\"u}cs, Translation edited by Mark Levi.


\bibitem[BE89]{Bebernes_Eberly} J. Bebernes, and D. Eberly, \textit{Mathematical Problems
From Combustion Theory}, Springer-Verlag, 1989.


\bibitem[Ben10]{Ben_Gal} N. Ben-Gal, {\it Grow-Up Solutions and Heteroclinics to Infinity for Scalar
Parabolic PDEs}, Ph.D. Thesis, Division of Applied Mathematics,
Brown University, 2010.


\bibitem[BCT88a]{BCT88} M. Berger, P. Church, and J. Timourian, Folds and cusps in Banach spaces, with applications to nonlinear partial differential equations. I, {\it Indiana Univ. Math. J.} {\bf 34}  (1985) (1), 1Ð19.

\bibitem[BCT88b]{BCT88_b} \bysame, 
Folds and cusps in Banach spaces, with applications to nonlinear partial differential
equations. II, {\it Trans. Amer. Math. Soc.} {\bf 307} (1988), 225--244.

\bibitem[BB80]{Brezis_Beres} H. Brezis and H. Berestycki, On a free boundary problem arising in plasma physics, {\it Nonlinear Analysis} {\bf 4} (1980),  415--436. 

\bibitem[BO86]{Brezis_Oswald} H. Brezis and L. Oswald, Remarks on sublinear elliptic
equations, {\it Nonlinear Analysis: Theory, Methods and Applications}, {\bf
10} (1), 55--64, 1986.


\bibitem[BCMR96]{Brezis_al'96} H. Brezis, T. Cazenave, Y. Martel, and A. Ramiandrisoa, Blow up
for $u_t - \Delta u = g(u)$ revisited, {\it Adv. Differential Equations}
{\bf 1} (1996), 73-90.


\bibitem[BV97]{Brezis_Vazquez} H. ~Brezis, and J. L. V\'{a}zquez, Blow-up solutions of some
nonlinear elliptic problems. Rev. Mat. Univ. Compl. Madrid, {\bf
10}, 1997, 443-469.




\bibitem[BF82]{brezzi1982numerical}
F.~Brezzi and H.~Fujii, \emph{Numerical imperfections and perturbations in the
  approximation of nonlinear problems}, 1982.

\bibitem[BRR80]{brezzi1980}
F.~Brezzi, J.~Rappaz, and P.A. Raviart, \emph{Finite dimensional approximation
  of nonlinear problems}, Numerische Mathematik \textbf{36} (1980), no.~1,
  1--25.

\bibitem[BIS81]{BIS} K. J. Brown, M. M. A. Ibrahim, and R. Shivaji, S-shaped bifurcation curves,
{\it Nonlinear Anal.: Theory, Methods, and Applications}, {\bf 5} (1981), 475--486.

\bibitem[BP97]{Brunovsky_Polacik} P. Brunovsk\'y and P. Pol\'a$\check{\mbox{c}}$ik, The Morse-Smale
structure of a generic reaction-diffusion equation in higher space
dimension, Journal of Differential Equation, {\bf 135} (1997),
129--181.

\bibitem[CS84]{Castro_Shivaji} A. Castro and R. Shivaji, Uniqueness of positive solutions for a class of elliptic
boundary value problems, {\it Proc. Royal Soc. Edinburgh Sect. A}
{\bf 98} (1984), 267--269.


\bibitem[Caz06]{Nao_book} T. Cazenave, {\it An Introduction to Semilinear Elliptic Equations},
Editora do Instituto de Matem{\'a}tica, Universidade Federal do Rio de Janeiro, 2006.  


\bibitem[CEP02]{Nao} T. Cazenave, M. Escobedo, and A. Pozio. Some stability properties for minimal
solutions of $-\Delta u = \lambda g(u)$, {\it Portugaliae
Mathematica}, {\bf 59} (2002), 373--391.


\bibitem[CH98]{Caz_Haraux_book}
T.~Cazenave and A.~Haraux, \emph{{An Introduction to Semilinear Evolution
  Equations}}, Oxford Lecture Series in Mathematics and its Applications,
  vol.~13, The Clarendon Press Oxford University Press, New York, 1998,
  Translated from the 1990 French original by Yvan Martel and revised by the
  authors.

\bibitem[Cha57]{ch} S. C. Chandrasekhar, \textit{An Introduction to the
Study of Stellar Structure}, Dover Publ., N. Y., 1957.






\bibitem[CGVR06]{CGVR06}
\newblock M. D. Chekroun, M. Ghil, J. Roux and F. Varadi,
\newblock \emph{Averaging of time-periodic systems without a small parameter},
\newblock {\it Disc. and Cont. Dyn. Syst. A}, {\bf 14} (2006), 753--782.

\bibitem[CR13]{CR13} M.~D. Chekroun, and J. Roux, Homeomorphism groups of normed vector space: The conjugacy
problem and the Koopman operator, {\it Disc. and Cont. Dyn. Syst. A}, {\bf 33} (9) (2013), 3957--3980.


\bibitem[CPT16]{CPT16} M.~D. Chekroun, E. Park, and R.Temam, The Stampacchia maximum principle for stochastic partial differential equations and applications, {\it Journal of Differential Equations},  {\bf 260} (3) (2016), 2926--2972.

\bibitem[CKL17]{CKL17} M.~D. Chekroun, A.~Kroener, and H.~Liu,  
Galerkin approximations of nonlinear optimal control problems in Hilbert spaces, {\it Electron. J. Differential Equations}, {\bf 2017} (189) (2017), 1--40. 

\bibitem[CHMP75]{Chow_al75} S.-N. Chow, J.~K. Hale and J. Mallet-Paret,  Applications of generic bifurcation, I, {\it Arch. Rat. Mech.
Anal.} {\bf 59} (1975), 159--188.

\bibitem[CH82]{CH82}
S.-N. Chow, and J.~K. Hale, \emph{Methods of {B}ifurcation {T}heory},
  Grundlehren der Mathematischen Wissenschaften, vol. 251, Springer-Verlag, New
  York/Berlin, 1982.

\bibitem[CT97]{Church_Timourian} P. T. Church and J. G. Timourian, Global structure for nonlinear
operators in differential and integral equations. I. Folds; II.
Cusps: Topological nonlinear analysis, in ``Progr. Nonlinear
Differential Equations Appl.," {\bf 27}, pp. 109--160, 161--245,
Birkhauser, Boston, 1997.





\bibitem[CR73]{Crandall_Rabinowitz'73} M. G. Crandall, and P.H. Rabinowitz, Bifurcation, perturbation of
simple eigenvalues and linearized stability, {\it Arch. Rational Mech.
Anal.}, {\bf 52} (1973), 161-180.


\bibitem[CR75]{Crandall_Rabinowitz'75} M. G. Crandall, and P. H. Rabinowitz, Some continuation and variational methods for
positive solutions of nonlinear elliptic eigenvalue problems, {\it Arch. Rational  Mech. Anal.} {\bf 58}
(1975), 207--218.


\bibitem[DPF90]{damil1990new}
N.~Damil, and M.~Potier-Ferry, A new method to compute perturbed
  bifurcations: application to the buckling of imperfect elastic structures,
  {\it International Journal of Engineering Science} \textbf{28} (1990), no.~9,
  943--957.


\bibitem[Dan88]{Dancer_domain_effect} E. N. Dancer, The effect of domain shape on the number of
positive solutions of certain nonlinear equations, {\it J. Diff.
Eqns}, {\bf 74} (1988), 120--156.

\bibitem[Dan08]{Dancer_domain_effect2} E. N. Dancer, Domain perturbation for linear and semi-linear boundary value problems,
{\it Handbook of Differential Equations: Stationary Partial Differential Equations}, Vol 6, (M. Chipot, Editor), Elsevier, 2008. 


\bibitem[Dei85]{Deimling} K. Deimling, {\it Nonlinear Functional Analysis}, Springer, Berlin,
1985.

\bibitem[Du00]{Du} Y. Du,  Exact multiplicity and S-shaped bifurcation curve for some
semilinear elliptic problems from combustion theory, {\it SIAM J.
Math. Analysis}, {\bf 32} (2000), 707--733.

\bibitem[DL01]{Du_Lou} Y. Du, and Y. Lou, Proof of a conjecture for the perturbed Gelfand
equation from combustion theory, {\it J. Differential Equations},
{\bf  173} (2001),  213--230.


\bibitem[Dug66]{Dugundji} J. Dugundji, {\it Topology}, Allyn and Bacon, Boston, 1966.

\bibitem[FR99]{FiedlerRocha99} B. Fiedler, and C. Rocha, Orbit equivalence of global attractors of semilinear parabolic differential equations, {\it Transactions of the American Mathematical Society}, {\bf 352} (1) (1999), 257--284.

\bibitem[FP99]{fila1999global}
M.~Fila, and P.~Pol{\'a}{\v{c}}ik, Global solutions of a semilinear
  parabolic equation, {\it Advances in Differential Equations} \textbf{4} (2) (1999), 163--196.
  
  
  

\bibitem[Fil05]{Fil05} M.~Fila, Blow-up of solutions of supercritical parabolic equations, 105-158, in {\it Handbook of Differential Equations: Evolutionary Equations}, Vol. 2, Amsterdam: Elsevier, 2005.
  
  
\bibitem[FMP86]{FMP86} P. M. Fitzpatrick, I. Massabo and J. Pejsachowicz, On the covering dimension of
the set of solutions of some nonlinear equations, {\it Trans. Amer. Math. Soc.} {\bf 296} (1986),
777-798. 
%
  
\bibitem[F-K69]{F} D. A. Frank-Kamenetskii, \textit{Diffusion and Heat
Transfert in Chemical Kinematics}, $2^{nd}$--edition, Plenum Press,
1969.  
  
\bibitem[Fuj69]{Fuj69}H. Fujita, On the nonlinear equations $\Delta u +e^u=0$ and $\frac{\partial v}{\partial t}=\Delta v + e^v$, {\it Bull. Amer. Math. Soc.} {\bf 75} (1969), 132-135.

\bibitem[Gel63]{Gelfand_63} I.M. Gel'fand, Some problems in the theory of
quasilinear equations, {\it Amer. Math. Soc.  Transl.} {\bf 29} (1963),
295-381.





\bibitem[GT98]{Gilbarg_Trudinger} D. Gilbarg and N.S. Trudinger, {\it Elliptic Partial Differential Equations of Second
Order}, Springer, reprint of the 2nd ed. Berlin-Heidelberg-New York (1983), Corr. 3rd printing, 1998.



\bibitem[GS79]{golubitsky1979theory}
M.~Golubitsky and D~Schaeffer, A theory for imperfect bifurcation via
  singularity theory, {\it Communications on Pure and Applied Mathematics}
  \textbf{32} (1979), no.~1, 21--98.

\bibitem[GS85]{golubitsky1985singularities}
\bysame, \emph{Singularities and Groups in Bifurcation Theory,} Vol. I,
  Applied Mathematical Sciences vol. 51, Springer, 1985.


\bibitem[Hal88]{Hale}
{\au J.K.~Hale}, {\bk Asymptotic Behaviour of Dissipative systems},
\eds{Amer.\ Math.\ Soc.}{Providence, RI}{1988}


\bibitem[HMO02]{Hale_al'84} J.K. Hale, L.T. Magalh\~aes and W.M. Oliva, {\it Dynamics in Infinite Dimensions}, 2nd Ed. 
Springer-Verlag, 2002.

\bibitem[HI11]{HI11}
M.~Haragus and G.~Iooss, \emph{{Local Bifurcations, Center Manifolds, and
  Normal Forms in Infinite-Dimensional Dynamical Systems}}, Universitext,
  Springer-Verlag, London, 2011.

\bibitem[Hen81]{Hen81}
D.~Henry, \emph{Geometric {T}heory of {S}emilinear {P}arabolic {E}quations},
  Lecture Notes in Mathematics, vol. 840, Springer-Verlag, Berlin, 1981.
  
  
\bibitem[Hen05]{Hen05} D. Henry, {\it Perturbation of the Boundary in Boundary-value Problems of Partial Differential
Equations}, London Mathematical Society Lecture Note Series, vol. 318, Cambridge
University Press, Cambridge, 2005. 
  
\bibitem[Hir76]{TOPOhirch} M.W. Hirsch, {\it Differential Topology}, Graduate Texts in Mathematics, {\bf 33}, Springer--Verlag, 1976.





\bibitem[JL73]{Joseph_Lund} D.D. Joseph, T.S. Lundgren, Quasilinear Dirichlet
problems driven by positive sources, {\it Arch. Rational Mech. Anal.} ~{\bf 49} (1973),
241--269.

\bibitem[KH97]{TOPOkat} A. Katok and B. Hasselblatt, {\it Introduction to the Modern Theory of Dynamical
Systems}, With a supplement by Anatole Katok and Leonardo Mendoza,
Encyclopedia of Mathematics and Its Applications {\bf 54}, Cambridge
University Press, 1997.


\bibitem[KK73]{keener1973perturbed}
J.P. Keener and H.B. Keller, Perturbed bifurcation theory, {\it Archive for
  rational mechanics and analysis} \textbf{50} (1973), no.~3, 159--175.


\bibitem[Kie12]{kielhofer2012}
H.~Kielh{\"o}fer, \emph{Bifurcation Theory: An Introduction with Applications
  to Partial Differential Equations}, {\bf 156}, Springer, 2012.




\bibitem[KL99]{Korman_Li} Korman, P. and Li, Y. On the exactness of an S-shaped bifurcation
curve, {\it Proc. Amer. Math. Soc.}, {\bf 127} (1999) 1011Ð1020.

\bibitem[Kur68]{Kuratowski} K. Kuratowski, {\it Topology}, vol. 2, Academic
Press, New York, 1968.

\bibitem[Kuz04]{Kutz_book} Yu.A. Kuznetsov, {\it Elements of Applied Bifurcation Theory},
Springer, 3rd edition, 2004.

\bibitem[LS34]{LS34} J. Leray, and J. Schauder, Topologie et \'equations fonctionnelles, {\it Ann. Sci. \'Ecole
Norm. Sup.} {\bf 51} (3) (1934), 45--78; Russian transl. in Uspekhi Mat. Nauk 1 (1946), No.
3Ð4, 71-95.

\bibitem[Lio82]{Lions} P.L. Lions, On the existence of positive solutions of semilinear
elliptic equations, {\it SIAM. Rev}, {\bf 24} (1982), 441--467.

\bibitem[Lu94]{Lu} K. Lu, Structural stability for scalar parabolic equations, {\it Journal of
Differential Equations}, {\bf 114} (1994), 253--271.

\bibitem[LLMP05]{Lunardi_Lecture-notes} L. Lorenzi, A. Lunardi, G. Metafune, and D. Pallara, {\it Analytic Semigroups
and Reaction-Diffusion Problems}, 8th International Internet Seminar on Evolution Equations, (2005).


\bibitem[Lun95]{Lunardi_book}  A. Lunardi, {\it Analytic Semigroups and Optimal Regularity in
Parabolic Problems}, Birkh\"auser, Basel, 1995.



\bibitem[MWZ02]{Ma_Wang_Zhong}
Q.~Ma, S.~Wang, and C.~Zhong,  Necessary and sufficient conditions for the
  existence of global attractors for semigroups and applications, {\it Indiana
  Univ. Math. J.}, \textbf{51} (6) (2002), 1541--1559.
  
  
  \bibitem[MW05]{MW05}
T.~Ma and S.~Wang, \emph{Bifurcation {T}heory and {A}pplications}, World
  Scientific Series on Nonlinear Science. Series A: Monographs and Treatises,
  vol.~53, World Scientific Publishing Co. Pte. Ltd., Hackensack, NJ, 2005.

\bibitem[MW14]{MW14}
\bysame, \emph{Phase {T}ransition {D}ynamics}, Springer-Verlag, 2014.

\bibitem[MP84]{MP84}  I. Massabo and J. Pejsachowicz, On the connectivity properties of the solution set
of parametrized families of compact vector fields, {\it J. Funct. Anal.} {\bf 59} (1984), 151Ð166.


\bibitem[MM76]{marsden1976hopf}
J.E. Marsden and M.~McCracken, \emph{The Hopf Bifurcation and Its
  Applications}, vol.~19, Springer-Verlag, 1976.


\bibitem[Mat82]{Matano_lap-number} H. Matano, Nonincrease of the lap-number of a solution for a
one-dimensional semilinear parabolic equation, {\it Journal of the
Faculty of Science: University of Tokyo: Section IA}, {\bf 29},
401--441, 1982.

\bibitem[Min04]{Minamoto} T. Minamoto, Numerical method with guaranteed
accuracy of a double turning point for a radially symmetric solution
of the perturbed Gelfand equation, {\it J. of Comput. and App.
Math.}, {\bf 169} (2004) 151--160.

\bibitem[MN07]{minamoto2007numerical}
T.~Minamoto and M.T. Nakao, Numerical method for verifying the existence
  and local uniqueness of a double turning point for a radially symmetric
  solution of the perturbed gelfand equation, {\it Journal of computational and
  applied mathematics} \textbf{202} (2007), no.~2, 177--185.

\bibitem[MS80]{moore1980}
G.~Moore and A.~Spence, The calculation of turning points of nonlinear
  equations, {\it SIAM Journal on Numerical Analysis} \textbf{17} (1980), no.~4,
  567--576.

\bibitem[NS94]{Nagasaki} K. Nagasaki, and T. Suzuki,  Spectral and related
properties about the Emden-Fowler equation $-\Delta u = \lambda e^u$ on circular domains, {\it Math. Ann.}  {\bf 299} (1994), 1--15.

\bibitem[New11]{newhouse}
S.~E. Newhouse, \emph{Lectures on Dynamical Systems}, Springer, 2011.

\bibitem[Nir01]{nir} L. Nirenberg, {\it Topics in Nonlinear Functional Analysis},
 Courant Lecture Notes, AMS/CLN {\bf 6}, 2001.

\bibitem[OS99]{OS99} T. Ouyang, and J. Shi, Exact multiplicity of positive solutions for a class of semilinear problem, II, {\it J. Differential Equations} {\bf 158} (1999), 94-151.


\bibitem[Paz83]{PAZ}
{\au A.~Pazy}, {\bk Semigroups of Linear Operators and Applications
to Partial Differential Equations}, \eds{Springer-Verlag}{New
York}{1983}


\bibitem[QS07]{quittner2007superlinear}
P. Quittner, and P. Souplet,  {\it Superlinear Parabolic Problems:
  Blow-up, Global existence and Steady states}, Springer, 2007.




\bibitem[Rab71]{rab2}  P. H. Rabinowitz, Some global results for nonlinear eigenvalue problems,
{\it J. Funct. Anal.}, {\bf 7},  (1971), 487--513.

\bibitem[Rab73]{rab1} P. H. Rabinowitz,  Some aspects of nonlinear eigenvalue problems, {\it Rocky Mountain J. Math.}, {\bf 3} (2) (1973), 161-202.

\bibitem[Rab74]{Rab73_continua}
P.~H. Rabinowitz,  Pairs of positive solutions of nonlinear elliptic partial
  differential equations, {\it Indiana Univ. Math. J.} \textbf{23} (1974),
  173--186.

\bibitem[Rob01]{Robinson1}{\au J. Robinson}, {\bk Infinite-Dimensional Dynamical Dystems},
\eds{Cambridge Texts in Applied Mathematics, Cambridge University
Press}{Cambridge}{2001}

\bibitem[RCCS14]{Roques_al14} L. Roques, M.D. Chekroun, M. Cristofol, S. Soubeyrand, and M. Ghil, Parameter estimation
for energy balance models with memory,
{\it Proc. Roy. Soc. A}
{\bf 470} (2014), 20140349. (\url{doi:10.1098/rspa.2014.0349})

\bibitem[RC07]{RC07} L. Roques, and M.D. Chekroun, On population resilience to external perturbations,  {\it SIAM Journal on Applied Mathematics}, {\bf 68} (1) (2007), 133--153.


\bibitem[Sat73]{Sattinger_book} D. H. Sattinger, {\it Topics in Stability and Bifurcation Theory},
Springer, 1973.


\bibitem[Sat80]{sattinger1980bifurcation}
D. H. Sattinger,  Bifurcation and symmetry breaking in applied
  mathematics, {\it Bulletin of the American Mathematical Society} \textbf{3}
  (1980), no.~2, 779--819.

\bibitem[SY02]{SellYou} {\au G. Sell,  and Y. You}, {\bk Dynamics of Evolutionary Equations}, Springer-Verlag, New York, 2002.

\bibitem[She80]{She80} M. Shearer, One-parameter perturbations of bifurcation from a simple eigenvalue, {\it Math. Proc. Cambridge Philos.
Soc.}  {\bf 88} (1980) (1), 111Ð123.


\bibitem[SW82]{spence1982non}
A.~Spence, and B.~Werner,  Non-simple turning points and cusps, {\it IMA
  Journal of Numerical analysis}, \textbf{2} (1982), no.~4, 413--427.

\bibitem[Sma67]{TOPOSmale} 
\newblock S. Smale,
\newblock Differentiable dynamical systems,
\newblock {\it Bull. Amer. Math. Soc.}, {\bf 73} (1967), 747--817.


 \bibitem[Tai95]{Taira_book} K. Taira, {\bk Analytic Semigroups and Semilinear Initial Boundary Value
Problems}, \eds{London Mathematical Society Lecture Note
Series}{Cambridge University Press}{\bf 223},{ 1995}


\bibitem[Tai98]{Taira} K. Taira,  Bifurcation theory for semilinear elliptic boundary
value problems, {\it Hiroshima Math. J.}, {\bf 28} (1998), 261--308.



\bibitem[Tem75]{Tem75} R. Temam, A nonlinear eigenvalue problem: the shape of equilibrium of a confined plasma,
{\it Arch. Rational Mech. Anal.} {\bf 60}, 51--73 (1975).  

\bibitem[Tem97]{Temam3}
R.~Temam, \emph{{I}nfinite-{D}imensional {D}ynamical {S}ystems in {M}echanics
  and {P}hysics}, 2nd ed., Applied Mathematical Sciences, vol.~68,
  Springer-Verlag, New York, 1997.

\bibitem[ZWS07]{Zhao_al} Y. Zhao, Y. Wang and J. Shi, Exact multiplicity of solutions and S-shaped bifurcation curve
for a class of semi-linear elliptic equations from a chemical
reaction model, {\it J. Math. Anal. Appl.}, {\bf 331} (2007),
263--278.


\end{thebibliography}

\end{document}